\theoremstyle{plain}
\newtheorem*{theorem*}{Theorem}
\newtheorem{theorem}{Theorem}[section]
\newtheorem{lemma}[theorem]{Lemma}
\newtheorem{corollary}[theorem]{Corollary}
\theoremstyle{definition}
\newtheorem {proposition}{Proposition}
\renewcommand{\Re}{{\rm Re}\,}
\renewcommand{\Im}{{\rm Im}\,}
\newcommand{\R}{\mathbb{R}}
\newcommand{\C}{\mathbb{C}}
\newcommand{\Z}{\mathbb{Z}}
\renewcommand{\P}{\mathbb{P}}
\newcommand{\CP}{\C\P}
\newcommand{\Grass}{\mbox{\sffamily Gr}}
\newcommand{\Rm}{\mathcal{R}}
\newcommand{\Hm}{\mathcal{H}}
\newcommand{\Pm}{\mathcal{P}}
\newcommand{\Sm}{\mathcal{S}}
\newcommand{\Frame}{\mathcal{F}}
\newcommand{\B}{\mathcal{B}}
\newcommand{\wn}{n}
\DeclareMathOperator{\Gr}{Gr}
\DeclareMathOperator{\greatest}{gcd}
\DeclareMathOperator{\dimension}{dim}
\DeclareMathOperator{\sign}{sign}
\DeclareMathOperator{\Jac}{Jac}
\begin{document}
\title
[Tori amongst CMC planes in $\R^3$]
{The Prevalence of Tori amongst Constant Mean Curvature Planes in $ \R^3 $ }

\author[E. Carberry]{Emma Carberry}
\email{emma.carberry@sydney.edu.au}
\address{School of Mathematics and Statistics\\University of Sydney\\Australia}
\author[M. Schmidt]{Martin Ulrich Schmidt}
\email{schmidt@math.uni-mannheim.de}
\address{Mathematics Chair III\\
Universit\"at Mannheim\\
D-68131 Mannheim, Germany}
\begin{abstract}
 Constant mean curvature (CMC) tori in Euclidean 3-space are described by an algebraic curve, called the spectral curve, together with a line bundle on this curve and a point on $ S ^ 1 $, called the Sym point. For a given spectral curve the possible choices of line bundle and Sym point are easily described. The space of spectral curves of tori  is totally disconnected. Hence to characterise the ``moduli space" of CMC tori one should, for each genus $ g $, determine the closure $\overline {\Pm^ g} $ of spectral curves of CMC tori within the spectral curves of CMC planes having spectral genus $g$. We identify a real subvariety $\Rm^g $ and a subset $\Sm^ g\subseteq \Rm^g $ such that $\Rm^ g_{\text {max }}\subseteq\overline {\Pm^ g}\subseteq\Sm^ g $, where $\Rm^ g_{\text {max }} $ denotes the points of $\Rm^g$ having maximal dimension.  The lowest spectral genus for which tori exist is $g=2$ and in this case $\Rm^2=\Rm^2_{\text{max}}=\overline{\Pm^2}=\Sm^2$. For $ g >2 $, we conjecture that $\Rm^g\supsetneq\Rm^g_{\text{max}}=\Sm^g$. We give a number of alternative characterisations of $\Rm^ g_{\text {max}} $ and in particular introduce a new integer invariant of a CMC plane of finite type, called its winding number.
\end{abstract}

\date{\today}

\maketitle

\section{Introduction}
 Amongst constant mean curvature (CMC) immersions of the plane into $\R ^ 3 $, those of {\it finite-type} play a special role. They may be described in purely algebraic-geometric terms; there is \cite {PS:89, Hitchin:90, Bobenko:91} an explicit bijection between CMC planes of finite type and spectral data, consisting of a Sym point $\lambda_0\in S^1$, a hyperelliptic curve $ X $ with an anti-holomorphic involution $\rho $, a pair of marked points $x_0, x_\infty $ which are exchanged by $\rho $ and a line bundle on $ X $ of degree $ g +1 $, quaternionic with respect to $\rho $. Here $ g $ denotes the arithmetic genus of the spectral curve $ X $. To specify an immersion we must first provide the data of the spectral curve and a choice of Sym point and then the corresponding isospectral set of CMC immersions is parameterised by a real slice  of the Jacobian $\Jac(X) $ of $ X $.

A fundamental result is that all CMC immersions of genus one surfaces into $\R ^ 3 $ are of finite type \cite {PS:89, Hitchin:90}. The double-periodicity of the immersion is encoded in periodicity conditions for the spectral curve $ X $, with the line bundle again being freely chosen from a real subspace of $\Jac(X) $. These periodicity conditions are transcendental and it is not obvious even that tori of arbitrary spectral genus exist. In fact $ g=0,1$ can be easily excluded but for higher spectral genus, existence was established in  \cite {EKT:93, Jaggy:94}.

The space of spectral curves of CMC tori in Euclidean 3-space is totally disconnected.
Hence we cannot directly construct the ``moduli space" of CMC tori if we wish this space to have some reasonable structure, such as that of a complex analytic space. Instead the natural goal is to determine the closure of spectral curves of CMC tori within the space $ \Hm^ g $ of spectral curves of CMC immersions of finite type. We identify a subset $ \Sm^ g $ of the space of spectral curves of finite type which contains this closure and give a number of equivalent sufficient conditions for a point of $\Sm^ g $ to lie in the closure. We conjecture that these conditions are indeed satisfied by all points of $\Sm^ g $. Proving this appears to require the development of a stronger deformation theory and we hope to return to this in future work.

In stark contrast to the Euclidean situation, spectral curves of CMC tori in the 3-sphere admit a real one-parameter space of deformations, corresponding to changing the mean curvature of the immersion. In \cite {CS:12}, we showed that in the spherical case the closure of the set of spectral curves of CMC tori is in fact the entire set of spectral curves of CMC planes of finite type. The Euclidean case is both more difficult and more interesting.
%

We begin, in section 2, by summarising the spectral curve correspondence and introducing the aforementioned subsets $\Pm ^ g$, $\Sm ^ g$, $\Rm ^ g $ of the space $\Hm^ g $ of CMC planes of finite type, which play a central role in this paper. In section~\ref{section:decomposition} we associate to any CMC plane of finite type a meromorphic function $ f:\P ^ 1\rightarrow \P ^ 1 $, defined up to M\"obius transformations, whose degree and winding number prove to be useful invariants for understanding the moduli space $\Hm ^ g $ and the tori within it. Theorem~\ref {lemma:equivalent} shows that for positive spectral genus, the degree of $ f $ is strictly larger both than one and than the winding number. Theorem~\ref{theorem:decomposition} uses the winding number to give a decomposition of $\Hm ^ g\setminus\Sm ^ g $ into finitely many open, nonempty and disjoint subsets. Whitham deformations and the role they play in this paper are explained in section~\ref{sec:whitham}. Section~\ref{sec:closure} contains the statement and proof of our main result, Theorem~\ref{theorem:main}. We conclude in section~\ref{sec:grassmannian} by analysing subsets of a Grassmannian analogous to, but simpler than, the spaces $\Pm^g$, $\Sm^g$ and $\Rm^g$.

\section{Spectral curves of constant mean curvature tori in $ \R^3 $ }
We shall make use of the description of constant mean curvature (CMC) immersions of genus one surfaces in $ \R^3 $ in terms of spectral curve data \cite{Hitchin:90, PS:89, Bobenko:91}. Each such immersion corresponds to a quintuple $ (X,\lambda,\rho, \lambda_0, L) $ where $ X $ is an algebraic curve, called the {\it spectral curve}, with a degree two meromorphic function $ \lambda $,  anti-holomorphic involution $ \rho $, and a line bundle $ L $ on this curve, which is quaternionic with respect to $ \sigma\rho $, where $ \sigma $ is the hyperelliptic involution induced by $ \lambda $. 
The function $ \lambda $ is branched  at $ x_0 =\lambda^{- 1} (0) $ and $ x_\infty =\lambda^{- 1} (\infty ) $ whilst the anti-holomorphic involution $ \rho $ covers $ \lambda\mapsto\bar\lambda^{- 1} $ and has all points with $ |\lambda|=1 $ as fixed points. The remaining data $ \lambda_0 $ is a point on the unit circle or equivalently a pair of marked points $ x_1, x_2\in X $ such that $ \lambda (x_1) =\lambda (x_2) =\lambda_0 $. Writing $ P^d $ for the set of polynomials of degree at most $ d $, we define $ \rho^\ast: P^d\rightarrow P^d $ by
\[
\rho^\ast(p (\lambda)) =\bar\lambda^d{p (\bar\lambda^{- 1})},
\]
and polynomials satisfying $ \overline {\rho^\ast p} = p $ are said to be real with respect to $ \rho $. The reality condition is equivalent to the requirement that for 
\[
p (\lambda) = x_1+ p_1\lambda + \cdots + p_d\lambda^d 
\]
we have $ p_{d-i} =\overline {p_i} $. The set of polynomials of degree at most $ d $ which are real with respect to $ \rho $ is denoted by $ P_\R^d $.

If $ (X,\lambda,\rho,\lambda_0, L) $ is the spectral data of a CMC torus then the quadruple $ (X,\lambda,\rho,\lambda_0) $ satisfies periodicity conditions. If these conditions are not satisfied, one still obtains from $ (X,\lambda,\rho, L) $ a constant mean curvature immersion of the plane into $ \R^3 $. Not all constant mean curvature immersions of the plane correspond to spectral curve data as above; those which do are said to be of {\it finite type}. To consider all constant mean curvature immersions of the plane one would need to study analytic one-dimensional varieties, it is a key result \cite{Hitchin:90, PS:89} however that all doubly-periodic such immersions correspond to {\it algebraic} curve data, that is are of finite type. Given a hyperelliptic curve with real structure and $ \lambda_0\in S^1 $, the line bundle may be chosen from a real $ g $-dimensional family, where $ g $ denotes the arithmetic genus of $ X $ and as such we see that constant mean curvature immersions of the plane of finite type come in families whose dimension is given by the spectral genus. We restrict our attention to smooth spectral curves, and we may describe such $ X = X_a $ in $ \C^2 $ by an equation of the form
\[
y^2 =\lambda a (\lambda) = (-1)^g\lambda\prod_{j = 1}^g\frac{\bar\eta_j}{\left|\eta_j\right|} (\lambda -\eta_j) (\lambda -\bar\eta_j^{- 1}),
\]
where $ a $ belongs to the space $\Hm^g\subset P^{2g}$ of polynomials of degree $ 2g $ satisfying
\begin{itemize}
\item the reality condition $ \rho^\ast a =\bar a $, 
\item $ \lambda^{- g} a (\lambda) >0 $ for all $ \lambda\in S^1 $,
\item the highest coefficient of $ a $ has absolute value $ 1 $ and
\item the roots of $ a $ are pairwise distinct, forcing $ X_a $ to be smooth.
\end{itemize}
The roots $ \eta_1,\ldots,\eta_g $ in the punctured open unit disc $ B(0,1)\setminus\{0\}\subseteq\C $ determine a unique $ a\in\Hm^g $ and hence we view $ \Hm^g $ as an open subspace of $ \C^g $, with the induced topology.

The periodicity conditions may then be phrased in terms of a pair of meromorphic differentials on the curve $ X_a $. For each $ a\in\Hm^g $, let $ \B_a $ denote the real 2-dimensional space of polynomials $ b $ of degree $ g +1 $ satisfying $b\in P^{g+1}_\R$ and such that the meromorphic differential
\[
\Theta_b := \frac{b (\lambda) d\lambda}{\lambda y}
\]
has purely imaginary periods. Each element $ b $ of $ \B_a $ is uniquely determined by $ b(0) $ up to adding a holomorphic differential, which is fixed by the condition on the periods. Therefore these elements are in one-to-one corresondence of numbers $ b(0)\in\C $.

Each family of constant mean curvature immersions of a genus one surface in $ \R^3 $ corresponds  to a pair $ (a,\lambda_0)\in\Hm^g\times S^1 $, unique up to rotations, such that there exist linearly independent $ b_1, b_2\in\B_a $ and functions $ \mu_1,\mu_2 $ on $ X_a $ satisfying
\begin{itemize}
\item The multivalued functions $ \log\mu_1 $, $ \log\mu_2 $ are holomorphic away from $ x_0 =\lambda^{-1}\{0\} $ and $ x_\infty =\lambda^{- 1}\{\infty\} $, where they have simple poles and linearly independent residues,
\item $ \Theta_{b_1} = d\log\mu_1,\quad \Theta_{b_2} = d\log\mu_2 $,
\item $ \mu_1 (\lambda_0) =\mu_2 (\lambda_0) =\pm 1 $ 
and
\item $ b_1 (\lambda_0) = 0 = b_2 (\lambda_0) $.
\end{itemize}

The point $ \lambda_0\in S^1 $ is called the Sym point.
For each $ \lambda_0\in S^1 $ the set
\[
\Pm_{\lambda_0}^g=\{a\in\Hm^g\mid X_a \text { is the spectral curve of a CMC torus in }\R^3\}
\]
is contained in the subset 
\[
\Sm^g_{\lambda_0} =\{a\in\Hm^g |\text { all } b\in \B_a \text { satisfy } b (\lambda_0) = 0 \}.
\]
We may consider the 2-dimensional real vector spaces $ \B_a $ as forming a sub-bundle $ \B^g $ of the trivial bundle $ \Hm^g\times P_\R^{g +1} $ over $ \Hm^g $. The frame bundle $ \Frame^g $ of $ \B^g $ has as elements triples $ (a, b_1, b_2) $ as above and we may consider the real subvariety of $ \Frame^g $ given by the vanishing of $ b_1 (\lambda_0) $ and $ b_2 (\lambda_0) $. 
This subvariety consists of entire fibres of the projection $ \Frame^g\rightarrow\Hm^g $ and its image under this projection is  the set $ \Sm^g_{\lambda_0} $. Hence we see that $ \Sm^g_{\lambda_0} $ is a real subvariety of $ \Hm^g $.


Using the natural action of the rotation $ \lambda\mapsto e^{i\phi}\lambda $ on the spectral curve $ X_a $ we may instead consider the spaces
\[
\Sm^g =\bigcup_{\lambda_0\in S^1}\Sm^g_{\lambda_0}
\quad\text { and }\quad
\Pm^g =\bigcup_{\lambda_0\in S^1}\Pm^g_{\lambda_0}.
\]
The set $ \Sm^g $ parametrising spectral curves such that $ \greatest (\B_a) $ has zeroes on the unit circle has the disadvantage that for $ g >2 $, it may not be a subvariety.
Thus it will frequently be more convenient to consider the subvariety
\[
\Rm^g =\{a\in\Hm^g\mid\text { all } b\in\B_a\text { have a common root}\}.
\]
The lift of $ \Rm^g $ to the frame bundle $ \Frame^g $ consists of entire fibres and exactly coincides with the vanishing set of the resultant of $ b_1, b_2 $, hence $ \Rm^g $ is indeed a subvariety. Writing $ \greatest (\B_a) $ for the greatest common divisor of $ b\in\B_a $, we may equally well characterise $ \Rm^g $ as the $ a\in\Hm^g $ satisfying $ \deg (\greatest (\B_a))\geq 1 $.
\section{Decomposition of the moduli space $ \Hm^g\setminus\Sm^g $ }
\label{section:decomposition}
We shall in this section introduce an integer invariant of $ a\in\Hm^g $, the {\it winding number}, and show that for $ g >0 $, the  sets $ V_j $ on which the winding number takes  fixed values yield  a decomposition of $ \Hm^g\setminus\Sm^g $ into $ g $ open and non-empty sets. To do this, for linearly independent $ b_1, b_2\in\B_a $ consider the function 
\[
 f =\frac{b_1}{b_2}  :\mathbb{P}^1\rightarrow\mathbb{P}^1. 
\]
This function $ f $ is defined by $ a $ up to M\"obius transformations, and hence in particular its degree $ \deg (f) $ is well-defined, independent of the choice of $ b_1, b_2 $. We wish to consider also a ``real" analogue $ \tilde f $ of $ f $, and for this purpose let $ b_0\in\B_a\otimes \C $ be a polynomial with the property that
\[
\Theta_{b_0} =\frac{b_0 (\lambda) d\lambda}{\lambda y}
\]
is holomorphic on $ X_a\setminus\{x_\infty \} $ and has a pole of order two at $ x_\infty $ with no residue. This is equivalent to requiring that $ b_0 = \alpha b_1+\beta b_2 $ for linearly independent $ b_1, b_2\in\B_a $ and $ \alpha,\beta\in\C $ determined such that $ b_0 (0) = 0 $ whilst the highest (i.e. $ \lambda^{(g +1)} $ ) coefficient of $ b_0 $ is non-zero. We make the normalisation that highest coefficient of $ b_0 $ is $-2 i $ and then 
\[
\alpha = \frac{2 i b_2 (0)}
 {b_1 (0)\overline {b_2 (0)} - b_2 (0)\overline {b_1 (0)}},
\quad\beta = \frac{-2 i b_1 (0)}{b_1 (0)\overline {b_2 (0)} - b_2 (0)\overline {b_1 (0)}}.
\]
We know that $ b\in\B_a $ is uniquely determined by $ b (0) $ and then for linearly independent $ b_1, b_2\in\B_a $ we have that the denominators of the above expressions, and the expressions themselves, are both nonzero. Defining $ b_\infty =\overline {\rho^\ast b_0} $ with $\rho^\ast$ acting on $P^{g+1}\ni b_0,b_\infty$, the differential $ \Theta_{b_\infty} $ is holomorphic on $ X_a\setminus\{x_0\} $ and has a pole of order two at $ x_0 $ with no residue.

We set
 $ 
 \tilde f = \frac{b_0}{b_\infty }
 $ 
and observe that $ \tilde f $ maps the unit circle to itself and so restricts to a map $ \tilde f : S^1\rightarrow S^1 $. We write $ \wn(\tilde f) $ for $ \deg \tilde f : S^1\rightarrow S^1 $, which we term the {\em winding number} of $ f $. 

Take  the unique $ b_1, b_2\in\B_a $ such that $ b_1 (0) = 1 $ and $ b_2 (0) = i $. With this normalisation we have $ b_0 = b_2 - i b_1 $, $ b_\infty = b_2+ i b_1 $ and
\[
 \tilde f =\frac{i b_0}{-i b_\infty}=\frac{b_1+ i b_2}{b_1 - i b_2} =\frac{f + i}{f - i}.
\]


We shall make repeated use of the polynomial $ \greatest (\B_a) $, defined as the greatest common divisor of $ b\in\B_a $, or equivalently of $ b_1, b_2 $.

\begin{lemma}\label{lemma:degree}
The degree $ \deg (f) $ and winding number $ \wn (\tilde f) $ of $ f $ satisfy
\begin{align*}
\wn(\tilde f)&\equiv\deg(f) \;\mathrm{mod}\;2&&\mbox{and}&
-\deg (f)&<\wn(\tilde{f}) \leq \deg (f).
\end{align*}
\end{lemma}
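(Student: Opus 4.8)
The plan is to exploit the fact that $\tilde f$ commutes with inversion in the unit circle and then read off everything from the argument principle. First I would establish the symmetry. Writing $b_0(\lambda)=\sum_{j=0}^{g+1}c_j\lambda^j$, the definition of $\rho^\ast$ on $P^{g+1}$ gives $b_\infty(\lambda)=\sum_{j=0}^{g+1}\overline{c_{g+1-j}}\,\lambda^j$, and a short computation yields $\overline{b_0(1/\bar\lambda)}=\lambda^{-(g+1)}b_\infty(\lambda)$ and $\overline{b_\infty(1/\bar\lambda)}=\lambda^{-(g+1)}b_0(\lambda)$, hence
\[
\overline{\tilde f(1/\bar\lambda)}=\frac{1}{\tilde f(\lambda)}.
\]
Thus $\tilde f$ intertwines the anti-holomorphic involution $R\colon w\mapsto 1/\bar w$ of $\P^1$; in particular it carries $S^1$ to $S^1$ (as already observed) and, regarded as a rational map $\P^1\to\P^1$, it carries the open discs $D_-=\{|\lambda|<1\}$ and $D_+=\{|\lambda|>1\}$ to the corresponding discs in the target, interchanging them.

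Next I would count. Since $\tilde f(S^1)\subseteq S^1$ and $0,\infty\notin S^1$, the map $\tilde f$ has no zeros or poles on $S^1$; let $a$ and $b$ be the numbers of zeros of $\tilde f$, counted with multiplicity, lying in $D_-$ and in $D_+$, so that $a+b=\deg\tilde f=\deg f=:d$ (the degree is unchanged because $\tilde f$ and $f$ differ by the M\"obius transformation $w\mapsto (w+i)/(w-i)$). The symmetry sends a zero of order $k$ at $\zeta$ to a pole of order $k$ at $1/\bar\zeta$ and swaps $D_-$ with $D_+$, so $\tilde f$ has exactly $b$ poles in $D_-$. Now, for any $q\in\C$ with $|q|<1$ the loop $\tilde f|_{S^1}\colon S^1\to S^1$ is homotopic within $S^1$ to $z\mapsto z^{\wn(\tilde f)}$, so its winding number about $q$ equals $\wn(\tilde f)$; by the argument principle this same winding number equals $\#\bigl(\tilde f^{-1}(q)\cap D_-\bigr)-\#\{\text{poles of }\tilde f\text{ in }D_-\}$. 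Taking $q=0$ gives $\wn(\tilde f)=a-b$, and together with $a+b=d$ this produces the congruence $\wn(\tilde f)\equiv\deg(f)\ \mathrm{mod}\ 2$ and the bounds $-\deg(f)\le\wn(\tilde f)\le\deg(f)$ — everything except the strictness on the left.

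For the strict inequality $\wn(\tilde f)>-\deg(f)$, observe that $b_\infty(0)=\overline{c_{g+1}}=2i\neq 0$ while $b_0(0)=0$, so $\tilde f(0)=0$ with $0\in D_-$; hence $a\geq 1$. Since $b_1,b_2$ are linearly independent, $f$ is non-constant and $d\geq 1$, so $\wn(\tilde f)=2a-d\geq 2-d>-d$, as required.

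I expect the only delicate points to be the verification that $\tilde f$ intertwines $R$ — one must keep the powers of $\lambda$ and the conjugations straight through the definition of $\rho^\ast$ and the normalisation of $b_0$ — and the clean statement that the winding number of a circle-valued loop about an interior point equals its degree as a self-map of $S^1$, which I would phrase via homotopy invariance together with the argument principle as above. Neither step is deep, so I do not anticipate a genuine obstacle.
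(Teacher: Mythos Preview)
Your proof is correct and follows essentially the same strategy as the paper's: exploit the symmetry of $\tilde f$ under $\lambda\mapsto 1/\bar\lambda$ and apply the argument principle on the unit disc, then use $b_0(0)=0\neq b_\infty(0)$ for the strict lower bound. The only real difference is packaging: the paper works with the polynomials $b_1\pm i b_2$ and carries $\gcd(\B_a)$ explicitly through the count (obtaining auxiliary identities such as $\deg(f)-n(\tilde f)=2\bigl(m_{B(0,1)}(b_1-ib_2)-m_{B(0,1)}(\gcd(\B_a))\bigr)$), whereas you work directly with the rational map $\tilde f$, so the common factor is already absorbed into $d=\deg\tilde f$. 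Your version is tidier for this lemma in isolation; the paper's version yields formulas in terms of $\gcd(\B_a)$ that are convenient later. One small wording issue: the sentence claiming $\tilde f$ ``carries $D_-$ and $D_+$ to the corresponding discs, interchanging them'' is not true in general and is not what you use---what you actually need (and correctly state next) is that the involution sends a zero of order $k$ at $\zeta$ to a pole of order $k$ at $1/\bar\zeta$, which gives exactly $b$ poles in $D_-$.
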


\begin{proof}
 The
 polynomials $ b_1 $ and $ b_2 $ have degree $ g +1 $, so 
\begin{equation}\label{eq:degreef}
\deg(f) = g +1 - \deg ({\greatest (\B_a)}),
\end{equation}
where we shall write $ \deg (h) $ for the number of zeros of a meromorphic function $ h:\C P^1\rightarrow\C P^1 $ and $ m_U (h) $ for the number of these occurring on the set $ U\subseteq\C P^1 $. 

Since $ b_1 (0) = 1 $ and $ b_2 (0) = i $ we have $ b_1 (0) + i b_2 (0) = 0 $ and $ b_1 (0) - ib_2 (0) = 2 $. The reality condition $b\in P^{g+1}_\R$ implies that $ b_1+ ib_2 $ is a polynomial of degree $ g +1 $ and highest coefficient $ 2 $. 
By Cauchy's argument principle,
\begin{equation}\label{equation:degreeS1}
\wn(\tilde f) = m_{B (0, 1)} (b_1+ ib_2) - m_{B (0, 1)} (b_1 - ib_2).
\end{equation}
Again using the reality of $b$, a point $ \lambda $ in the open unit disc $ B (0, 1) $ is a root of $ b_1 - ib_2 $ if and only if $ \overline\lambda^{- 1} $ is a root of $ b_1+ ib_2 $, and  all roots of $ b_1+ ib_2 $ on $ S^1 $ are common roots of $ b_1 $ and $ b_2 $. Hence
\begin{align*}
\wn(\tilde f) &= m_{\P^1} (b_1+ ib_2) - m_{S^1} (b_1+ ib_2) -2m_{\P^1 -\overline {B (0, 1)}}(b_1+ib_2)\\
&= g+1 - m_{S^1} ({\greatest (\B_a)}) -2 m_{B (0, 1)} (b_1 - ib_2)
\end{align*}
and substituting the resulting expression for $ g +1 $ into \eqref{eq:degreef} yields
\begin{align}
\deg (f) & =\wn(\tilde f) +2 m_{B (0, 1)} (b_1 - i b_2) - (\deg ({\greatest (\B_a)}) - m_{S^1} ({\greatest (\B_a)}))\nonumber\\
& =\wn(\tilde f) +2\left (m_{B (0, 1)} (b_1 - ib_2) - m_{B (0, 1)} ({\greatest (\B_a)})\right),\label{eq:nine}
\end{align}
where in the last equality we again used that the roots of $ {\greatest (\B_a)} $ are symmetric with respect to involution in the unit circle.

From  \eqref{equation:degreeS1} and \eqref{eq:nine} then 
\begin{equation}
\wn(\tilde f) +\deg (f) =2 \left(m_{B (0, 1)} (b_1+ ib_2) - m_{B (0, 1)} ({\greatest (\B_a)})\right)\label{eq:negative}
\end{equation}
and from either \eqref{eq:nine} or \eqref{eq:negative} we conclude that $ \wn(\tilde f) \equiv\deg (f)\;\mathrm{mod}\;2 $. We note either from these equations or the geometrical meaning of degree that $ -\deg (f)\leq\wn(\tilde f)\leq\deg (f) $. From \eqref{eq:negative} and the fact that $ b_1+ ib_2 $ vanishes at the origin whereas $ {\greatest (\B_a)} $ does not, we have that $ -\deg (f) <\wn(\tilde f) $.
\end{proof}

We now give a characterisation of spectral genus 0 which enables us to exclude equality in the lemma above whenever $ g >0 $.

\begin{theorem}\label{lemma:equivalent} The following three statements are equivalent.
\begin{enumerate}
\item[(i)] $ g (X_a) = 0 $.
\item[(ii)] $ \deg(f) = 1 $.
\item[(iii)] $ \deg(f) =\wn (\tilde f) $.
\end{enumerate}
Hence for $ g >0 $, the winding number of $ f $ obeys $ |\wn(\tilde f)|\le\deg(f)-2 $.
\end{theorem}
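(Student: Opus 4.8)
The plan is to prove the cycle of implications (i)$\Rightarrow$(ii)$\Rightarrow$(iii)$\Rightarrow$(i); the displayed inequality is then a formal consequence. Indeed, if $g>0$ then (i) fails, so (ii) and (iii) fail; since $f$ is nonconstant, $\deg(f)\ge1$ always, so failure of (ii) gives $\deg(f)\ge2$, while failure of (iii) together with $\wn(\tilde f)\equiv\deg(f)\bmod 2$ gives $|\deg(f)-\wn(\tilde f)|\ge2$, and combined with $-\deg(f)<\wn(\tilde f)\le\deg(f)$ this forces $2-\deg(f)\le\wn(\tilde f)\le\deg(f)-2$, i.e.\ $|\wn(\tilde f)|\le\deg(f)-2$.

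For (i)$\Rightarrow$(ii): when $g=0$ the conditions defining $\Hm^0$ leave only $a\equiv1$, so $X_a$ is the rational curve $y^2=\lambda$; it has no holomorphic differentials, so the period condition defining $\B_a$ is vacuous, $\B_a=P^1_\R$ is two-dimensional with generic pair coprime, and \eqref{eq:degreef} yields $\deg(f)=1$. Then (ii)$\Rightarrow$(iii) is immediate from Lemma~\ref{lemma:degree}: if $\deg(f)=1$ then $-1<\wn(\tilde f)\le1$ with $\wn(\tilde f)$ odd, so $\wn(\tilde f)=1$.

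The content is in (iii)$\Rightarrow$(i). Assume $\deg(f)=\wn(\tilde f)$ and substitute this into \eqref{eq:nine} and \eqref{eq:negative}, obtaining $m_{B(0,1)}(b_1-ib_2)=m_{B(0,1)}(\greatest(\B_a))$ and $m_{B(0,1)}(b_1+ib_2)=m_{B(0,1)}(\greatest(\B_a))+\deg(f)$. Write $b_1\pm ib_2=\greatest(\B_a)\,q_\pm$ — legitimate since $\greatest(\B_a)$ divides $b_1$ and $b_2$, and then $q_+,q_-$ are coprime — and use $\deg(b_1+ib_2)=g+1$, so $\deg q_+=\deg(f)$; the two identities then say that all $\deg(f)$ roots of $q_+$ lie in $B(0,1)$ while $q_-$ has none there. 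Since $\tilde f=(b_1+ib_2)/(b_1-ib_2)=q_+/q_-$ is in lowest terms and maps $S^1$ to $S^1$, it has no zero or pole on $S^1$, so the roots of $q_-$ lie outside $\overline{B(0,1)}$, and $\tilde f$ is a finite Blaschke product of degree $\deg(f)$. As any such $\tilde f$ clearly satisfies $\wn(\tilde f)=\deg(f)$, statement (iii) is equivalent to $\tilde f$ being a Blaschke product, and it remains to exclude this when $g>0$.

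This last step is the one I expect to be the main obstacle, being the only place where genuinely more than Lemma~\ref{lemma:degree} is needed. As a Blaschke product, $\tilde f=-b_0/b_\infty$ satisfies $|b_0|<|b_\infty|$ on $\lambda^{-1}(B(0,1))$, with equality exactly along the real ovals $\lambda^{-1}(S^1)$ and the reverse inequality on $\lambda^{-1}(\P^1\setminus\overline{B(0,1)})$. I would try to set this against the distinguished roles of $\Theta_{b_0}$ and $\Theta_{b_\infty}$: each is the unique element of $\B_a\otimes\C$ holomorphic off $x_\infty$, respectively $x_0$, with a residue-free double pole there, and they are interchanged up to conjugation by $\rho^\ast$, which is the content of $b_\infty=\overline{\rho^\ast b_0}$. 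From $b_1=\frac{i}{2}(b_0-b_\infty)$, $b_2=\frac12(b_0+b_\infty)$ and the hypothesis that $\Theta_{b_1},\Theta_{b_2}$ have purely imaginary periods one finds that the periods of $\Theta_{b_0}$ are invariant under the action of $\rho$ on $H_1(X_a;\Z)$. My expectation is that this period rigidity, together with the one-sided size comparison and the argument principle for $b_0/b_\infty$ on the bordered surface $\lambda^{-1}(\overline{B(0,1)})$, over-determines the two-dimensional space $\B_a$ unless $X_a$ has no periods, i.e.\ $g=0$. Turning this into a proof — plausibly via Riemann's bilinear relations or a maximum-principle argument keyed to the number of real ovals of $(X_a,\rho)$ — is the crux; everything preceding it is formal.
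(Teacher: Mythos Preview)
Your handling of (i)$\Rightarrow$(ii), (ii)$\Rightarrow$(iii), and the final inequality is fine and matches the paper's. Your reformulation of (iii) as ``$\tilde f$ is a finite Blaschke product'' is correct and is essentially the starting observation the paper uses (in the equivalent form ``$f(\lambda)\in\R$ only for $\lambda\in S^1$, and $d\tilde f$ does not change sign on $S^1$'').

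However, the proposal is genuinely incomplete: you yourself flag that the step ``Blaschke $\Rightarrow g=0$'' is the crux, and you offer only heuristics (period rigidity, bilinear relations, a maximum principle) without an argument. None of those suggestions is close to the mechanism the paper actually uses, and it is not clear any of them can be made to work directly.

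The paper's argument for (iii)$\Rightarrow$(i) is topological rather than period-theoretic. On the bordered half $X^+=\lambda^{-1}(B(0,1))$ one integrates the real parts of $\Theta_{b_1},\Theta_{b_2}$ to get a harmonic map
\[
P=(p_1,p_2):X^+\setminus\{x_0\}\to\R^2,\qquad dp_k=\Re\,\Theta_{b_k}.
\]
The Blaschke condition enters exactly here: since $f(\lambda)\in\R$ only on $S^1$, one checks that $P$ is an immersion on $X^+$ away from $x_0$ and the preimages of the roots of $\greatest(\B_a)$. Composing with inversion in the unit circle extends $P$ across $x_0$ to a $C^1$ map $\tilde P:X^+\to\R^2$ sending $\partial X^+$ to $\infty$. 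An open-and-closed argument on the sets of points in $\R^2$ with at least $m$ preimages then shows $\tilde P$ is a bijection, hence a homeomorphism $X^+\simeq\R^2$. Since $\lambda:X^+\to B(0,1)$ is a double cover branched at $g+1$ points, simple connectivity of $X^+$ forces $g=0$.

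So the missing idea is: integrate $\Re\,\Theta_{b_k}$ to build an immersed map to the plane, and use a degree/properness argument to upgrade ``immersion'' to ``homeomorphism''. Your Blaschke observation is precisely the hypothesis that makes the immersion step go through, but the subsequent global injectivity argument is the real content and is absent from your proposal.
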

\begin{proof}{\bf(i)$ \Rightarrow $(ii):}
The unique spectral curve with $ g = 0 $ is given by $ y^2 =\lambda $. Then
\[
d\left(2 -\frac{2}{y}\right) =\frac{\lambda +1}{y}\frac{d\lambda}{\lambda}\quad\mbox{and}\quad d\left(-2 iy -\frac{2 i}{y}\right) =\frac{- i (\lambda -1)}{y}\frac{d\lambda}{\lambda}
\]
so that $ b_1(\lambda) =\lambda +1 $ and $ b_2 = - i (\lambda -1) $, giving $ f = i\left(\frac{\lambda +1}{\lambda -1}\right) $ and $ \tilde f =\lambda $. This implies $ \deg (f) = 1 =\wn(\tilde f) $.


~\\
{\bf(ii)$ \Rightarrow $(iii):} This is obvious from Lemma \ref{lemma:degree}. 
~\\
{\bf(iii)$ \Rightarrow $(i):} Assume then that $ \wn(\tilde f) =\deg(f) $. The equality of the degrees implies both that $ f (\lambda) $ is real only when $ \lambda\in S^1 $ and that the differential of $ \tilde f: S^1\rightarrow S^1 $ does not change sign for $ \lambda\in S^1 $. Writing $ x_0 $ for the unique point over $ \lambda = 0 $, let $ X^+ $ denote the non compact Riemann surface containing $ x_0 $ given by $ \lambda^{- 1}{B (0, 1)}\subseteq X_a $. The boundary $ \partial X^+ $ coincides with $ \lambda^{-1} (S^1 ) $. Now we prove (i) in seven steps.

\noindent{\bf 1. Definition of ${P: X^+\setminus\{x_0\}\rightarrow\R^2} $:}
For $ b\in\B_a $, the real part of $ \Theta_b $ is a meromorphic differential with no residues and vanishing periods. Hence we may define real-valued harmonic functions $ p_1, p_2 $ on $ X^+\setminus\{x_0\} $ by
\[
d p_1 =\Re\Theta_{b_1},\quad dp_2 =\Re\Theta_{b_2}
\]
and $ p_1, p_2 $ are unique if we further require that $ \sigma_*(p_1) = - p_1 $ and $ \sigma_*(p_1) = - p_2 $. We have then a smooth map
\[
P = (p_1, p_2): X^+\setminus\{x_0\}\rightarrow\R^2.
\]

\noindent{\bf 2. $P$ is an immersion on ${ X^+\setminus\{x_0, x_1,\ldots, x_l\}} $:} Here ${x_1,\ldots, x_l }$ are the pre-images under $\lambda$ of the roots of ${\greatest (\B_a)}$. Suppose that for some $ x\in X^+\setminus\{x_0, x_1,\ldots, x_l\} $ we have $ \alpha_1 (d p_1)_x =\alpha_2 (d p_2)_x $ for $ \alpha _1,\alpha _2\in\R $ not both zero. Locally on $ X^+\setminus\{x_0\} $ there are holomorphic functions $ q_1, q_2 $ and harmonic functions $ r_1, r_2 $ such that
\[
d q_k =\Theta_{b_k}\text { and } q_k = p_k + i r_k.
\]
Since $ d q_k $ is complex linear, 
\[
 d p_k (i v) + i d r_k (i v) = i d p_k (v) - d r_k (v) 
\]
and hence also $ \alpha _1 (d q_1)_x =\alpha _2 (d q_2)_x $ and in particular $ \alpha_1 b_1 (\lambda (x)) =\alpha _2 b_2 (\lambda (x) ) $ so since $ x $ is not a root of $ {\greatest (\B_a)} $, we have that $ \tilde f (x)\in S^1 $. But we assumed above that this occurs only for $ x\in\partial X^+ $, a contradiction and so $ P $ is indeed an immersion on $ X^+\setminus\{x_0, x_1, \ldots, x_l\} $.

\noindent{\bf 3. Definition of ${\tilde P: X^+\rightarrow\R^2}$:} We define $ \R^2\cup\{\infty\} $ to be the one-point compactification of $ \R^2 $ with its usual topology (i.e. that of the Riemann sphere) and write $ \tilde P $ for the composition of $ P $ with inversion in the unit circle. Explicitly, 
\[
 \tilde P = \dfrac {(p_1, p_2)}{p_1^2+ p_2^2} .
\]
 Note that on $ \partial X^+ $, the reality condition $b\in P^{g+1}_\R$ forces the locally defined functions $ q_k =\log\mu_k $ to be purely imaginary so $ P (\partial X^+) = 0 $ and hence $ \tilde P (\partial X^+) =\infty $. We extend $ \tilde{P} $ to a continuous map from the closure $ \bar{X}^+ $ to $ \R^2\cup\{\infty\} $.

\noindent{\bf 4. ${\tilde{P}} $ is an immersion on $ {X^+\setminus\{x_1, \ldots , x_l\}} $:} Inversion is an immersion, so we need only prove this at $ x_0 $. The normalisation $ b_1 (0) = 1, b_2 (0) = i $ forces $ g: = q_1+ i q_2 $ to be holomorphic at $ x_0 $ and we may choose the constant of integration so that $ g (0) = 0 $. Near $ x_0 $ we may define a local coordinate $ z $ for $ X $ by
\[
q_1 =\frac{1}{z},\quad q_2 =\frac{- i }{z} + g.
\]
Then
\[
p_1 =\Re\left (\frac{1}{z }\right),\quad p_2 =\Re\left ( \frac{- i }{z}\right) +\Re (g),
\]
 so
\[
p_1^2+ p_2^2 =\frac{1}{z\bar z} +2\Re\left (\frac{- i}{z}\right)\Re g + (\Re g)^2 =\frac{1}{z\bar z}\left (1+ o (z\bar z)\right),
\]
giving
\[
\frac{1}{p_1^2+ p_2^2} = z\bar z (1+ o (z\bar z))
\]
and
\[
\tilde p_1 =\frac{p_1}{p_1^2+ p_2^2} =\Re (z) (1+ o (z\bar z)),\quad
\tilde p_2 =\frac{p_2}{p_1^2+ p_2^2} =\Im(z) (1+ o (z \bar z)).
\]
Then we can explicitly differentiate by first principles and see that $ \tilde P $ is differentiable with continuous derivative. Furthermore at $ x_0 $,
\[
d \tilde p_1 =\Re (d\bar z),\quad d\tilde p_2 =\Im (d z)
\]
and hence $ \tilde P $ is an immersion at $ x_0 $.

\noindent{\bf 5. $\tilde{P}$ is a homeomorphism $X^+\setminus\{ x_1,\ldots, x_l\}\simeq\R^2\setminus\{\tilde{P}(x_1),\ldots, \tilde  P(x_l)\}$:} 
For $ m\in\Z^+ $, denote by $ W^+ (m) $ the set of points in $ \R^2\setminus\{  \tilde {P}(x_1),\ldots,  \tilde {P} (x_l)\} $ whose pre-image with respect to $ \tilde {P} $ contains at least $ m $ elements. We shall show that $ W^+ (m) $ is open and closed as a subset of $ \R^2\setminus\{  \tilde {P}(x_1),\ldots,  \tilde {P} (x_l)\} $. For any $ w =  \tilde {P} (z_1) =\cdots =  \tilde {P} (z_m)\in W^+ (m) $, since $ \tilde {P} $ is an immersion at each (distinct) $ z_j $, we may apply the $ \mathrm{C}^1 $-Inverse Function Theorem to see that a neighbourhood of $ w $ is also contained in $ W^+ (m) $ and so $ W^+ (m) $ is open. To proof closedness we take a sequence $ \{w (n) \} $ in $ W^+ (m) $, converging to $ w\in \R^2\setminus\{  \tilde {P}(x_1),\ldots,  \tilde {P} (x_l)\}$. Take for each $ j = 1, \ldots , m $ a  sequence $ z_ j (n) \in X^+\setminus\{ x_1,\ldots, x_l\} $ so that $ \tilde {P} (z_ j (n)) = w (n) $ and such that for $ i\neq j $, the sequences $ z_i (n) $ and $ z_j (n) $ are disjoint. A subsequence of each $ \{z_j (n)\} $ must converge in the compact set $ \bar{X}^+=X^+\cup\partial X^+ $ to some point $ z_j $. Hence for $ j = 1,\ldots, m $ we have $ w =  \tilde {P} (z_j) $ and since $ w\in  \R^2\setminus\{  \tilde {P}(x_1),\ldots,  \tilde {P} (x_l)\} $, then also $ z_j\in X^+\setminus\{ x_1,\ldots, x_l\} $. Hence $ \tilde {P} $ is an immersion at each $ z_j $. If $ z_i = z_j $ for $ i\neq j $ then by the Inverse Function Theorem we have $ z_i (n) = z_j (n) $ for sufficiently large $ n $, contradicting our definition of the sequences. Thus we see that $ W^+ (m) $ is closed in $ \R^2\setminus\{  \tilde {P}(x_1),\ldots,  \tilde {P} (x_l)\} $. 

The set $ W $ of points in $ \R^2\setminus\{  \tilde {P}(x_1),\ldots,  \tilde {P} (x_l)\} $ whose pre-image with respect to $ \tilde {P} $ contains exactly one element is the intersection of $ W^+ (1) $ with the complement in $ \R^2\setminus\{  \tilde {P}(x_1),\ldots,  \tilde {P} (x_l)\} $ of $ W^+ (2) $ and hence is also open and closed in $ \R^2\setminus\{  \tilde {P}(x_1),\ldots,  \tilde {P} (x_l)\} $. Moreover $ W $ is nonempty, as we shall now demonstrate. Since $ \lambda = 0 $ is not a root of $ {\greatest (\B_a)} $, applying the $ \mathrm{C}^1 $ -Implicit Function Theorem at $ x_0 $, there is a neighbourhood $ U $ of $ x_0 $ in $ \bar{X}^+ $ such that the restriction of $ \tilde {P} $ to $ U $ is an embedding onto the complement of a compact subset of $ \R^2 $. The image of $ \bar{X}^+\setminus U $ under $ \tilde {P} $ is compact, so $ W $ contains all elements in $ \R^2\setminus\{  \tilde {P}(x_1),\ldots,  \tilde {P} (x_l)\} $ outside of the union of these two compact sets in $ \R^2 $. In particular then $ W $ is non-empty so since it is both open and closed in $ \R^2\setminus\{  \tilde {P} (x_1),\ldots,  \tilde {P} (x_l)\} $ we conclude that $ W = \R^2\setminus\{  \tilde {P} (x_1),\ldots,  \tilde {P} (x_l)\} $. Since we have already shown that it is an immersion, we have that $ \tilde {P} $ is a homeomorphism from $ X^+\setminus\{ x_1,\ldots, x_l\} $ onto $ \R^2\setminus\{  \tilde {P} (x_1),\ldots,  \tilde {P}(x_l)\} $.

\noindent{\bf 6. $\tilde{P}$ is  a homeomorphism $X^+\simeq\R^2$:} If there exists $ x\in X^+ $ such that $ \tilde {P} (x) =  \tilde {P} (x_k) $ for some $ k = 1,\ldots, l $ then since $ \tilde P $ is a homeomorphism $ X^+\setminus\{x_1,\ldots, x_l\}\simeq\R^2\setminus\{  \tilde {P} (x_1),\ldots,  \tilde {P} (x_l)\} $ it must be that $ x = x_j $ for some $ j = 1,\dots, l $. Let $ U_k $ be a compact coordinate neighbourhood of $ x_k $ which does not contain any other $ x_j $. Any closed neighbourhood of $ x_k $ contained in $ U $ is compact, and so its image under the continuous map $ \tilde  P $ is compact and hence closed. Thus the local inverse of $ \tilde P $ which sends $ y =\tilde P (x_k) $ to $ x_ k $ is continuous. Similarly, the local inverse which maps $ \tilde P (x_j) =\tilde P (x_k) $ to $ x_j $ is continuous. But then since these local inverses must agree away from $ \tilde P (x_k) $ in fact they must agree also on $ \tilde P (x_k) $. Patching now yields that $ \tilde P $ is a homeomorphism $ X^+\simeq\R^2 $.

\noindent{\bf 7. $X_a$ has genus zero and $\deg f = 1$:} 
Using the real structure $ \rho $, we know that $ X^+ $ contains $ g +1 $ branch points. However we have just proven that it is simply connected, and hence $ g = 0 $.\end{proof}
For $g=0$ one clearly has $\deg(\greatest(\B_a))=0$; this theorem shows that this is true also for $g=1$. Hence $\Rm^0$ and $\Rm^1$ are empty and there are no CMC tori of spectral genus zero or one.

In proving Theorem~\ref{theorem:decomposition} below, we shall use induction on the spectral genus $ g $. The following two lemmata specify a method of deforming a given spectral curve by adding to it a small handle, and some consequences of this deformation on the function $ f $. Their proofs can be found in \cite{CS:12}, where they can be read independently from the preceding material.
\begin{lemma}\cite[Lemma~7]{CS:12} \label{lemma:polynomialperturbation}
Fix $ a\in\Hm^g $, $ b\in\B_a $, $ \alpha\in S^1 $  and a choice of $ \sqrt  {\bar\alpha} $. There exists $ \epsilon >0 $ such that for each $ t\in (-\epsilon, 0)\cup (0,\epsilon) $, 
the polynomial
\[
a_t (\lambda) = (\lambda -\alpha e^{ t})
(\bar\alpha\lambda - e^{-  t}) a (\lambda)
\]
lies in $ \Hm^{g +1} $ and for $ t\in (-\epsilon,\epsilon) $ 
the conditions
\begin{enumerate}
\item[(i)]   $ b_0 =\sqrt {\bar\alpha} (\lambda -\alpha) b $,
\item[(ii)] $ b_t (0) = -\alpha\sqrt {\bar\alpha}\, b (0) $ 
\end{enumerate}
determine a real-analytic family of polynomials $ b_t\in\B_{a_t} $. 

Note that {\rm (i)} is equivalent to $ \Theta_b =\iota_0^\ast(\Theta_{b_0}) $ where $ \iota_0: X_a\rightarrow X_{a_0} $ denotes the normalisation map $ (\lambda, y)\mapsto (\lambda,\sqrt{\bar\alpha} (\lambda - a) y) $.
\end{lemma}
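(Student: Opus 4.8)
The plan is to treat $a_t$, $t\neq0$, as a one-handle smoothing of the nodal curve $X_{a_0}$ whose normalisation is $X_a$, and to transport the two-real-dimensional spaces $\B_{a_t}$ across this degeneration by a deformation-theoretic argument. First, the elementary verifications: expanding, $q_t(\lambda):=(\lambda-\alpha e^{t})(\bar\alpha\lambda-e^{-t})=\bar\alpha\lambda^{2}-2\cosh(t)\lambda+\alpha$ is real with respect to $\rho$, its roots $\alpha e^{\pm t}$ are exchanged by $\lambda\mapsto\bar\lambda^{-1}$ and avoid the (fixed, off-circle) roots of $a$ once $|t|$ is small, and on the unit circle $\lambda^{-1}q_t(\lambda)=2(\Re(\bar\alpha\lambda)-\cosh t)$ has constant sign for $t\neq0$; since $\rho^{\ast}$ and the positivity condition are multiplicative, this shows that $a_t=q_ta$ lies in $\Hm^{g+1}$ and that $X_{a_t}$ is smooth, for every sufficiently small $t\neq0$. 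Note that $a_t$ depends only on $\cosh t$, so the family is even in $t$; writing $\tau:=\cosh t-1\geq0$ one has the affine relation $a_t=a_0-2\tau\lambda a(\lambda)$ with $a_0=\bar\alpha(\lambda-\alpha)^{2}a$, and $\tau$ is a real-analytic function of $t$, so it suffices to produce the family real-analytically in $\tau$ on a half-neighbourhood of $0$.

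At $\tau=0$ the curve $X_{a_0}\colon y^{2}=\bar\alpha\lambda(\lambda-\alpha)^{2}a(\lambda)$ has a single node over $\lambda=\alpha$, with normalisation $\iota_0\colon X_a\to X_{a_0}$, $(\lambda,y)\mapsto(\lambda,\sqrt{\bar\alpha}(\lambda-\alpha)y)$, the two points $p_{\pm}$ of $X_a$ over $\alpha$ being glued to the node. For $\tau>0$ the smooth genus-$(g+1)$ curve $X_{a_\tau}$ is obtained from $X_a$ by opening this node: there is a vanishing cycle $\gamma_\tau$ and, in suitable local coordinates near $p_{\pm}$ (the chosen branch of $\sqrt{\bar\alpha}$ selecting them), a plumbing description $z_{+}z_{-}=s(\tau)$ with $s$ real-analytic, $s(0)=0$ and $s(\tau)\neq0$ for $\tau\neq0$, while away from the handle $X_{a_\tau}$ is canonically identified with $X_a$ minus two disks. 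I would set this up with enough care that, for any polynomial $b$ of degree $\leq g+2$ depending real-analytically on $\tau$, the periods of $\Theta_b^{(\tau)}=b\,d\lambda/(\lambda y_\tau)$ over $\gamma_\tau$ and over a complementary system of $2g$ ``surviving'' cycles are real-analytic in $\tau$ up to and including $\tau=0$; the only period that can fail to extend is the one over the cycle dual to $\gamma_\tau$, where the obstruction is exactly the term $\bigl(\mathrm{Res}_{\mathrm{node}}\Theta_b^{(\tau)}\bigr)\log|s(\tau)|$. Making this precise — the plumbing identification, the real-analytic parameter $s(\tau)$, and the estimates showing that, modulo that residue-times-$\log$ term, the period integrals extend analytically — is the main obstacle.

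With that in hand the argument closes quickly. The residue of $\Theta_b^{(\tau)}$ at the handle is, in the limit, a nonzero fixed multiple of $b(\alpha)$, and the reality condition built into the definition of $\B$ forces this residue to be real; so for $b\in\B_{a_\tau}$ (purely imaginary periods over all cycles) the dual-$\gamma_\tau$ period, which equals the residue times $\log|s(\tau)|$ up to a bounded error, can be purely imaginary for all small $\tau$ only if the residue vanishes, i.e. $b(\alpha)=0$. Thus every limit point of $\B_{a_\tau}$ as $\tau\to0^{+}$ consists of polynomials divisible by $(\lambda-\alpha)$; for $b=\sqrt{\bar\alpha}(\lambda-\alpha)c$ one has $\iota_0^{\ast}\Theta_b^{(0)}=\Theta_c$ on $X_a$, whose periods over a basis of $H_1(X_a)$ are the limits of the surviving-cycle periods and so are purely imaginary, whence $c\in\B_a$. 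Since each $\B_{a_\tau}$ ($\tau>0$) is two-dimensional and $\sqrt{\bar\alpha}(\lambda-\alpha)\B_a$ is already a two-dimensional space of candidate limits, the limit does not grow: $\B_{a_\tau}\to\sqrt{\bar\alpha}(\lambda-\alpha)\B_a$ in the Grassmannian, and — restricting the period conditions to polynomials divisible by $(\lambda-\alpha)$, where no $\log$ term occurs — the family $\tau\mapsto\B_{a_\tau}$ is a real-analytic rank-two bundle over a neighbourhood of $\tau=0$.

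Finally, since $b\mapsto b(0)$ is a real-linear isomorphism $\B_{a_\tau}\to\C$ for $\tau>0$ and also at $\tau=0$ (on $\sqrt{\bar\alpha}(\lambda-\alpha)\B_a$ it is $c\mapsto-\alpha\sqrt{\bar\alpha}\,c(0)$, an isomorphism because $c\mapsto c(0)$ is one on $\B_a$), it is an isomorphism on the whole neighbourhood, and there is a unique real-analytic section $b_t$ of the $\B$-bundle with $b_t(0)\equiv-\alpha\sqrt{\bar\alpha}\,b(0)$; this is condition (ii), and for $t\neq0$ it lies in $\B_{a_t}$. At $t=0$, $b_0$ is the unique element of $\sqrt{\bar\alpha}(\lambda-\alpha)\B_a$ with value $-\alpha\sqrt{\bar\alpha}\,b(0)$ at $\lambda=0$, and since $\sqrt{\bar\alpha}(\lambda-\alpha)b$ is such an element we get $b_0=\sqrt{\bar\alpha}(\lambda-\alpha)b$, which is condition (i); the identity $\Theta_b=\iota_0^{\ast}\Theta_{b_0}$ then follows at once from $\iota_0^{\ast}\bigl(b_0\,d\lambda/(\lambda y_0)\bigr)=\sqrt{\bar\alpha}(\lambda-\alpha)b\,d\lambda/(\lambda\,\sqrt{\bar\alpha}(\lambda-\alpha)y)=b\,d\lambda/(\lambda y)$. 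Besides the plumbing analysis flagged above, the only other point needing care is the non-jumping of the dimension at $\tau=0$, which the explicit identification of the limiting two-plane with $\sqrt{\bar\alpha}(\lambda-\alpha)\B_a$ settles.
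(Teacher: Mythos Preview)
The paper does not prove this lemma itself; it is quoted from \cite{CS:12} and the reader is referred there, so there is no in-paper argument to compare against directly. Your outline is the right picture and would work, but two points deserve comment.

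First, a cosmetic sign check: on $S^1$ your computation gives $\lambda^{-1}q_t(\lambda)=2(\Re(\bar\alpha\lambda)-\cosh t)<0$ for $t\neq0$, so $\lambda^{-(g+1)}a_t<0$, not $>0$. This is a normalisation issue (note the $(-1)^g$ in the paper's formula for $a$) and does not affect the argument.

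Second, and more substantively, the detour through the $B$-period dual to $\gamma_\tau$ and its $\log|s(\tau)|$ term is unnecessary, and your step ``restricting the period conditions to polynomials divisible by $(\lambda-\alpha)$'' does not quite do what you want: for $\tau>0$ the elements of $\B_{a_\tau}$ are \emph{not} divisible by $\lambda-\alpha$, so restricting to that subspace does not recover $\B_{a_\tau}$. The cleaner route is to observe that the reality structure forces the $A$-periods of $\Theta_b$ to be real, so ``all periods purely imaginary'' is exactly ``all $g{+}1$ $A$-periods vanish'', and the $B$-periods play no role in \emph{defining} $\B_{a_\tau}$. These $g{+}1$ linear functionals on $P^{g+2}_\R$ are integrals of $b\,d\lambda/(\lambda y_t)$ over loops in the $\lambda$-plane that can be held fixed for small $t$ (one small circle around $\alpha$ for the vanishing cycle, the rest around the old branch-point pairs), and on those fixed contours the integrand depends real-analytically on $t$; hence so do the functionals, with no logarithmic obstruction at all. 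At $t=0$ the vanishing-cycle functional becomes a nonzero multiple of $b\mapsto b(\alpha)$ and the remaining $g$ become the $A$-period functionals of $\Theta_{b/(\sqrt{\bar\alpha}(\lambda-\alpha))}$ on $X_a$, so the kernel of the limiting system is exactly $\sqrt{\bar\alpha}(\lambda-\alpha)\B_a$, still two-dimensional. Constant rank plus real-analytic coefficients gives the real-analytic family of two-planes immediately, and your final paragraph (selecting $b_t$ by the isomorphism $b\mapsto b(0)$ and verifying (i) via $\iota_0^\ast$) then finishes the argument exactly as written.
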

\begin{lemma} \cite[Lemma~8]{CS:12}
\label{lemma:induction}
Suppose $ a\in\Hm^g\setminus\Rm^g $ 
and $ \alpha\in S^1 $ with $ df (\alpha)\neq 0 $. There exists $ \epsilon >0 $ such that for $ t\in (-\epsilon, 0)\cup (0,\epsilon) $, setting
\[
a_t (\lambda) = (\lambda -\alpha e^{ t}) (\bar\alpha\lambda - e^{-  t}) a (\lambda)\in\Hm^{g +1},
\] 
the degree of $ f_t $ satisfies
\[
\deg (f_t) =\deg (f) +1 
\]
 and hence $ a_t\in\Hm^{g +1}  \setminus\R^{g +1} $. Furthermore in addition to roots nearby those of $ df $, the differential $ df_t $ has two additional roots on $ S^1 $ in a neighbourhood of $ \lambda =\alpha $.
\end{lemma}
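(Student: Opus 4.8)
The argument is the ``attach a handle'' technique used in this circle of ideas: transfer a pair of generators of $\B_a$ to $\B_{a_t}$ via Lemma~\ref{lemma:polynomialperturbation}, then read off $\deg f_t$ and the zeros of $df_t$ from the perturbed data. It is cleanest to work with the canonical representatives $b_0,b_\infty$ rather than an arbitrary basis. Since $a\in\Hm^g\setminus\Rm^g$, any two linearly independent elements of $\B_a$ are coprime, $\deg f=g+1$ by \eqref{eq:degreef}, and $b_0,b_\infty\in\B_a\otimes\C$ are coprime of degree $g+1$; moreover $b_0(\alpha)\neq 0\neq b_\infty(\alpha)$, since $\tilde f=b_0/b_\infty$ maps $S^1$ to $S^1$ and so takes neither value $0$ nor $\infty$ at $\alpha\in S^1$. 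I would apply Lemma~\ref{lemma:polynomialperturbation} with $b=b_0$ to obtain a real-analytic family $b_{0,t}\in\B_{a_t}\otimes\C$ with $b_{0,0}=\sqrt{\bar\alpha}(\lambda-\alpha)b_0$, normalised so its leading coefficient is $-2i$, and set $b_{\infty,t}=\overline{\rho^\ast b_{0,t}}$; then (as in the discussion before Lemma~\ref{lemma:degree}) $b_{\infty,0}$ is a nonzero constant times $(\lambda-\alpha)b_\infty$, $\tilde f_t=b_{0,t}/b_{\infty,t}$, and $f_t$ is Möbius-equivalent to $\tilde f_t$ for all small $t$. Since $b_{0,0}$ has a simple zero at $\alpha$ and $b_0(\alpha)\neq 0$, the Implicit Function Theorem yields a real-analytic root $\beta(t)$ of $b_{0,t}$ with $\beta(0)=\alpha$, and, as $b_{\infty,t}=\overline{\rho^\ast b_{0,t}}$, the unique root of $b_{\infty,t}$ near $\alpha$ is $\overline{\beta(t)}^{-1}$. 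Everything in the lemma sits in the behaviour of $\beta(t)$.

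\emph{Degree.} As $b_{0,t},b_{\infty,t}$ are linearly independent at $t=0$ and $\dim_\C\B_{a_t}\otimes\C=2$, they span $\B_{a_t}\otimes\C$ for small $t$; hence by \eqref{eq:degreef} in spectral genus $g+1$ one has $\deg f_t=\deg\tilde f_t=(g+2)-\deg\gcd(b_{0,t},b_{\infty,t})$. The only common root of $b_{0,0}$ and $b_{\infty,0}$ is $\alpha$, so for $0<|t|<\epsilon$ a common root can arise only near $\alpha$, i.e.\ exactly when $\beta(t)=\overline{\beta(t)}^{-1}$, i.e.\ when $\beta(t)\in S^1$. Thus the degree claim, and hence $a_t\in\Hm^{g+1}\setminus\Rm^{g+1}$, reduces to showing $\beta(t)\notin S^1$ for $0<|t|<\epsilon$.

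\emph{Critical points.} The zeros of $df_t=d\tilde f_t$ on $\P^1$ are the zeros of the Wronskian $W_t=b_{0,t}'b_{\infty,t}-b_{0,t}b_{\infty,t}'$, and a direct computation gives $W_0=(\mathrm{const})\,(\lambda-\alpha)^2\,(b_0'b_\infty-b_0b_\infty')$, where $b_0'b_\infty-b_0b_\infty'$ is nonzero at $\alpha$ because $df(\alpha)\neq 0$ and $b_\infty(\alpha)\neq 0$. Hence $W_0$ has a double zero at $\alpha$ and $2g$ simple zeros at the critical points of $f$, so by continuity, for $0<|t|<\epsilon$, exactly two zeros of $df_t$ lie near $\alpha$ and the other $2g$ near the zeros of $df$. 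Writing $\lambda=\alpha+\xi$ and expanding $d\log\tilde f_t=\bigl(\tfrac1{\lambda-\beta(t)}-\tfrac1{\lambda-\overline{\beta(t)}^{-1}}+\hat h_t(\lambda)\bigr)d\lambda$ with $\hat h_0(\alpha)=\tilde f'(\alpha)/\tilde f(\alpha)$, one finds these two zeros at $\xi\approx\pm\alpha\sqrt{-2\,\real(\bar\alpha(\beta(t)-\alpha))/r}$, where $r:=\alpha\,\tilde f'(\alpha)/\tilde f(\alpha)$ is a nonzero \emph{real} number (the reality coming from $\tilde f(S^1)\subseteq S^1$, the nonvanishing from $df(\alpha)\neq 0$). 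Consequently the two new zeros lie on $S^1$ exactly when the radial displacement $\real(\bar\alpha(\beta(t)-\alpha))$ has a definite sign relative to $r$, and this must hold uniformly in the sign of $t$.

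\emph{Main obstacle.} Both halves of the proof now rest on a single delicate point: a second-order analysis in $t$ of the root $\beta(t)$ of $b_{0,t}$ near $\alpha$. One must show that $\beta'(0)$ is tangent to $S^1$ at $\alpha$ — so that $\real(\bar\alpha(\beta(t)-\alpha))=\tfrac12\real(\bar\alpha\beta''(0))t^2+O(t^3)$ and its sign does not flip with $t$ — and that the second-order radial term $\real(\bar\alpha\beta''(0))$ is nonzero, pushes $\beta(t)$ off $S^1$, and has exactly the sign dictated by $r$ that sends the two new critical points onto $S^1$. These facts are obtained by differentiating the defining relations (i)–(ii) of Lemma~\ref{lemma:polynomialperturbation} together with the explicit factor $(\lambda-\alpha e^{t})(\bar\alpha\lambda-e^{-t})$; in particular one must differentiate the period normalisation cutting out $\B_{a_t}$, which is the technical heart of the matter. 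Once $\beta(t)$ is controlled to second order, the remainder is bookkeeping with resultants, Wronskians and Riemann–Hurwitz.
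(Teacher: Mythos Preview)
The paper does not actually prove this lemma: it is quoted from \cite[Lemma~8]{CS:12}, and the paragraph preceding Lemmas~\ref{lemma:polynomialperturbation} and~\ref{lemma:induction} explicitly sends the reader to \cite{CS:12} for the proofs. So there is no in-paper argument to compare against; I can only assess your outline on its own terms.

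Your plan is sound in its broad strokes, and you are candid that the ``main obstacle'' is left open. One simplification you have missed: expanding the extra factor gives
\[
(\lambda-\alpha e^{t})(\bar\alpha\lambda-e^{-t})=\bar\alpha\lambda^{2}-(e^{t}+e^{-t})\lambda+\alpha,
\]
which is \emph{even} in $t$, so $a_t=a_{-t}$ and hence $\B_{a_t}=\B_{a_{-t}}$. The normalisation~(ii) in Lemma~\ref{lemma:polynomialperturbation} is independent of $t$, and an element of $\B_{a_t}$ is determined by its value at $0$; therefore the families $b_{k,t}$ are even in $t$, whence $\beta(t)=\beta(-t)$ and $\beta'(0)=0$ outright. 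There is thus no first-order issue: your worry about whether $\beta'(0)$ is tangent to $S^1$ evaporates.

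The genuine gap, however, sits exactly where you place it. One must still show that the second-order radial displacement $\real\bigl(\bar\alpha\,\beta''(0)\bigr)$ is nonzero and carries the sign dictated by $r=\alpha\,\tilde f'(\alpha)/\tilde f(\alpha)$, so that $\beta(t)\notin S^1$ for $0<|t|<\epsilon$ and the two new critical points land on $S^1$. This requires differentiating the period normalisation cutting out $\B_{a_t}$ to second order at the singular curve $a_0$, and your proposal stops short of carrying that out. Without it, neither conclusion of the lemma is established. Everything else in your sketch---coprimality of $b_0,b_\infty$ on $\Hm^g\setminus\Rm^g$, the Wronskian $W_0$ having a double zero at $\alpha$, the reality of $r$---is correct. (A minor point: Lemma~\ref{lemma:polynomialperturbation} is stated for real $b\in\B_a$, so strictly you should apply it to $b_1,b_2$ separately and then pass to $b_{0,t}=b_{2,t}-ib_{1,t}$; this costs only a harmless unimodular constant in the relation $b_{0,0}=\mathrm{const}\cdot(\lambda-\alpha)b_0$.)
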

We claim that these two roots of $ d f_t $ are distinct. Otherwise the double root would correspond to 3 sheets of $ f_t $ coming together to form a branch point, since $ S^1 $ has no singularities. The original $ d f $ is non-vanishing at $ \alpha $ and so there is only one sheet available from the (deformation of the) original covering to join to the new sheet of $ f_t $ and so if $ \deg (f_t) =\deg (f) +1 $ we see that the new roots of $ d f_t $ are distinct.

\begin{theorem} \label{theorem:decomposition}
Define $ V_j =\{a\in\Hm^g\setminus\Sm^g \mid\wn (\tilde f) = j\} $. Then for $ g\geq 1 $, the set $ \Hm^g\setminus\Sm^g $ is the following union of non-empty, open and disjoint sets:
\[
\Hm^g\setminus\Sm^g= V_{1 - g}\cup V_{3 - g}\cup \ldots \cup V_{g -3}\cup V_{g -1}.
\]
\end{theorem}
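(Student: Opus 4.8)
The plan is to establish four things: (1) the $V_j$ are disjoint and cover $\Hm^g\setminus\Sm^g$ with $j$ ranging over $\{1-g,3-g,\dots,g-1\}$; (2) each $V_j$ is open; (3) each $V_j$ is non-empty. Disjointness is immediate since $\wn(\tilde f)$ is a well-defined integer-valued function on $\Hm^g$ (independent of the choice of $b_1,b_2$, as $\tilde f$ is canonical up to rotation of $S^1$, which does not change the degree). For the range: on $\Hm^g\setminus\Sm^g$ there is some $b\in\B_a$ with $b(\lambda_0)\neq 0$ for every $\lambda_0\in S^1$, which forces $\greatest(\B_a)$ to have no roots on $S^1$; combined with Theorem~\ref{lemma:equivalent} (so $\deg f\le g+1$ always, and $|\wn(\tilde f)|\le\deg(f)-2\le g-1$ once $g>0$) and Lemma~\ref{lemma:degree} (so $\wn(\tilde f)\equiv\deg(f)\equiv g+1\pmod 2$, using \eqref{eq:degreef} with $\deg\greatest(\B_a)$ of the right parity — actually one gets $\wn(\tilde f)\equiv g+1-\deg\greatest(\B_a)$, and the symmetry of roots of $\greatest(\B_a)$ under inversion together with the absence of $S^1$-roots forces $\deg\greatest(\B_a)$ even, hence $\wn(\tilde f)\equiv g+1\equiv g-1\pmod 2$), we see $\wn(\tilde f)\in\{-(g-1),-(g-3),\dots,g-1\}$. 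So the listed union is exactly $\Hm^g\setminus\Sm^g$.

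For openness of $V_j$: the winding number $\wn(\tilde f)=\deg(\tilde f\colon S^1\to S^1)$ is locally constant in $a$ as long as $\tilde f$ has no zeros or poles on $S^1$, i.e. as long as $b_0$ and $b_\infty$ have no roots on $S^1$; since the roots of $b_0$ on $S^1$ are exactly the common roots of $b_1,b_2$ on $S^1$ (the defining relation $b_0=b_2-ib_1$ with the normalisation $b_1(0)=1,b_2(0)=i$, and the reality argument from the proof of Lemma~\ref{lemma:degree}), this is precisely the condition $a\notin\Sm^g$. As $\B^g$ is a continuous (indeed real-analytic) sub-bundle and the $b_i$ depend continuously on $a$, the map $a\mapsto\tilde f|_{S^1}$ is continuous into $C^0(S^1,S^1)$ on $\Hm^g\setminus\Sm^g$, so $\wn$ is locally constant there; hence each $V_j$ is open.

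Non-emptiness is the main obstacle, and here I would argue by induction on $g$ using Lemmas~\ref{lemma:polynomialperturbation} and~\ref{lemma:induction}. The base case $g=1$: by Theorem~\ref{lemma:equivalent}, $g=1$ forces $\deg(f)=2$ and (since $g>0$) $|\wn(\tilde f)|\le 0$, so $\wn(\tilde f)=0$ identically, giving $\Hm^1\setminus\Sm^1=V_0$, which is non-empty because $\Hm^1\setminus\Sm^1$ is non-empty (generic genus-one spectral curves do not lie in $\Sm^1$ — indeed $\Sm^1=\Rm^1=\emptyset$ as remarked after the theorem, so $\Hm^1\setminus\Sm^1=\Hm^1\ne\emptyset$). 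For the inductive step, assume each $V_j$ in genus $g$ is non-empty and pick $a\in V_j\subseteq\Hm^g\setminus\Sm^g$; since $a\notin\Sm^g\supseteq\Rm^g$, $f$ has no common roots on $S^1$ and $df$ has a root $\alpha\in S^1$ away from which we may also choose $\alpha$ with $df(\alpha)\neq 0$ — the point is to find $\alpha\in S^1$ with $df(\alpha)\neq 0$, which exists since $df$ cannot vanish identically on $S^1$ (that would force $f$ constant on an arc, hence $f$ constant, contradicting $\deg f\ge 2$ for $g>0$). Applying Lemma~\ref{lemma:induction} at such $\alpha$ produces $a_t\in\Hm^{g+1}\setminus\Rm^{g+1}$ with $\deg(f_t)=\deg(f)+1$; one must check $a_t\notin\Sm^{g+1}$ (not merely $\notin\Rm^{g+1}$) — this follows because $\greatest(\B_{a_t})$ divides $\greatest(\B_a)$ pulled back under the normalisation $\iota_0$ of Lemma~\ref{lemma:polynomialperturbation}, up to a factor supported away from $S^1$, so no new $S^1$-roots are created. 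Finally one computes $\wn(\tilde f_t)$: since $df_t$ gains exactly two new roots near $\alpha\in S^1$, and crossing $\alpha$ the map $\tilde f_t\colon S^1\to S^1$ either gains or loses one unit of winding depending on the sign with which the two new branches attach, one obtains $\wn(\tilde f_t)=\wn(\tilde f)\pm 1=j\pm 1$. Doing this at points $\alpha$ on both sides — equivalently, using that the two signs are both realisable by choosing $\alpha$ near a point where the local behaviour of $f$ on $S^1$ has the appropriate orientation, or simply by the parity/range constraint forcing both $j-1$ and $j+1$ (when in range) to be hit as $j$ ranges over $V_{1-g},\dots,V_{g-1}$ in genus $g$ — shows every $V_{j'}$ with $j'\in\{-g,\dots,g\}$, $j'\equiv g\pmod 2$, i.e. every $V_{j'}$ in the genus-$(g+1)$ list, is non-empty. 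The delicate point, and where I expect to spend the most care, is verifying that \emph{both} signs $\pm 1$ occur — i.e. that the induction reaches the extreme values $\pm(g-1)$ as well as the middle ones; this should come from choosing $\alpha$ near the two endpoints of an arc of $S^1$ on which $\tilde f$ is monotone versus an arc on which it reverses, so that the new sheet attaches with either orientation.
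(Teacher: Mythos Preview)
Your overall strategy matches the paper's: openness via local constancy of the winding number, the range via parity and Theorem~\ref{lemma:equivalent}, and non-emptiness by induction on $g$ using Lemmas~\ref{lemma:polynomialperturbation} and~\ref{lemma:induction}. However there are two genuine gaps.

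First, a small but real error: you write $\Sm^g\supseteq\Rm^g$, but in fact $\Sm^g\subseteq\Rm^g$ (a common root on $S^1$ is in particular a common root). So $a\notin\Sm^g$ does \emph{not} give $a\notin\Rm^g$, yet Lemma~\ref{lemma:induction} requires $a\in\Hm^g\setminus\Rm^g$. This is easily repaired: since $V_j$ is open and $\Rm^g$ is a proper real subvariety, $V_j\setminus\Rm^g$ is non-empty once $V_j$ is, so choose $a$ there.

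Second, and this is the point you yourself flag as ``delicate'': your induction hypothesis ``$V_j\neq\emptyset$'' is too weak to guarantee that both signs $\pm1$ are achievable. The paper computes explicitly that
\[
\wn(\tilde f_t)=\wn(\tilde f)-\sign\bigl(d\tilde f(\alpha)\bigr),
\]
so to realise both $j+1$ and $j-1$ one needs points $\alpha\in S^1$ where $d\tilde f$ takes \emph{each} sign, i.e.\ $d\tilde f$ must change sign on $S^1$. Nothing in ``$a\in V_j\setminus\Rm^g$'' forces this; in particular your heuristic about ``arcs where $\tilde f$ reverses'' presupposes exactly what must be proved. The paper's remedy is to strengthen the induction hypothesis: for each admissible $j$ there exists $a\notin\Rm^g$ with $\wn(\tilde f)=j$ such that $d\tilde f$ has a \emph{simple} root on $S^1$ (hence changes sign there) and has \emph{no double} roots on $S^1$. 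The base case $g=1$ is checked via Riemann--Hurwitz (the two roots of $df$ are simple). For the inductive step one must then also verify that the deformed $a_t$ again satisfies these extra conditions: the two new roots of $df_t$ near $\alpha$ are simple and on $S^1$, the remaining roots stay simple for small $t$, and the sign change persists. Without carrying this stronger statement through the induction, the argument at the extreme values $j=\pm(g-1)$ (where one must move \emph{outward}) does not close.
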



\begin{proof}
 The continuous discrete valued function $ \wn(\tilde f) $ is  locally constant, so its level sets are open. If $ b_1 $ and $ b_2 $ have no common roots on $ S^1 $, they can only have pairs of common roots which are interchanged by $ \lambda\mapsto\bar\lambda^{-1} $. Using \eqref{eq:degreef}, this implies that $ \deg (f) \equiv (g +1)\;\mathrm{mod}\; 2 $ and then by Lemma ~\ref{lemma:degree} we have that $ \wn(\tilde f) \equiv ( g +1)\;\mathrm{mod}\; 2 $. By Lemma~\ref{lemma:degree} and Theorem~\ref{lemma:equivalent}, the values listed for $ \wn(\tilde f) $ are clearly its maximal range. We proceed to argue that these values are in fact attained.

We use induction on the genus to prove a slightly stronger statement, namely that for each $ g\geq 1 $, there exist $ a\notin\Rm^g $ for which these values of $ \wn( \tilde f) $ are attained and furthermore the corresponding $ d\tilde f $ has a simple root (and hence changes sign) somewhere on the unit circle but has no double roots on the unit circle. For $ g = 1 $, we take any $ a\neq\R^1 $ and then $ \deg (f) =  2 $ so by Lemma~\ref{lemma:degree} the only possible value for $ \wn(\tilde f) $ is zero and then the differential of $ \tilde f:S^1\to S^1 $ necessarily changes signs and hence has an odd order root. But by the Riemann-Hurwitz formula, $ d f $ has only two roots, counting multiplicity, and so they must both be simple. Since $ d f $ is complex-linear, for $ \lambda\in S^1 $ the differential is $ d\tilde f_\lambda $ and $ d f_\lambda $ have the same roots and root multiplicities. Hence any $ a\notin\R^1 $ satisfies the statement which is to be proven by induction.

Take $ a\notin\Rm^g $ satisfying the induction assumption and choose $ \alpha = e^{i\theta}\in S^1 $ at which $ d f (\alpha)\neq 0 $. For some $ t $, let $ \lambda_{1 t},\lambda_{2 t} $ be the additional roots of $ d f_t $ on the unit circle guaranteed by Lemma ~\ref{lemma:induction}. For each $ \omega\in S^1 $, there is at least one more point in $ f_t^{-1} (\omega) $ than in $ f^{- 1} (\omega) $ but the reality condition guarantees that these points occur in pairs related by involution in the unit circle. Lemma~\ref{lemma:induction} guarantees that for $ t $ sufficiently small, $ \lambda_{1 t} $ and $ \lambda_{2 t} $ are simple roots of $ d f_t $ and hence $ \tilde f_t $ changes direction at these points, as illustrated in Figure~\ref{figure:change}.

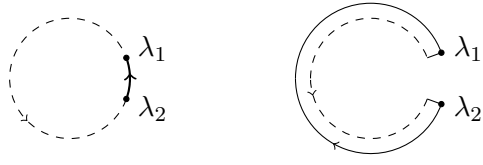
\begin{figure} [h]
\begin{tikzpicture}
\draw[dashed,
decoration={markings, mark=at position 0.625 with {\arrow{>}}},
        postaction={decorate}
](0,0) circle (0.8cm);

\draw[decoration={markings, mark=at position 0.625 with {\arrow{>}}},
        postaction={decorate}
,thick](0.7518,-0.2736) arc (-20:20: 0.8cm);

\draw[decoration={markings, mark=at position 0.625 with {\arrow{>}}},
        postaction={decorate}
](4.9397,-0.342) arc (-20:-180: 1cm);
\draw(3,0) arc (180:20: 1cm);
\draw[decoration={markings, mark=at position 0.925 with {\arrow{<}}},
        postaction={decorate}
,dashed](4.7518,-0.2736) arc (-20:-180:0.8cm);
\draw[dashed](3.2,0) arc (180:20:0.8cm);
\draw(4.9397,-0.342)--(4.7518,-0.2736);
\draw(4.9397,0.342)--(4.7518,0.2736);
\draw(1.1,0.4)node{ $ \lambda_1 $ };
\filldraw(0.7518,-0.2736)circle(1pt);
\filldraw(0.7518,0.2736)circle(1pt);
\draw(1.1,-0.4)node{ $ \lambda_2 $ };
\draw(5.3,0.4)node{ $ \lambda_1 $ };
\filldraw(4.9397,-0.342)circle(1pt);
\filldraw(4.9397,0.342)circle(1pt);
\draw(5.3,-0.4)node{ $ \lambda_2 $ };
\end{tikzpicture}
\caption {Change of direction for $ \tilde f_t $ when $ d \tilde f (\alpha ) >0 $ }
\label{figure:change}
\end{figure}
Furthermore, the roots of $ d  f_t $ aside from $ \lambda_{1 t} $ and $ \lambda_{2 t} $ are nearby the roots of $ d  f $ and we assumed that the latter were simple.  Shrinking $ \epsilon >0 $ so that for $ | t| <\epsilon $ the differential $ d\tilde f_t $ also has no multiple roots then by the reality condition, for each root $ \beta_j $ of $ d\tilde f $ there is a nearby corresponding root $ \beta_{j t} $ of $ d\tilde f_t $ (varying real-analytically in $ t $ ) and these, together with $ \lambda_{1 t} $ and $ \lambda_{2 t} $ are the only roots of $ d\tilde f_t $. Hence, writing the winding number as 
\[
\wn ( \tilde f_t) =\frac{1}{2\pi i}\int_{S^1 } d\tilde f_t,
\]
 given $ \gamma >0 $, we can choose $ \epsilon,\delta >0 $ so that for $ | t | <\epsilon $, we have
\[
\left|\frac{1}{2\pi i}\int_{e^{i (\theta +\delta)}}^{e^{i (\theta -\delta +2\pi)}} (d  \tilde f - d \tilde f_t) d\theta\right| < {\gamma} 
\]
and 
\[
\left |\frac{1 }{2\pi i}\int_{e^{i (\theta -\delta)}}^{e^{i (\theta +\delta)}} (d\tilde f - d\tilde f_t) d\theta +\sign (d \tilde f (\alpha)) 1\right | <\gamma. 
\]
Thus 
\[
\wn (\tilde f_t) =\wn (\tilde f) - \sign (d\tilde f (\alpha)) 1 
\]
and by the induction assumption both choices of sign are available. Furthermore our construction guarantees that $ d\tilde f_t $ changes sign somewhere on the unit circle and has no multiple roots. Shrinking $ \epsilon $ if necessary, we can also guarantee $ a_t\notin\R^{g +1} $.
\end{proof}

\section{Whitham Deformations and the Fermi Curve}\label{sec:whitham}
Whitham deformations will  play an important role in the remainder of this work. In this short section we introduce Whitham deformations and the Fermi curve for later reference.

 Suppose we are given a tangent vector $ (\dot{a}, \dot{b}_1,\dot{b}_2)\in T_{(a, b_1, b_2)}\Frame^g $, which infinitesimally preserves the periods of $ \Theta_{b_1},\Theta_{b_2} $. We represent the tangent vector by a path $ (a _t, b_{1 t}, b_{2 t}) $ passing through the given point when $ t = 0 $.

For $ k = 1, 2 $, the meromorphic differential forms $ \left.\frac{d}{d t }\right|_{t = 0}\Theta_{b_k} $ have vanishing periods and no residues and hence there are meromorphic functions $ \dot{q}_k $ on $ X_a $ such that 
\begin{equation}\label{equation:deformation}
 d\dot{q}_k =
\left.\frac{d}{d t }\right|_{t = 0}\Theta_{b_k} 
\end{equation}
and we may write 
\begin{equation}\label{equation:cdefinition}
\dot{q}_k = \frac{i c_k (\lambda)}{y },
\end{equation}
with $ c_k\in P^{g+1}_\R $.
Then  \eqref{equation:deformation} gives the  \emph{Whitham equation}
\[
\frac{\partial}{\partial\lambda}\frac{i c_k (\lambda)}{y} =\left.\frac{\partial}{\partial t}\frac{b_k (\lambda)}{y\lambda}\right|_{t = 0}
\]
which expands to 
\begin{align}
(2\lambda a c_1' - a c_1 -\lambda a' c_1 ) i & = 2 a\dot{b}_1 -\dot{a}b_1,\label{equation:derivative1}\\
(2\lambda ac'_2 - ac_2 -\lambda a' c_2  ) i & = 2a\dot{b}_2 -\dot{a}b_2,\label{equation:derivative2}
\end{align}
where a dot denotes the derivative with respect to $ t $, evaluated at $ t = 0 $, whilst a prime means the derivative with respect to $ \lambda $.

The compatibility of \eqref{equation:derivative1} and \eqref{equation:derivative2} gives the equation $ c_2 \text { \eqref{equation:derivative1} } - c_1\text { \eqref{equation:derivative2}} $, namely
\begin{equation}\label{equation:compatibility}
2  a\left (c_1' c_2\lambda - c_2' c_1\lambda + c_1\dot{b}_2 - c_1\dot{b}_1\right) = \dot{a}  (c_1 b_2 - c_2 b_1 ).
\end{equation}
This immediately implies that any roots of $ a $ at which $ \dot{a} $ does not vanish are necessarily roots of $ c_1 b_2 - c_2 b_1 $. Furthermore if $ \dot{a} $ vanishes at a root of a, then  \eqref{equation:derivative1} and \eqref{equation:derivative2} imply that also $ c_1 $ and $ c_2 $ vanish at this root of $ a $. Hence $ c_1b_2-c_2b_1 $ vanishes at all the roots of $ a $, and we may conclude that
\begin{equation}\label{equation:c}
c_1b_2 - c_2b_1 = Q  a
\end{equation}
with $ Q\in P^2_\R $. We may consider $ Q $ as a 1-form on $ \Frame^g $ taking values in the space $ P_\R^2 $.
Comparing \eqref{equation:compatibility} and \eqref{equation:c} we observe that
\begin{equation}\label{equation:derivative3}
2\left (c_1' c_2\lambda - c_2' c_1\lambda + c_1\dot{b}_2 - c_2\dot{b}_1\right) =\dot{a} Q.
\end{equation}
We remark also that \eqref{equation:c} can also be expressed as
\begin{equation}
\dot{q}_1dq_2 -\dot{q}_2dq_1 =\frac{Q (\lambda) d\lambda}{\lambda^2}\label{eq:Q}.
\end{equation}

We have seen how, given a tangent vector $ (\dot{a},\dot{b}_1,\dot{b}_2)\in T_{(a, b_1, b_2)}\Frame^g $ such that infinitesimal  deformations along $ (\dot{a},\dot{b}_1,\dot{b}_2) $ leave the periods of $ \Theta_{b_1} $ and $ \Theta_{b_2} $ unchanged (that is, an infinitesimal Whitham deformation), one obtains first $ c_1, c_2\in P_\R^{g +1} $ satisfying \eqref{equation:derivative1} and \eqref{equation:derivative2}  and then a  quadratic polynomial $ Q(\dot{a},\dot{b}_1,\dot{b}_2)\in P^2_\R $ satisfying \eqref{equation:c}. Conversely, one would like to reverse the process to obtain an infinitesimal Whitham deformation $ (\dot{a},\dot{b}_1,\dot{b}_2) $ from $ (a, b_1, b_2)\in\Frame^g $ and $ Q\in P^2_\R $. That is,
\begin{enumerate}
\item given $ (a, b_1, b_2)\in\Frame^g $ together with $ Q\in P^2_\R $, solve  \eqref{equation:c} for  polynomials $ c_1, c_2\in P^{g+1}_\R $,
\item given $ (a, b_1, b_2, Q, c_1, c_2) $ as above satisfying \eqref{equation:c}, solve  the system  \eqref{equation:derivative1},  \eqref{equation:derivative2} for $ (\dot{a},\dot{b}_1,\dot{b}_2)\in T_{(a, b_1, b_2)}\Frame^g $.
\end{enumerate}
 B\'{e}zout's Lemma tells us that  \eqref {equation:c} may be solved if and only if $\greatest (\B_a) $ divides $ a Q $. We are only interested in solutions $ (c_1, c_2) $ to \eqref {equation:c} which allow  \eqref {equation:derivative1} and \eqref  {equation:derivative2} to be solved and which are of degree $ g +1 $, real with respect to $\rho $, as in \eqref {equation:cdefinition}.
\begin{lemma}\label{remark:greatest}
\begin{enumerate}
\item[(i)]
Given $ (a, b_1, b_2)\in\Frame^g $ and $ Q\in P^2_\R $, there exist solutions $ (c_1, c_2, \dot{a}, \dot{b}_1, \dot{b}_2) $ of \eqref{equation:c},  \eqref{equation:derivative1},  \eqref{equation:derivative2} only if $ \greatest (\B_a) $ divides $Q$.
\item[(ii)]
Given $(a, b_1, c_1)$ and $(a, b_2, c_2)$ equations \eqref{equation:derivative1} and \eqref{equation:derivative2} may be solved if and only if $ \greatest (a, b_1) $ divides $ c_1 $ and $ \greatest (a, b_2) $ divides $ c_2 $, respectively.
\item[(iii)]\label{group c} Given $(a, b_1, b_2, c_1, c_2)$ the system \eqref{equation:derivative1}-\eqref{equation:derivative2} has a unique solution if $ \greatest (a, b_1, b_2) = 1 $.
\end{enumerate}
\end{lemma}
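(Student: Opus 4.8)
The plan is to analyze the linear system \eqref{equation:derivative1}--\eqref{equation:derivative2} directly, treating each equation separately since they are decoupled once $c_1,c_2$ are given. Rewrite \eqref{equation:derivative1} as
\[
2a\dot b_1 - \dot a b_1 = i\left(2\lambda a c_1' - a c_1 - \lambda a' c_1\right)=:R_1,
\]
where $R_1\in P^{2g+1}$ is a fixed polynomial determined by $(a,b_1,c_1)$. So the task is: given $R_1$, solve $2a\dot b_1 - \dot a b_1 = R_1$ for $\dot a\in P^{2g}_\R$ and $\dot b_1\in P^{g+1}_\R$. This is an inhomogeneous linear equation; the associated homogeneous equation $2a\dot b_1 = \dot a b_1$ forces $b_1 \mid 2 a \dot b_1$, so $\frac{b_1}{\greatest(a,b_1)}$ divides $\dot a$, and comparing degrees shows the homogeneous solution space is spanned by $(\dot a, \dot b_1)=\bigl(\tfrac{2a}{\greatest(a,b_1)}\cdot h,\tfrac{b_1}{\greatest(a,b_1)}\cdot h\bigr)$ for $h$ ranging over polynomials of the appropriate (small) degree, the reality condition cutting this down further. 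For part (ii), I would show existence of a solution is governed by a B\'ezout/resultant criterion: writing $g_1=\greatest(a,b_1)$, $a = g_1 \hat a$, $b_1 = g_1\hat b_1$ with $\greatest(\hat a,\hat b_1)=1$, the equation becomes $g_1(2\hat a\dot b_1 - \dot a\hat b_1)=R_1$, which is solvable only if $g_1\mid R_1$; conversely when $g_1\mid R_1$ one solves $2\hat a\dot b_1 - \dot a\hat b_1 = R_1/g_1$ using that $\hat a,\hat b_1$ are coprime (Bézout). The reality of the solution follows because $R_1$ is real with respect to $\rho$ (being built from real data via the real operations in the Whitham equation) and the solution space is $\rho$-stable, so one can average. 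I should then translate ``$g_1\mid R_1$'' back into ``$\greatest(a,b_1)$ divides $c_1$'': since $R_1 = i(2\lambda a c_1' - ac_1 - \lambda a'c_1)$ and $g_1\mid a$, we get $g_1 \mid R_1 + i(a c_1 + \lambda a' c_1) = 2i\lambda a c_1'$ automatically, so $g_1\mid R_1 \iff g_1 \mid (ac_1 + \lambda a'c_1)$; writing $a = g_1\hat a$ and differentiating, $g_1 \mid g_1'\hat a c_1$-type terms cancel and one is left needing $g_1 \mid \hat a c_1 \cdot(\text{something coprime to }g_1)$, hence $g_1\mid c_1$ after checking $\greatest(g_1,\hat a)$ and the $\lambda$-factor contribute nothing (the roots of $g_1$ are nonzero and distinct since $X_a$ is smooth). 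This divisibility bookkeeping — correctly showing the condition collapses exactly to $\greatest(a,b_1)\mid c_1$ — is the step I expect to require the most care, and where smoothness of $X_a$ (distinctness of roots, so $\greatest(a,b_1)$ is squarefree and coprime to $\lambda$) is essential.

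For part (iii), assume $\greatest(a,b_1,b_2)=1$. By (ii) applied to each equation (using that the hypotheses of (iii) presuppose $c_1,c_2$ arise from a genuine Whitham deformation, so the divisibility conditions $\greatest(a,b_1)\mid c_1$ and $\greatest(a,b_2)\mid c_2$ hold), solutions $(\dot a,\dot b_1)$ and $(\dot a,\dot b_2)$ exist for each equation separately, but I must produce a \emph{common} $\dot a$. Here I would use \eqref{equation:c}: since $c_1 b_2 - c_2 b_1 = Qa$, the two equations are compatible and a simultaneous solution exists; uniqueness then follows from the homogeneous analysis above. Concretely, the homogeneous solutions of \eqref{equation:derivative1} have $\dot a$ a multiple of $\tfrac{2a}{\greatest(a,b_1)}$ and those of \eqref{equation:derivative2} have $\dot a$ a multiple of $\tfrac{2a}{\greatest(a,b_2)}$; a common homogeneous solution has $\dot a$ divisible by $\operatorname{lcm}$ of these, i.e.\ by $\tfrac{2a}{\greatest(a,b_1)}$ and $\tfrac{2a}{\greatest(a,b_2)}$ simultaneously, and a degree count against $\deg\dot a\le 2g$ using $\greatest(a,b_1,b_2)=1$ (so the two divisors are ``spread out'' enough) forces $\dot a=0$ and hence $\dot b_1=\dot b_2=0$. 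I would carry out the steps in the order: (1) set up the single inhomogeneous equation and describe its homogeneous solution space; (2) prove the solvability criterion of (ii) via Bézout plus a reality-averaging argument; (3) do the divisibility translation to $\greatest(a,b_i)\mid c_i$; (4) deduce (i) by noting that if $(c_1,c_2)$ solve everything then \eqref{equation:c} plus (ii) force $\greatest(a,b_1)\mid c_1$, $\greatest(a,b_2)\mid c_2$, and then Bézout's Lemma (already quoted in the text) gives $\greatest(\B_a)=\greatest(a,b_1,b_2)\mid Q$; (5) deduce (iii) from (ii) and the uniqueness count.

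The main obstacle, as noted, is step (3): the passage from the abstract solvability condition ``$\greatest(a,b_i)$ divides the right-hand side $R_i$ of the Whitham equation'' to the clean statement ``$\greatest(a,b_i)$ divides $c_i$''. One must exploit that $\greatest(a,b_i)$ divides $a$ to kill most terms in $R_i = i(2\lambda a c_i' - a c_i - \lambda a' c_i)$, reducing the condition to a statement about $\lambda a' c_i$ modulo $\greatest(a,b_i)$, and then use that $X_a$ is smooth — so $a$ has simple roots and $\greatest(a,b_i)$ is a squarefree divisor of $a$ with $a'$ nonvanishing at each of its roots, all of which are nonzero — to conclude that $\greatest(a,b_i)\mid \lambda a' c_i$ is equivalent to $\greatest(a,b_i)\mid c_i$. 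A secondary point requiring attention is verifying that all the polynomials involved ($R_i$, the homogeneous generators, the Bézout cofactors) can be chosen real with respect to $\rho$; this follows from $\rho$-equivariance of the Whitham equation together with the observation that the solution space of a $\rho$-equivariant linear system over $\R$ is itself $\rho$-stable, so one averages a complex solution with its $\rho$-conjugate.
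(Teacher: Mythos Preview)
Your overall architecture matches the paper's: treat \eqref{equation:derivative1}--\eqref{equation:derivative2} as a B\'ezout problem and use smoothness of $X_a$ (so $\greatest(a,a')=\greatest(a,\lambda)=1$) to reduce ``$\greatest(a,b_k)$ divides the right-hand side'' to ``$\greatest(a,b_k)\mid c_k$''. That part is fine and essentially what the paper does in one line.

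There are, however, two genuine gaps.

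\textbf{Part (i).} Your step (4) asserts $\greatest(\B_a)=\greatest(a,b_1,b_2)$, which is false: $\greatest(\B_a)=\greatest(b_1,b_2)$ can strictly contain $\greatest(a,b_1,b_2)$. B\'ezout applied to \eqref{equation:c} only gives $\greatest(b_1,b_2)\mid aQ$, and the divisibilities $\greatest(a,b_k)\mid c_k$ from (ii) do not by themselves let you cancel the $a$. What is missing is the case split the paper performs: at a root $\lambda_0$ of $\greatest(b_1,b_2)$ that is \emph{not} a root of $a$, \eqref{equation:c} directly gives $Q(\lambda_0)=0$ to the right order; at a root $\lambda_0$ shared with $a$ (necessarily simple, nonzero), your (ii) forces $c_1(\lambda_0)=c_2(\lambda_0)=0$, so the left side of \eqref{equation:c} vanishes to one order higher than $a$ does, and again $Q(\lambda_0)=0$. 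You have the ingredients but not the argument, and your stated equality papers over exactly the case that needs work.

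\textbf{Part (iii), uniqueness.} Your degree count does not force $\dot a=0$. With $g_k=\greatest(a,b_k)$ and $\greatest(g_1,g_2)=\greatest(a,b_1,b_2)=1$, the lcm of $a/g_1$ and $a/g_2$ is $a$ itself, which has degree $2g$; so a common homogeneous solution has $\dot a=\gamma a$ for a real constant $\gamma$, with $\dot b_k=\tfrac{\gamma}{2}b_k$. This one-parameter ambiguity is not killed by degree considerations. The paper eliminates it using the normalisation that the highest coefficient of $a$ has absolute value one, which forces the tangential scaling $\gamma$ to vanish. You need to invoke this normalisation explicitly.
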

\begin{proof}{\bf(i):} 
Suppose the equations \eqref{equation:c},  \eqref{equation:derivative1},  \eqref{equation:derivative2} have a solution $ (c_1, c_2,\dot{a},\dot{b}_1,\dot{b}_2) $ and $ \lambda_0 $ is a root of $ \greatest (\B_a) $ of order $ n $. If it is not a root of $ a $, then by \eqref{equation:c} it must clearly be a root of $ Q $ of order at least $ n $ (and hence $ n\leq 2 $ ). If instead $ \lambda_0 $ is also a root of $ a $ of order $ k $, by the assumption of smoothness of $ X_a $ we have $ k = 1 $ and $ \lambda_0\neq 0 $. The former implies that $ a' $ does not vanish at $ \lambda_0 $ whilst the latter allows us to conclude from  \eqref{equation:derivative1} that for any solution $ c_1 $ has a root at $ \lambda_0 $ and likewise from \eqref{equation:derivative2} that $ c_2 $ has a root at $ \lambda_0 $. Thus the left hand side of \eqref{equation:c} has a root of order at least $ n +1 $ at $ \lambda_0 $, and as $ a $ vanishes there only to first order, the polynomial $ Q $ vanishes at $ \lambda_0 $ to order $ n $.

\noindent{\bf(ii):} 
Clearly $ \greatest (a, b_1) $ divides the first two terms on the left hand side of  \eqref{equation:derivative1}. Since $ \greatest (a, a') = 1 $ and $ \greatest (a,\lambda) = 1 $, the statement is immediate from B\'{e}zout's Lemma.

\noindent{\bf(iii):} 
If $ \lambda_0 $ is a common root of $ a, b_1 $ of order $ n $, then by \eqref{equation:c} and the assumption that $ a, b_1, b_2 $ have no roots in common we must have that $ \lambda_0 $ is a root of $ c_1 $ of order at least $ n $. Hence \eqref{equation:derivative1} and similarly \eqref{equation:derivative2} may be solved, for $ (\dot{a},\dot{b}_1) $ and $ (\dot{a},\dot{b}_2) $ respectively. 

The ambiguity in a mutual solution $ (\dot{a},\dot{b}_1,\dot{b}_2) $ is given by adding $ \frac{p (\lambda)}{\greatest (a, b_1, b_2)} $ for $ p $ a polynomial of degree at most $ \deg (\greatest (a, b_1, b_2)) $ satisfying the appropriate reality condition. Here this means simply that it is a real number. But the normalisation that the highest coefficient of $ a $ has absolute value one means that this real scaling is uniquely determined and hence the solution $ (\dot{a},\dot{b}_1,\dot{b}_2) $ is unique.
\end{proof}
The non-uniqueness of the B\'{e}zout coefficients $ (c_1, c_2) $ is exactly given by
\[
(\tilde c_1,\tilde c_2) = (c_1, c_2) +\frac{p(\lambda)}{\greatest(\B_a)}(b_1,b_2);
\]
  the requirement that $ \tilde c_1,\tilde c_2 $ have degree at most $ g +1 $ means that $ \deg (p)\leq\deg (\greatest (\B_a))\leq 2 $ and the requirement that they are real with respect to $ \rho $ gives $ \deg (\greatest (\B_a)) +1 $ real degrees of freedom. For some applications it will prove useful to be able to choose a solution  that suits our needs (e.g. Theorem~\ref{theorem:degreeassumption}), at other times there are natural additional assumptions that guarantee uniqueness (e.g. Theorem~\ref{theorem:Euclideanisomorphism}). 
 We may define an analytic one-dimensional variety, which we call the Fermi curve, whose normalisation is the spectral curve $ X_a $. Notice  that the locally defined map $ (q_1, q_2) $ is not a local biholomorphism from $ X_a $ to its image in $ \C^2 $ at points where $ \greatest (\B_a) $ has zeroes. The Fermi curve will have the property that the analogous locally defined map $ (q_1, q_2) $ is a local biholomorphism from the Fermi curve onto its image in $\C ^ 2 $. Usually a Fermi curve has a globally defined such map $ (q_1, q_2) $; in other words it can be realised as an analytic subvariety of the complex plane. We use here a more general notion of a Fermi curve, whose structure as a complex analytic space is only locally given by $ (q_1, q_2) $. Set 
\[
U: = X_a\setminus\{x_1, \ldots , x_m\} 
\]
where the points $ x_\beta $ are the pre-images under $ \lambda $ of the roots of $ \greatest (\B_a) $. For each $ \beta $, take a choice of local functions $ ( q_1^\beta, q_2^\beta) $ defined on a neighbourhood $ U_\beta $ of $ x_\beta $, chosen so that $ U_\beta\cap U_\gamma $ and $ V_\beta\cap V_\gamma $ are empty for $ \beta\neq\gamma $, where $ V_\beta= (q_1, q_2) (U_\beta) $. Then the Fermi curve is defined as 
\[
\left (U\cup\bigcup_\beta V_\beta\right)/\sim
\text {, where } x\sim z_\beta\text { for } x\in U, z_\beta\in V_\beta
\text { when } (q_1, q_2) (x) = z_\beta.
\]
Locally on simply connected subsets $ U_\beta $ of the spectral curve $ X_a $, there are meromorphic functions $ q_1 $ and $ q_2 $
such that $ dq_k =\Theta_{b_k} $. As in the proof of \cite[Theorem 2]{CS:12}, the differential $ \dot{q}_1dq_2 -\dot{q}_2dq_1 $ is globally defined and has the form $ \frac{Q (\lambda) d\lambda}{\lambda^2} $ where $ Q $ is a polynomial of degree two. For completeness, we give an alternative argument for this important fact.

The image of each $ U_\beta $ under the map $ (q_1, q_2) $ is the zero set of a holomorphic function $ (q_1, q_2)\mapsto R_\beta (q_1, q_2) $ on an open subset of $ \C \P^1\times\C\P^1 $. Furthermore, every point $ x\in X_a $ is contained in such an open neighbourhood $ U_\beta\subseteq X_a $, such that $ q_1:U_\beta\to V_\beta $ is a proper $ l $ -sheeted covering onto an open subset $ V_\beta\subseteq \C \P^1 $. Here $ l-1 $ is the vanishing order of $ dq_1 $ at the point $ x $. Due to \cite[Theorem~8.3]{Forster:81} there exists a unique choice of $ R_\beta $ such that $ q_2\mapsto R_\beta(q_1,q_2) $ is a polynomial of degree $ l $ with highest coefficient one, whose coefficients are meromorphic functions depending on $ q_1\in V_\beta $. For smooth families $ a_t\in\mathcal{H}^g $, these functions $ R_\beta (q_1, q_2) $ depend smoothly on $ a_t\in\mathcal{H}^g $. Hence the $ \frac{\partial R_\beta}{\partial t}  (q_1, q_2) $ are also holomorphic functions on open subsets of $ \C \P^1\times\C \P^1 $. We have that
\[
0 =\frac{d}{dt} R_\beta (q_1, q_2) =\frac{\partial R_\beta}{\partial t} (q_1, q_2) +\frac{d  q_1}{dt}\frac{\partial R_\beta }{\partial q_1}(q_1, q_2) +\frac{d  q_2}{dt}\frac{\partial R_\beta }{\partial q_2}(q_1,q_2)
\]
and for fixed $ t $,
\[
0 =\frac{\partial R_\beta}{\partial q_1} (q_1, q_2) dq_1+\frac{\partial R_\beta}{\partial q_2} (q_1, q_2) dq_2.
\]
It follows that
\begin{equation}\label{cancel}
\dot{q}_1dq_2 -\dot{q}_2dq_1
=\frac{-\frac{\partial R_\beta}{\partial t} (q_1, q_2)}{\frac{\partial R_\beta}{\partial q_1} (q_1, q_2)} dq_2=\frac{\frac{\partial R_\beta}{\partial t} (q_1, q_2)}{\frac{\partial R_\beta}{\partial q_2} (q_1, q_2)} dq_1.
\end{equation}

This equation shows that the meromorphic global differential form $ \dot{q}_1dq_2 -\dot{q}_2dq_1 $ is holomorphic away from $ x_0 $ and $ x_\infty $. Furthermore, it is invariant with respect to the hyperelliptic involution $ \sigma(\lambda,y)=(\lambda,-y) $, as $\sigma^\ast q_1=-q_1+\text{const.}$ and $\sigma^\ast q_2=-q_2+\text{const.}$ Since this form has at most third order poles at $ x_0 $ and $ x_\infty $, we conclude that it may be written as in \eqref{eq:Q}.

\section{Closure of Periodic Solutions}\label{sec:closure}

Our main result will be to give a number of equivalent, sufficient conditions for $ a\in\mathcal{H}^g $ to belong to the closure of the set $ \Pm^g (\lambda_0) $ of $ a\in\mathcal{H}^g $ such that $ X_a $ is the spectral curve of a CMC torus in $ \R^3 $. Since the Sym point lies on the unit circle, this closure is contained in the set $ \Sm^g \subseteq \Rm^g $.
 For $ g >2 $, the subvariety $ \Rm^g $ has an interesting structure as it may possess smooth points of different dimensions. In particular, we will see that  points where $ \Rm^g $ has maximal dimension, that is 

{\bf(A)} $ a\in\Rm^g $ with $ \dim_a\Rm^g = 2 g -1 $,\\ 
are contained in the closure of $ \Pm^g (\lambda_0) $. We will characterise the points where $ \Rm^g $ has maximal dimension in various ways. They are precisely the points where a small deformation can either raise or lower the winding number

{\bf(B)} $ a\in\Rm^g $ which belong to the closure of at least two different $ V_j $, $ j = 1 - g, 3 - g, \ldots , g -3, g -1 $.

Alternatively, these are points where by an arbitrarily small deformation we can obtain that the greatest common divisor of $ \B_a $ vanishes only at the Sym point

{\bf(C)} $ a\in\Rm^g $ belonging  to the closure of $ \{\tilde a\in\mathcal{H}^g \mid\deg (\greatest  (\B_{\tilde a})) = 1\} $.

As observed above, these points belong to $ \Sm^g $ and hence to some subvariety $ \Sm^g_{\lambda_0} $. Our final characterisation of points where $ \Rm^g $ has maximal dimension is

{\bf(D)} there exists $ \lambda_0\in S^1 $ such that $ a\in S^g_{\lambda_0} $ with $ \dim_a S^g_{\lambda_0} = 2g -2 $.

\vspace{10pt}

We shall demonstrate the equivalence of the four conditions listed above and then proceed to show that they force $ a $ to lie in the closure of $ \Pm^g $.
\begin{center}
{\bf(B)$ \Rightarrow$(A)}
\end{center}

\begin{lemma}\label{lemma:codimension}
Let $ R $ be a real subvariety of $ \R^n $ whose complement is a union of finitely many disjoint non-empty open sets $ \tilde V_j $. Suppose $ p\in R $ is a smooth point belonging to the closure of at least two of these sets $ \tilde V_j $. 
Then at $ p $, the subvariety $ R $ has codimension one. 
\end{lemma}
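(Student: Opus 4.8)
The plan is to argue by contradiction using the local dimension of $R$ at the smooth point $p$. Suppose $R$ has codimension $\geq 2$ at $p$, i.e.\ $\dim_p R \leq n-2$. Since $p$ is a smooth point, there is an open neighbourhood $N$ of $p$ in $\R^n$ such that $R\cap N$ is a connected embedded submanifold of dimension $d = \dim_p R \leq n-2$. The key topological input is that the complement $N\setminus R$ of a submanifold of codimension $\geq 2$ is connected: one can see this by first shrinking $N$ so that $(N, R\cap N)$ is diffeomorphic to $(\R^n, \R^d\times\{0\})$, and then observing that $\R^n\setminus(\R^d\times\{0\}) \cong \R^d\times(\R^{n-d}\setminus\{0\})$ is connected because $n-d\geq 2$. (More generally one could invoke that removing a closed subset of Hausdorff codimension $\geq 2$ from a connected manifold leaves it connected, but the submanifold case suffices here since $p$ is smooth.)

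Now I would derive the contradiction. By hypothesis $p$ lies in the closure of at least two of the disjoint open sets $\tilde V_j$, say $\tilde V_1$ and $\tilde V_2$. Since $p\in\overline{\tilde V_1}\cap\overline{\tilde V_2}$, the connected open set $N\setminus R$ meets both $\tilde V_1$ and $\tilde V_2$; as the $\tilde V_j$ are open, disjoint and cover $\R^n\setminus R \supseteq N\setminus R$, the sets $\tilde V_j\cap(N\setminus R)$ form a partition of the connected space $N\setminus R$ into (relatively) open pieces, at least two of which are non-empty. This contradicts connectedness of $N\setminus R$. Hence $\dim_p R = n-1$, i.e.\ $R$ has codimension one at $p$, which is the claim. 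One should also note the trivial extreme case $d = n$ (i.e.\ $R\cap N$ open in $\R^n$): then $N\setminus R$ cannot meet $\overline{\tilde V_1}$ at all near $p$ since a neighbourhood of $p$ lies in $R$, so $p$ could not be a limit point of any $\tilde V_j$, contradicting the hypothesis; thus $d\leq n-1$ automatically, and combined with the above $d = n-1$.

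The only real subtlety — and the step I expect to require the most care — is justifying the local normal-form statement that a smooth point of a real subvariety has a neighbourhood in which $R$ is an embedded submanifold diffeomorphic to a coordinate subspace, together with the ensuing claim that its complement in that neighbourhood is connected. This is where the definition of ``smooth point'' and ``subvariety'' in the ambient real-analytic/semialgebraic category must be pinned down; once one has that $R\cap N$ is a genuine $C^1$ (or real-analytic) submanifold of codimension $\geq 2$, the connectedness of the complement is standard. Everything else is the elementary partition-of-a-connected-set argument above. In the application $R = \Rm^g$, $n = 2g$, and the $\tilde V_j$ are the images of the sets $V_j$ of Theorem~\ref{theorem:decomposition}, so the lemma will give $\dim_a\Rm^g = 2g-1$, which is condition {\bf(A)}.
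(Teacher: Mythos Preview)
Your argument is correct and takes a genuinely different route from the paper. The paper proceeds by induction on the ambient dimension $n$: for $n\geq 3$ it chooses a hyperplane $H$ through $p$ and points $p_i\in\tilde V_i$, $p_j\in\tilde V_j$, arranged so that $H$ meets $R$ transversely at smooth points near $p$, giving $\dim_q(H\cap R)=\dim_q R-1$ there, and then applies the inductive hypothesis to $H\cap R$ inside $H\simeq\R^{n-1}$. Your approach instead invokes the local normal form at the smooth point $p$ directly, reducing to the elementary fact that $\R^n\setminus(\R^d\times\{0\})$ is connected when $n-d\geq 2$, and then runs the partition-of-a-connected-set argument in one step. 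Your route is shorter and avoids the bookkeeping of choosing a generic hyperplane and tracking transversality; the paper's inductive slicing is more hands-on but requires no explicit appeal to a chart straightening $R$. Both rely on the same underlying input---that at a smooth point the variety is locally a submanifold---and your caveat about pinning down the meaning of ``smooth point of a real subvariety'' is exactly the right place to be careful; in the paper's setting (real-analytic varieties as in \cite{Narasimhan:66}) this is standard.
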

\begin{proof} We use induction on the dimension of the ambient Euclidean space $ \R^n $. For $ n = 1 $ the lemma is clear. For $ n=2 $ the lemma follows since the complement of a discrete set is a single open set. Suppose then that $ n\geq 3 $ and take 
 $ p\in R $ belonging to distinct $ \tilde V_i $ and $ \tilde V_j $. Given $ p_i\in \tilde V_i 
\setminus\{p\}, p_j\in \tilde V_j 
\setminus\{p\} $, there is at least an $ (n -3) $ -dimensional family of hyperplanes in $ \R^n $ passing through $ p, p_i, p_j $. Choosing $ p_i, p_j $ appropriately then there exists a hyperplane $ H $ in $ \R^n $ so that the intersections of $ H $ with $ \tilde V_i $ and $ \tilde V_j $ are non-empty and furthermore that there is an open punctured neighbourhood $ U $ of $ p $ in $ H \cap R $ containing only smooth points of $ R $ and such that for all $ q\in U $, the hyperplane $ H $ intersects $ R $ transversely at $ q $. Hence 
\[
\dimension_q (H  \cap R) =\dimension_q  R -1\text { for all } q\in U.
\] The complement of $ R $ in $ H $ is the union of the disjoint open sets $ H \cap \tilde V_k $ and $ p $ belongs to the closure of $ H  \cap \tilde V_i $ and $ H  \cap  \tilde V_j $, so the lemma now follows from induction on $ n $.
\end{proof}

\begin{corollary}
If $ a\in \Rm^g $ belongs to the closure of at least two different $ V_i $ and $ V_j $, then $ \dim_a\Rm^g = 2g -1 $, or equivalently, at $ a $ the variety $ \Rm^g $ has  codimension one in $ \mathcal{H}^g $.
\end{corollary}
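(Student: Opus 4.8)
The plan is to deduce the corollary from Lemma~\ref{lemma:codimension}, applied to $R=\Rm^g$ regarded as a closed real subvariety of $\Hm^g$, which in turn we view as an open connected subset of $\C^g\cong\R^{2g}$; with this identification the conclusion ``$\Rm^g$ has codimension one at $a$'' is literally the assertion $\dim_a\Rm^g=2g-1$. Recall from Section~2 that $\Rm^g$ is indeed a subvariety, and it is a \emph{proper} one: the strengthened statement proved by induction inside the proof of Theorem~\ref{theorem:decomposition} produces, for each admissible winding number, some $a\notin\Rm^g$. Consequently $\Hm^g\setminus\Rm^g$ is a non-empty open set, contained in $\Hm^g\setminus\Sm^g$ since $\Sm^g\subseteq\Rm^g$, and on it the winding number $\wn(\tilde f)$ is defined and locally constant.

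First I would record the decomposition of the complement that Lemma~\ref{lemma:codimension} demands. For $j=1-g,3-g,\ldots,g-3,g-1$ put $W_j:=\{a\in\Hm^g\setminus\Rm^g\mid \wn(\tilde f)=j\}$. These are pairwise disjoint, open (they are the level sets of a locally constant function on the open set $\Hm^g\setminus\Rm^g$), and by Lemma~\ref{lemma:degree} together with Theorem~\ref{lemma:equivalent} their union is all of $\Hm^g\setminus\Rm^g$; each $W_j$ is non-empty, again by the strengthened statement in the proof of Theorem~\ref{theorem:decomposition}. Hence $\Hm^g\setminus\Rm^g=W_{1-g}\sqcup\cdots\sqcup W_{g-1}$ meets the hypotheses of Lemma~\ref{lemma:codimension}.

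The one point that is not purely formal --- and the step I expect to be the crux --- is that the corollary's hypothesis is phrased in terms of the sets $V_j\subseteq\Hm^g\setminus\Sm^g$, whereas the lemma wants the pieces $W_j$ of $\Hm^g\setminus\Rm^g$, and in general $W_j\subsetneq V_j$ (a common root of $\B_a$ off the unit circle comes paired with its image under $\lambda\mapsto\bar\lambda^{-1}$, so such an $a$ lies in $\Rm^g\setminus\Sm^g$). I claim nonetheless that $\overline{V_j}=\overline{W_j}$ for every $j$. The inclusion $\overline{W_j}\subseteq\overline{V_j}$ is immediate. For the reverse, take $a'\in V_j$; then $a'\notin\Sm^g$, so, $\Hm^g\setminus\Sm^g$ being open with $\wn(\tilde f)$ locally constant on it, there is a neighbourhood $N$ of $a'$ on which $\wn(\tilde f)\equiv j$. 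Since $\Rm^g$ is a proper subvariety of the connected manifold $\Hm^g$ its complement is dense, so $N\setminus\Rm^g$ is dense in $N$; as $N\setminus\Rm^g\subseteq W_j$ we conclude $a'\in\overline{W_j}$, and hence $V_j\subseteq\overline{W_j}$.

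With this in hand the corollary is immediate: if $a\in\Rm^g$ lies in the closure of two distinct $V_i$ and $V_j$, then $a\in\overline{W_i}\cap\overline{W_j}$ with $i\neq j$, so Lemma~\ref{lemma:codimension} (applied with the decomposition above) yields that $\Rm^g$ has codimension one at $a$, i.e. $\dim_a\Rm^g=2g-1$. The only caveat is the smoothness assumption in Lemma~\ref{lemma:codimension}: at a smooth point $a$ this is exactly the claim, which is the case relevant to the equivalences (A)--(D), while for a singular $a$ one observes that the slicing argument in the proof of Lemma~\ref{lemma:codimension} in fact only uses smoothness of $R$ at the nearby points cut out on the auxiliary hyperplane.
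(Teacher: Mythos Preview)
Your proof is correct and follows essentially the same route as the paper: both pass from the $V_j$ to $\tilde V_j := V_j\cap(\Hm^g\setminus\Rm^g)$ (your $W_j$) and then invoke Lemma~\ref{lemma:codimension}, treating the singular case separately. You are in fact more careful than the paper on one point---you explicitly justify $\overline{V_j}=\overline{W_j}$ using density of the complement of the proper subvariety $\Rm^g$, a step the paper leaves implicit---while for singular $a$ the paper appeals to density of smooth points and upper semicontinuity of dimension rather than re-examining the lemma's proof as you do.
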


\begin{proof} Defining
\[
\tilde V_j = V_j\cap (\mathcal{H}^g\setminus\Rm^g),
\]
since $ \mathcal{H}^g $ is open in $ \R^{2 g} $, by the previous lemma we need only consider the case when $ a $ is a singular point of $ \Rm^g $. But the smooth points of a real subvariety are open and dense  \cite[Chapter III, Theorem 1, p41]{Narasimhan:66} and  its dimension at a singular point is at least that at nearby smooth points \cite[Chapter V, Corollary following Proposition 3, page 93]{Narasimhan:66}, so since $ \Rm^g $ is a proper subvariety the corollary is proven.
\end{proof}

The implication {\bf(B)$ \Rightarrow $(A)} is then proven.

\begin{center}
{\bf(A)$ \Rightarrow $(C)}
\end{center}
\begin{lemma}\label{lemma:rotations}
The rotations $\lambda\mapsto e^{i\varphi}\lambda$ act on the frame bundle $\Frame^g$ by
\begin{align}\label{eq:rotation}
a_\varphi(\lambda)&=a(e^{i\varphi}\lambda)&
b_{1\varphi}(\lambda)&=b_1(e^{i\varphi}\lambda)&
b_{2\varphi}(\lambda)&=b_2(e^{i\varphi}\lambda).
\end{align}
This action preserves the corresponding Fermi curve. Conversely, let the 1-form $Q(\dot{a},\dot{b}_1,\dot{b}_2)$ vanish identically along a smooth section $t\mapsto(a_t,b_{1t},b_{2t})$ of the frame bundle $\Frame^g$. Then this section stays in one orbit of the action~\eqref{eq:rotation}.
\end{lemma}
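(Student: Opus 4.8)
The plan is to prove the two assertions separately, in both cases working through the B\'ezout pair $(c_1,c_2)$ attached to an infinitesimal Whitham deformation as in (\ref{equation:cdefinition}). First I would make the action precise. Taking (\ref{eq:rotation}) suitably normalised so that $a_\varphi\in\Hm^g$ and $b_{k\varphi}\in\B_{a_\varphi}$ --- for instance by the scalars $e^{-ig\varphi}$ and $e^{-i(g+1)\varphi/2}$, giving $a_\varphi=e^{-ig\varphi}a(e^{i\varphi}\,\cdot\,)$, $b_{k\varphi}=e^{-i(g+1)\varphi/2}b_k(e^{i\varphi}\,\cdot\,)$ --- the only things to check are that the reality conditions, the positivity of $\lambda^{-g}a$, the leading-coefficient normalisation and the distinctness of roots are preserved (the roots of $a$, and of $\greatest(\B_a)$, are merely rotated), and that the biholomorphism $R_\varphi\colon X_a\to X_{a_\varphi}$, $(\lambda,y)\mapsto(e^{-i\varphi}\lambda,\,e^{-i(g+1)\varphi/2}y)$, satisfies $R_\varphi^{\ast}\Theta_{b_{k\varphi}}=\Theta_{b_k}$, so that $\Theta_{b_{k\varphi}}$ again has purely imaginary periods. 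Since $R_\varphi$ carries the preimages of the roots of $\greatest(\B_a)$ to those of $\greatest(\B_{a_\varphi})$ and pulls the locally defined pair $(q_{1\varphi},q_{2\varphi})$ back to $(q_1,q_2)$ up to an additive constant, it induces an isomorphism of the associated Fermi curves; this is the first assertion.

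For the infinitesimal part, differentiate the action at $\varphi=0$ to obtain its generating vector field $V$ on $\Frame^g$. Substituting $V=(\dot a,\dot b_1,\dot b_2)$ into the Whitham equations (\ref{equation:derivative1})--(\ref{equation:derivative2}) one checks in one line that the B\'ezout pair of $V$ is $(c_1,c_2)=(b_1,b_2)$; hence by (\ref{equation:c}) (equivalently by (\ref{eq:Q})) one has $Q(V)=c_1b_2-c_2b_1=0$, so $Q$ vanishes identically along every orbit of the action.

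Conversely, assume $Q(\dot a_t,\dot b_{1t},\dot b_{2t})\equiv 0$ along a section $\gamma(t)=(a_t,b_{1t},b_{2t})$ of $\Frame^g$, and let $(c_1,c_2)$ be the B\'ezout pair of the tangent vector $\dot\gamma(t)$; it is uniquely determined, since $\dot q_k=ic_k/y$ is fixed by $d\dot q_k=\tfrac{d}{dt}\Theta_{b_k}$ only up to an additive constant, and such a constant must vanish because $ic_k/y$ is odd while a constant is even under the hyperelliptic involution. Now (\ref{equation:c}) with $Q=0$ reads $c_1 b_{2t}=c_2 b_{1t}$, so $(c_1,c_2)=\kappa(b_{1t},b_{2t})$ for a rational $\kappa$; comparing degrees and using $\greatest(b_{1t},b_{2t})=\greatest(\B_{a_t})$ gives $\kappa=p/\greatest(\B_{a_t})$ with $p$ a polynomial of degree at most $\deg\greatest(\B_{a_t})$ that is real with respect to $\rho$. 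When $a_t\notin\Rm^g$ this forces $p$ to be a real constant $r(t)$, so $(c_1,c_2)=r(t)(b_{1t},b_{2t})$ is precisely the B\'ezout pair of $r(t)V(\gamma(t))$, and by the uniqueness statement in Lemma~\ref{remark:greatest}(iii) we conclude $\dot\gamma(t)=r(t)V(\gamma(t))$. Thus the velocity of the section lies along the rotation orbit through it, and since $V$ generates the action, integrating the ordinary differential equation $\dot\gamma=r(t)V(\gamma)$ yields $\gamma(t)=R_{s(t)}(\gamma(0))$ with $s(t)=\int_0^t r(\tau)\,d\tau$; hence $\gamma$ stays in one orbit of (\ref{eq:rotation}).

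The main obstacle is the treatment of the part of the section lying in $\Rm^g$, where $\greatest(\B_{a_t})$ has positive degree and $p$ need not be constant a priori: there one has to show, combining Lemma~\ref{remark:greatest}(ii) (that $\greatest(a_t,b_{kt})$ must divide $c_k$) with the reality of the $c_k$, that the only B\'ezout pairs $\kappa(b_{1t},b_{2t})$ actually realised by a deformation keeping $Q\equiv 0$ along the whole section are the real constant multiples of $(b_{1t},b_{2t})$, and that the residual ambiguity permitted on $\Rm^g$ by Lemma~\ref{remark:greatest}(iii) points in the rotation direction; alternatively one may run the argument first on the open dense set $\Hm^g\setminus\Rm^g$ and extend it by continuity, using real-analyticity of $\gamma$ and of the action. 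One could also bypass the tangent-vector bookkeeping via (\ref{cancel}), which shows directly that $Q\equiv 0$ is equivalent to the local defining equations $R_\beta(q_1,q_2)$ of the Fermi curve being $t$-independent, i.e.\ to the Fermi curve being constant along the section. Checking the normalising constants in the action and the identity $R_\varphi^{\ast}\Theta_{b_{k\varphi}}=\Theta_{b_k}$ is the remaining routine verification.
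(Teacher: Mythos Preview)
Your forward direction is correct and essentially the paper's argument, with the normalising constants spelled out more carefully: you verify $R_\varphi^\ast\Theta_{b_{k\varphi}}=\Theta_{b_k}$, compute that the infinitesimal rotation has B\'ezout pair $(c_1,c_2)=(b_1,b_2)$, and conclude $Q(V)=0$.

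For the converse you take a genuinely different route from the paper, and there is a real gap. On $\Hm^g\setminus\Rm^g$ your B\'ezout argument is sound: $Q=0$ forces $(c_1,c_2)=r(b_1,b_2)$ with $r\in\R$, and Lemma~\ref{remark:greatest}(iii) (applicable since $\greatest(b_1,b_2)=1$ implies $\greatest(a,b_1,b_2)=1$) then gives $\dot\gamma=rV$. But your proposed extensions to $\Rm^g$ do not close. The lemma assumes only a \emph{smooth} section, so real-analyticity of $\gamma$ is unavailable; and a section can lie entirely inside $\Rm^g$ (every rotation orbit of a point of $\Rm^g$ does), so density of $\{t:a_t\notin\Rm^g\}$ cannot be assumed. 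Your appeal to Lemma~\ref{remark:greatest}(ii) does not obviously force $p$ to be a real multiple of $\greatest(\B_{a_t})$: when $\greatest(a_t,b_{kt})=1$ that condition is vacuous, yet $p/\greatest(\B_{a_t})$ may still be non-constant. And Lemma~\ref{remark:greatest}(iii) no longer guarantees uniqueness once $\greatest(a,b_1,b_2)\neq 1$.

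The paper bypasses this case analysis entirely. From \eqref{cancel} it reads off that $Q\equiv 0$ makes the local defining polynomials $R_\beta$ independent of $t$, so the Fermi curve is constant along the section---the step you correctly identify. What you are missing is the passage from ``constant Fermi curve'' to ``single rotation orbit'': the paper observes that since $X_{a_t}$ is smooth it is biholomorphic to the normalisation of the (fixed) Fermi curve; that the hyperelliptic involution $\sigma$ acts on the Fermi curve via $(q_1,q_2)\mapsto(-q_1+\text{const},-q_2+\text{const})$ and hence lifts to it; and that the quotient by this lifted involution recovers $\lambda:X_{a_t}\to\CP^1$ up to a M\"obius transformation, which the marked points $\lambda=0,\infty$ and the real structure $\rho$ reduce to a rotation. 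This argument is uniform in $a_t$ and needs no separate treatment of $\Rm^g$. Supplying this step would complete your Fermi-curve alternative; without it the converse remains open on $\Rm^g$.
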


\begin{proof} Away from the roots of $a$ the action~\eqref{eq:rotation} extends naturally to an action on the corresponding local maps $(q_1,q_2)$. The corresponding infinitesimal action is given by
\begin{align*}
\dot{q}_1&=\left.\frac{dq_{1\varphi}}{d\varphi}\right|_{\varphi=0}=\frac{i\lambda\Theta_{b_1}}{d\lambda}=\frac{ib_1}{y}&
\dot{q}_2&=\left.\frac{dq_{1\varphi}}{d\varphi}\right|_{\varphi=0}=\frac{i\lambda\Theta_{b_2}}{d\lambda}=\frac{ib_2}{y}.
\end{align*}
Due to~\eqref{equation:cdefinition} the corresponding $(c_1,c_2)$ are equal to $(b_1,b_2)$. Now \eqref{equation:c} implies that $Q$ vanishes. Comparing~\eqref{cancel} with \eqref{eq:Q} shows that this action preserves the corresponding Fermi curve together with all $R_\beta$.

Conversely, if $Q(\dot{a},\dot{b}_1,\dot{b}_2)$ vanishes identically along a smooth section of the frame bundle $\Frame^g$, then the Fermi curve is constant along this section. We have assumed here that $ X_a $ is smooth. Then, as in  \cite{CLP:09}, the lift of the map $ (q_1, q_2) $ to the normalisation provides a biholomorphism from the hyperelliptic curve $ X_a $ to the (analytic) normalisation of the Fermi curve. The Fermi curve does not possess an intrinsically defined degree 2 function $ \lambda $. However the hyperelliptic involution acts as $\sigma^\ast q_1=-q_1+\text{constant}$ and $\sigma^\ast q_2=-q_2+\text{constant}$. Hence $\sigma$ lifts to an involution of the Fermi curve. The normalisation of the quotient by this lift is biholomorphic to $\CP^1$. Therefore fixing the Fermi curve determines $ \lambda: X_a\rightarrow\CP^1 $ first clearly up to M\"obius transformations and then up to rotations since the points corresponding to $ \lambda = 0,\infty $ are marked. Then the presence of the anti-holomorphic involution $ \rho $ means that only rotational symmetry in $ \lambda $ remains.\end{proof}


\begin{theorem}\label{theorem:closure} 
Suppose $ \Rm^g $ has codimension one in $ \mathcal{H}^g $ at $ a\in\Rm^g $. Then $ a $ belongs to the closure of 
 $ \{\tilde a\in\mathcal{H}^g \mid\deg (\greatest (\B_{\tilde a})) = 1\} $. 
\end{theorem}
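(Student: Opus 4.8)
The plan is to reduce to a statement about the locus where the common divisor has degree at least two, and then bound the dimension of that locus using the Whitham machinery of Section~\ref{sec:whitham}.

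\textbf{Reduction.} Write $\Rm^g_{\ge2}=\{\tilde a\in\Hm^g\mid\deg(\greatest(\B_{\tilde a}))\ge2\}$ and $\Rm^g_{=1}=\Rm^g\setminus\Rm^g_{\ge2}$, so the assertion is that $a\in\overline{\Rm^g_{=1}}$. If $\deg(\greatest(\B_a))=1$ there is nothing to prove, so assume $\deg(\greatest(\B_a))\ge2$. Since $b_1,b_2\in\B_a$ are not proportional over $\C$ (real independence together with the reality condition $b\in P^{g+1}_\R$ rules out $\C$-proportionality), $f=b_1/b_2$ is non-constant, and \eqref{eq:degreef} with Theorem~\ref{lemma:equivalent} gives $\deg(\greatest(\B_a))\le g-1$ for every $a\in\Hm^g$; in particular $\Rm^g_{\ge2}=\varnothing$ for $g\le2$, so the theorem is immediate there and we may assume $g\ge3$. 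Because a limit of polynomials sharing a common factor of degree $\ge2$ again shares one, the function $\tilde a\mapsto\deg(\greatest(\B_{\tilde a}))$ is upper semicontinuous; hence $\Rm^g_{\ge2}$ is closed and $\Rm^g_{=1}$ is open in $\Rm^g$. Consequently, if $a\notin\overline{\Rm^g_{=1}}$ then a neighbourhood of $a$ in $\Rm^g$ is contained in $\Rm^g_{\ge2}$, whence $\dim_a\Rm^g_{\ge2}=\dim_a\Rm^g=2g-1$. Thus it suffices to prove the \emph{Main Claim}: $\Rm^g_{\ge2}$ has codimension at least two in $\Hm^g$.

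\textbf{Proof of the Main Claim via Whitham deformations.} Fix $\tilde a\in\Rm^g_{\ge2}$, set $g_0=\greatest(\B_{\tilde a})$ (degree $\ge2$, real with respect to $\rho$ up to a unit), choose a frame $(b_1,b_2)$ over $\tilde a$ and write $b_k=g_0\tilde b_k$ with $\greatest(\tilde b_1,\tilde b_2)=1$. For any infinitesimal Whitham deformation $(\dot a,\dot b_1,\dot b_2)\in T_{(\tilde a,b_1,b_2)}\Frame^g$, Lemma~\ref{remark:greatest}(i) forces $g_0\mid Q(\dot a,\dot b_1,\dot b_2)$; since $\deg Q\le2$ this means $Q(\dot a,\dot b_1,\dot b_2)=0$ unless $\deg g_0=2$, in which case $Q(\dot a,\dot b_1,\dot b_2)\in\R\,g_0$. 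So along $\Frame^g|_{\Rm^g_{\ge2}}$ the $P^2_\R$-valued $1$-form $Q$ takes values in a single real line. By Lemma~\ref{lemma:rotations}, a smooth section of $\Frame^g$ along which $Q$ vanishes identically lies in one rotation orbit; applying this along analytic arcs in $\Frame^g|_{\Rm^g_{\ge2}}$ shows that, modulo the four reframing directions (which project to $0$ in $T\Hm^g$) and one rotation direction, variation of $\tilde a$ inside $\Rm^g_{\ge2}$ is controlled by the single real parameter $Q/g_0$. Moreover, by \eqref{equation:c} and the non-uniqueness description following Lemma~\ref{remark:greatest}, the accompanying B\'ezout data has the shape $(c_1,c_2)=(c_1^0+r\tilde b_1,\,c_2^0+r\tilde b_2)$ with $r\in P^2_\R$, and then \eqref{equation:derivative1}--\eqref{equation:derivative2}, solvable by Lemma~\ref{remark:greatest}(ii)--(iii), pin down $(\dot a,\dot b_1,\dot b_2)$ essentially uniquely. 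Assembling these bounds yields $\dim_{\tilde a}\Rm^g_{\ge2}\le2g-2$.

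\textbf{Main obstacle.} The delicate point is precisely the last step, turning ``$Q$ drops rank along $\Rm^g_{\ge2}$'' into the dimension estimate: vanishing of $Q$ on a single tangent vector does not by itself give rigidity, since Lemma~\ref{lemma:rotations} only applies to a family along which $Q\equiv0$, so the count must be organised around analytic arcs and branches of $\Rm^g_{\ge2}$ rather than a naive tangent-space computation, and one must track how the B\'ezout ambiguity interacts with the divisibility constraints of Lemma~\ref{remark:greatest}(ii)--(iii). It is natural to split according to the roots of $g_0$. If $g_0$ has a root on $S^1$, then $\tilde a\in\Sm^g_{\lambda_0}$ for that $\lambda_0$, and the realisation of $\Sm^g_{\lambda_0}$ as the image of the zero locus of $b_1(\lambda_0),b_2(\lambda_0)$ on $\Frame^g$ gives direct access to $\dim_{\tilde a}\Sm^g_{\lambda_0}\supseteq\Rm^g_{\ge2}$ near $\tilde a$. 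If instead all roots of $g_0$ lie off $S^1$ they occur in $\rho$-conjugate pairs, so reality forces any deformation of $\tilde a$ staying in $\Rm^g$ to preserve all of them unless it meets the closed set $\Sm^g$; since $\tilde a\notin\Sm^g$, near $\tilde a$ the set $\Rm^g$ then agrees with $\Rm^g_{\ge2}$, and the rigidity from the rank-one behaviour of $Q$ again bounds its dimension below $2g-1$. Combining the Main Claim with the reduction completes the proof.
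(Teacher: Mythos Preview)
Your reduction is fine and your instinct to use the divisibility constraint $g_0\mid Q$ together with Lemma~\ref{lemma:rotations} is exactly right; these are the two ingredients the paper uses. The gap is in how you deploy them.

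The $1$-form $Q$ is \emph{not} defined on all of $T\Frame^g$. By construction (the derivation leading to \eqref{equation:cdefinition}--\eqref{equation:c}), $Q(\dot a,\dot b_1,\dot b_2)$ exists only when $\left.\tfrac{d}{dt}\right|_{t=0}\Theta_{b_k}$ is exact, i.e.\ only for infinitesimal \emph{Whitham} deformations, those preserving the periods of $\Theta_{b_1},\Theta_{b_2}$. So the statement ``along $\Frame^g|_{\Rm^g_{\ge2}}$ the $P^2_\R$-valued $1$-form $Q$ takes values in a single real line'' is true only on the Whitham sub-distribution, and your subsequent count --- four reframing directions, one rotation direction, one parameter $Q/g_0$ --- silently discards all the period-changing directions in $T_{\tilde a}\Rm^g_{\ge2}$. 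Nothing you have written bounds those, so the inequality $\dim_{\tilde a}\Rm^g_{\ge2}\le 2g-2$ does not follow. Your ``Main obstacle'' paragraph worries about a different (and less serious) point.

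The paper closes this gap by introducing the period map
\[
\phi:\Frame^g|_{\Rm^g}\longrightarrow\Frame(\Grass(2,\R^g)),\qquad (a,b_1,b_2)\longmapsto\Bigl(\textstyle\int_{B_j}\Theta_{b_1},\int_{B_j}\Theta_{b_2}\Bigr)_{j=1}^g,
\]
and first proving (for $g\ge2$, via a short argument about degree-two functions on $X_a$) that \eqref{equation:injective} is injective, so $\phi$ really lands in the frame bundle. Assuming for contradiction that a neighbourhood of $a$ in $\Rm^g$ lies in $\Rm^g_{\ge2}$, one works at a nearby smooth point of $\Rm^g$ of dimension $2g-1$ where $d\phi$ has maximal rank; its level set $\mathcal N$ through $(a,b_1,b_2)$ is then a smooth $3$-manifold on which, by definition, every tangent vector is a Whitham deformation and hence carries a value of $Q$. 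Now your divisibility argument applies: $Q$ maps $T\mathcal N$ into the real line $\R\,g_0$, so $\ker Q$ is at least $2$-dimensional and, by injectivity of \eqref{equation:injective}, projects to a $2$-dimensional sub-bundle of $TN$ with $N=\pi(\mathcal N)$. That yields two independent curves through $a$ along which $Q\equiv0$, whereas Lemma~\ref{lemma:rotations} says there is only one such curve up to reparametrisation. This contradiction is exactly the one you want; what was missing was the period map $\phi$ and its rank, which is what legitimises treating $Q$ as a $1$-form on a manifold of the right dimension.
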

\begin{proof} 
The degree of $ b\in\B_a $ is $ g +1 $, so for $ g = 0 $ there is nothing to prove whilst for $ g = 1 $, if $ \greatest (\B_a) >1 $ then $ \deg f = 1 $, in contradiction to Lemma~\ref{lemma:equivalent}. 
Suppose then that $ g\geq 2 $. Writing $\Grass(2,\R^g) $ for the tautological bundle over the Grassmannian $ \Gr(2,\R^g) $ and $ \Frame(\Grass(2,\R^g)) $ for its frame bundle,
 consider the map
\begin{align*}
\phi:\left.\Frame^g\right|_{\Rm^g}&\rightarrow\Frame(\Grass(2,\R^g))\\
(a, b_1, b_2)  &\mapsto\Bigl(\int_{B_1}\Theta_{b_1},\ldots,\int_{B_g}\Theta_{b_1}\Bigr), \Bigl(\int_{B_1}\Theta_{b_2}, \ldots ,\int_{B_g}\Theta_{b_2}\Bigr).
\end{align*}
To see that $ \phi $ does indeed have image in the frame bundle, suppose that  for some $ a\in\Rm^g $, we have
 $ b $ in the kernel of 
\begin{align}\label{equation:injective}
\B_a &\rightarrow\R^g&
b &\mapsto \Bigl(\int_{B_1}\Theta_{b},\ldots,\int_{B_g}\Theta_{b}\Bigr).
\end{align}  
Then $ \Theta_b = d q $ for some meromorphic function $ q $ of degree 2, and we may choose $ q $ so that it is anti-symmetric with respect to the hyperelliptic involution $ \sigma $. But then if $ b \not\equiv 0 $ then $ \sigma $ has just four fixed points, contradicting the assumption $ g\geq 2 $.

Since smooth points of $ \Rm^g $ are open and dense and the dimension of a real subvariety is upper semi-continuous \cite[p41, p93]{Narasimhan:66}, a point $ a\in\Rm^g $ such that $ \Rm^g $ has dimension $ 2 g -1 $ at $ a $ is in the closure of the set of smooth points with the same property. Hence it suffices to prove the lemma under the additional assumption that $ a $ is a smooth point. The domain of $ \phi $ has dimension $ 2g +3 $ at $ a $ whilst its codomain has dimension $ 2 g $.   
We know that the rank of the differential of the smooth map $ \phi $ is upper semi-continuous, so takes its maximal value on an open and dense subset of $ \left.\Frame^g\right|_{\R_\text {smooth}} $. 
 Furthermore whenever $ d\phi_{(a , b_1, b_2)} $ has maximal rank $ 2 g $ then by the Implicit Function Theorem there is a smooth three-dimensional submanifold $ \mathcal{N} $ of $ \Frame^g $ passing through $ ( a , b_1, b_2) $ along which the periods of $ \Theta_{b _ 1},\Theta_{b_2} $ are fixed. Take a path $ (a_t, b_{1 t}, b_{2 t}) $ in $ \mathcal{N} $, passing through $ (a, b_1, b_2) $ when $ t = 0 $. Since \eqref{equation:injective} is injective, the frame bundle projection $ \pi $ of $ {\Frame^g} $ restricts to an immersion of $ \mathcal{N} $ ; let $ N =\pi (\mathcal{N}) $. 

Suppose that there is a neighbourhood $ U $ of $ a $ in $ N $ such that for all $ \tilde a\in U $, $ \deg (\greatest (\B_{\tilde a}))\geq 2 $. 
Restricting the path $ (a_t, b_{1 t}, b_{2 t}) $ to $ \mathcal{U}:=\pi^{-1} (U) $, the polynomials $ b_{1 t}, b_{2 t} $ must then have exactly  two common roots, since the degree of $ Q $ is two and by Remark~\ref{remark:greatest}, we know that $ \greatest (\B_{a_t}) $ divides $ Q $.
Hence the quadratic polynomial $ Q $ is determined up to scaling by $ a $ and so we may interpret it as a regular 1-form on $ \mathcal{U} $. Since $ \mathcal{U} $ has dimension 3, there is a 2-dimensional sub-bundle of $ T\mathcal{U} $ along which $ Q $ vanishes, and moreover by the injectivity of  \eqref{equation:injective}, this pushes forward to a 2-dimensional sub-bundle of $ T U $. We may choose two curves passing through $ a $ which have linearly independent tangent vectors at $ a $ and such that $ Q $ always vanishes along the velocity vectors to these curves.

However we know from Lemma~\ref{lemma:rotations} that there is through any point in $ a\in\mathcal{H}^g $ up to reparametrization only one curve such that $ Q $ vanishes along the velocity vector of this curve, namely the rotations~\eqref{eq:rotation}. This contradiction then forces the conclusion that $ U $ must contain points for which $ \deg (\greatest (\B_{\tilde a}))) <2 $, completing the proof of the lemma.
\end{proof}

\begin{center}
{\bf(C)$ \Rightarrow $(A), (B), (D)}
\end{center}

\begin{theorem}
\label{theorem:degreeassumption}
Suppose $ a\in\Rm^g $ satisfies $ \deg (\greatest (\B_a)) = 1 $. Then
\begin{enumerate}
\item[(i)] $ a $ is a smooth point of some $ \Sm^g_{\lambda_0} $, of codimension 2 in $ \mathcal{H}^g $ 
\item[(ii)] $ a $ is smooth in $ \Rm^g $, of codimension 1 
\item[(iii)] $ a $ belongs to the closure of two different $ V_j $ 
\end{enumerate}
\end{theorem}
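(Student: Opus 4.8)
The plan is to reduce all three parts to a single statement --- that a pair of root-tracking functions defines a submersion at $a$ --- and then to establish that statement with the Whitham machinery of Section~\ref{sec:whitham}.

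\textbf{Reduction.} Since $\deg(\greatest(\B_a))=1$ and the zero set of every $b\in\B_a$ is stable under $\lambda\mapsto\bar\lambda^{-1}$, the unique common zero $\lambda_0$ of $\B_a$ is a fixed point of this involution, so $\lambda_0\in S^1$ and $a\in\Sm^g_{\lambda_0}$. The subspace of $\B_a$ of polynomials vanishing at $\lambda_0$ to order $\ge2$ is proper (else $\greatest(\B_a)$ would vanish there to order $\ge2$), so I may choose a real-analytic frame $\tilde a\mapsto(b_1(\tilde a),b_2(\tilde a))$ of $\B^g$ near $a$ for which $\lambda_0$ is a simple zero of each $b_i(a)$. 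Then $b_i(\tilde a)$ has a single zero $\mu_i(\tilde a)$ near $\lambda_0$, depending real-analytically on $\tilde a$, and since $b_i(\tilde a)$ is real with respect to $\rho$ a simple zero near the $\rho$-fixed point $\lambda_0$ is itself $\rho$-fixed, so $\mu_i(\tilde a)\in S^1$. As $\lambda_0$ is the only common zero of $b_1(a),b_2(a)$, and is simple for each, for $\tilde a$ near $a$ the polynomials $b_1(\tilde a),b_2(\tilde a)$ have a common zero exactly when $\mu_1(\tilde a)=\mu_2(\tilde a)$, in which case $\deg\greatest(\B_{\tilde a})=1$; moreover $\deg\greatest(\B_{\tilde a})\le1$ throughout, and $\deg\greatest(\B_{\tilde a})=1$ forces a zero on $S^1$, so near $a$ one has $\Sm^g=\Rm^g=\{\tilde a:\deg\greatest(\B_{\tilde a})=1\}$. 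Setting $M=(\mu_1,\mu_2)\colon\Hm^g\to S^1\times S^1$ and writing $\Delta$ for the diagonal, near $a$
\[
\Rm^g=M^{-1}(\Delta),\qquad\Sm^g_{\lambda_0}=M^{-1}\bigl((\lambda_0,\lambda_0)\bigr).
\]
Hence (i) and (ii) both follow once $M$ is shown to be a submersion at $a$.

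\textbf{Submersivity of $M$ (the main obstacle).} I would prove $dM_a$ has rank two using the Whitham equations. For an infinitesimal Whitham deformation $(\dot a,\dot b_1,\dot b_2)$ with quadratic $Q\in P^2_\R$, differentiating $b_i(\tilde a_t)(\mu_i(\tilde a_t))\equiv0$ gives $\dot\mu_i=-\dot b_i(\lambda_0)/b_i'(\lambda_0)$ with $b_i'(\lambda_0)\neq0$; evaluating \eqref{equation:derivative1}--\eqref{equation:derivative2} at $\lambda_0$ (where $b_i(\lambda_0)=0$) expresses $\dot b_i(\lambda_0)$ through $c_i(\lambda_0),c_i'(\lambda_0)$, and differentiating \eqref{equation:c} at $\lambda_0$, using $b_i(\lambda_0)=0$ and $Q(\lambda_0)=0$ (forced by $\greatest(\B_a)\mid Q$, Lemma~\ref{remark:greatest}), gives $c_1(\lambda_0)b_2'(\lambda_0)-c_2(\lambda_0)b_1'(\lambda_0)=Q'(\lambda_0)\,a(\lambda_0)$. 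The space of $Q\in P^2_\R$ divisible by $(\lambda-\lambda_0)$ is two-dimensional, $Q'(\lambda_0)$ is a nonzero functional on it, and by Lemma~\ref{remark:greatest} one can solve \eqref{equation:c} and then \eqref{equation:derivative1}--\eqref{equation:derivative2} --- uniquely when $a(\lambda_0)\neq0$, since then $\greatest(a,b_1,b_2)=1$ --- so that, together with the residual freedom in $(c_1,c_2)$ and in the lift to $(\dot a,\dot b_1,\dot b_2)$, the resulting pairs $(\dot\mu_1,\dot\mu_2)$ span $T_{(\lambda_0,\lambda_0)}(S^1\times S^1)$. Carrying out this computation rigorously --- in particular controlling those ambiguities, and handling the sub-case $a(\lambda_0)=0$, where smoothness of $X_a$ makes $\lambda_0$ a simple zero of $a$ with $\lambda_0\neq0$ and one expands the same equations one order further --- is the step I expect to be hardest.

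\textbf{Two winding numbers (iii).} Take the normalisation $b_1(0)=1,\ b_2(0)=i$, and let $b_0,b_\infty\in\B_a\otimes\C$ be as in Section~\ref{section:decomposition}, so $b_1+ib_2=ib_0$ and $\tilde f=b_0/b_\infty$. Since $\tilde f\colon S^1\to S^1$ has neither a zero nor a pole at $\lambda_0$, the orders of $b_0$ and $b_\infty$ at $\lambda_0$ agree; were they $\ge2$, then $b_1$ and $b_2$, being complex-linear combinations of $b_0$ and $b_\infty$, would both vanish at $\lambda_0$ to order $\ge2$, contradicting $\deg\greatest(\B_a)=1$. So $\lambda_0$ is a simple zero of $b_0$, and for $\tilde a$ near $a$, $b_0(\tilde a)$ has a unique zero $\nu(\tilde a)$ near $\lambda_0$, with $\nu(\tilde a)\in S^1$ precisely for $\tilde a\in\Rm^g$ (then $\nu$ is the common zero). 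On $\Hm^g\setminus\Sm^g$ near $a$ one has $\greatest(\B_{\tilde a})=1$, hence $\deg f=g+1$, so \eqref{eq:negative} gives $\wn(\tilde f)=2\,m_{B(0,1)}(b_0(\tilde a))-(g+1)$, and $m_{B(0,1)}(b_0(\tilde a))$ increases by one as $\nu(\tilde a)$ crosses $S^1$. Thus it suffices that $\log|\nu|$ change sign across the codimension-one hypersurface $\Rm^g$ at $a$, i.e.\ that some deformation move $|\nu|$ to first order. Using the submersivity of $M$, the first-order variation of $\nu$ is a nontrivial complex-affine combination of $\dot\mu_1$ and $\dot\mu_2$ with weights proportional to $ib_1'(\lambda_0)$ and $b_2'(\lambda_0)$, whose real span is all of $\C$ --- so that $\log|\nu|$ takes both signs --- unless $b_2'(\lambda_0)/b_1'(\lambda_0)\in i\R$, which happens only when $\lambda_0$ is a multiple zero of one of the normalised $b_1,b_2$; that degenerate configuration must be settled separately by expanding $b_0$ past its leading term near $\lambda_0$. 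In all cases there are then points $\tilde a\notin\Rm^g\supseteq\Sm^g$ arbitrarily close to $a$ with $\nu(\tilde a)$ strictly inside, respectively strictly outside, the unit disc, so $a$ lies in the closure of the two distinct sets $V_j$ realised on the two sides of $\Rm^g$ (the parity and range constraints of Theorem~\ref{theorem:decomposition} being automatic from \eqref{eq:negative}). This gives (iii).
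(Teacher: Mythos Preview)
Your overall strategy is sound and close in spirit to the paper's, but the execution diverges at the frame choice, and this is where your outline leaves real work undone.

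\textbf{Frame choice and the Whitham step.} You take a frame $b_1,b_2$ each with a \emph{simple} zero at $\lambda_0$ and track the pair $(\mu_1,\mu_2)\in S^1\times S^1$. The paper instead chooses $b_1$ with a simple zero and $b_2$ with a zero of order $\ge 2$ at $\lambda_0$, and works with the evaluation map $\eta\colon\tilde a\mapsto(b_1(\lambda_0),b_2(\lambda_0))$. The two submersion claims are equivalent (a frame change multiplies $(\dot b_1(\lambda_0),\dot b_2(\lambda_0))$ by an invertible real matrix, since $b_j(\lambda_0)=0$), so your reduction is correct. But the paper's asymmetric frame is what makes the Whitham computation go through cleanly: with $b_2'(\lambda_0)=0$ the derivative of \eqref{equation:c} at $\lambda_0$ decouples, and one can explicitly exhibit two deformations with $(\dot b_1(\lambda_0),\dot b_2(\lambda_0))$ equal to $(0,\ne0)$ and $(\ne0,0)$ by judicious choice of $\hat Q$ and of the B\'ezout ambiguity \eqref{equation:choices}. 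Your symmetric frame gives a coupled linear relation between $c_1(\lambda_0)$ and $c_2(\lambda_0)$, and you would have to disentangle it; this is the ``hardest step'' you flag, and it is genuinely harder with your frame than with the paper's.

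\textbf{Two non-issues.} First, the sub-case $a(\lambda_0)=0$ you worry about cannot occur: by definition of $\Hm^g$ one has $\lambda^{-g}a(\lambda)>0$ on $S^1$, so $a(\lambda_0)\ne0$ always, and $\greatest(a,b_1,b_2)=1$ follows immediately. Second, your ``degenerate configuration'' in part (iii) is not a real obstruction. Once submersivity is established, $(\dot b_1(\lambda_0),\dot b_2(\lambda_0))$ ranges over the full two-dimensional real line $L\times L$ in $\C^2$ (where $L=\lambda_0^{(g+1)/2}\R$); hence $\dot b_0(\lambda_0)=\dot b_2(\lambda_0)-i\dot b_1(\lambda_0)$ ranges over $L+iL=\C$, and $\dot\nu=-\dot b_0(\lambda_0)/b_0'(\lambda_0)$ automatically attains radial directions. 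The paper does exactly this: it expresses $\dot\lambda_0$ directly via $\dot b_1(\lambda_0),\dot b_2(\lambda_0)$ and concludes (ii) and (iii) in one stroke, without any case analysis on $b_2'(\lambda_0)/b_1'(\lambda_0)$.

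In summary: your reduction to a rank-two statement is correct and elegant (the map to $S^1\times S^1$ with $\Rm^g$ as the diagonal preimage is a nice packaging), but the actual rank computation is where the content lies, and the paper's asymmetric frame choice is what makes that computation tractable rather than merely plausible.
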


\begin{proof}{\bf(i):} Given $ a $ such that $ \deg (\greatest (\B_a)) = 1 $ then by the reality condition $b\in P^{g+1}_\R$, in fact $ a\in\Sm^g_{\lambda_0} $ for some $ \lambda_0\in S^1 $. Choose $ b_1\in\B_a $ with a simple root at $ \lambda_0 $. Given any choice of $ b_2\in\B_a $ linearly independent from $ b_1 $, by replacing $ b_2 $ with an appropriate linear combination of $ b_1, b_2 $ if necessary, we may assume that $ b_2 $ has a root of order at least two at $ \lambda_0 $. In a neighbourhood $ U\in\mathcal{H}^g $ of the given $ a $ we may take a smooth choice of $ b_1, b_2 $ and hence consider the map
\begin{align*}
\eta: \;\; U &\rightarrow\R^2&
 a&\mapsto (b_1 (\lambda_0), b_2 (\lambda_0)).
\end{align*}
 We show that there exist a pair of (Whitham) deformations $ \dot{a}\in T_a\mathcal{H}^g $ such that the corresponding $ (\dot{b}_1 (\lambda_0),\dot{b}_2 (\lambda_0)) $ are linearly independent, and then (1) 
follows from the Implicit Function Theorem.


Suppose we are given a quadratic polynomial $ Q (\lambda) =\hat Q (\lambda) (\lambda -\lambda_0) $ together with $ (a, b_1, b_2) \in\Frame^g $ satisfying $ a\in\Sm_{\lambda_0} $ and $ \deg (\greatest (\B_a ) = 1 $. 
We rewrite  \eqref{equation:c} as
\begin{equation}\label{equation:chat}
c_1\hat b_2 - c_2\hat b_1 =\hat Qa,
\end{equation}
where $ b_k (\lambda) = (\lambda -\lambda_0 )\hat b_k (\lambda) $.




The condition $ \dot{b}_k(\lambda_0) = 0 $ 
is by  \eqref{equation:derivative1}, \eqref{equation:derivative2} equivalent  to the equality
\begin{equation}\label{equation:equality}
2 \lambda_0 a (\lambda_0) {c_k' (\lambda_0)}  = c_k (\lambda_0) ({\lambda_0 a' (\lambda_0) + a (\lambda_0)} ).
\end{equation}
We show first the existence of an infinitesimal deformation $ (\dot{a},\dot{b}_1,\dot{b}_2)\in T_{(a, b_1, b_2)} \Frame^g $ such that $ \dot{b}_1 (\lambda_0) $ vanishes whilst $ \dot{b}_2 (\lambda_0) $ does not.   Given $ \hat Q $, since we have assumed $ \deg (\greatest (\B_a)) = 1 $, if $ ( c_1,c_2) $ is a solution of \eqref{equation:chat} and is real with respect to $ \rho $,
then so is
\begin{equation}\label{equation:choices}
(c_1, c_2) + A (b_1, b_2) + i B\left (\frac{\lambda +\lambda_0}{\lambda -\lambda_0}\right) (b_1,{b_2} )
\end{equation}
for any $ A, B \in\R $. Note that  as $ a $ has no roots on the unit circle, $ a (\lambda_0)\neq 0 $ and so \eqref{equation:equality}  does not hold whenever 
 $ c_k (\lambda_0) = 0 $ whilst $ c_k' (\lambda_0 )\neq 0 $. Choose $ \hat Q $ with $ \hat Q (\lambda_0) = 0 $ and $ \hat Q' (\lambda_0)\neq0 $. Then  for all possible solutions $ (c_1, c_2) $ of \eqref{equation:chat}we have that $ c_2 (\lambda_0) = 0 $. Since $ a (\lambda_0)\neq 0 $, the right-hand side of  \eqref{equation:chat} vanishes to precisely first order. 

Consider first the case when $ b_2 $ vanishes to exactly order two at $ \lambda_0 $. Then by \eqref{equation:chat}, to see that there is a solution with $ c_2' (\lambda_0)\neq 0 $ it suffices to show that the freedom given by  \eqref{equation:choices} enables us to arrange that $ c_1 (\lambda_0)\neq 0 $. But clearly we may use the parameter $ B $ to arrange this, since $ \hat b_1 (\lambda)\neq 0 $. 

Now suppose that $ b_2 $ has a root of $ \lambda_0 $ of order greater than two. Then if $ c_2' (\lambda_0) = 0 $, the left-hand side of \eqref{equation:chat} vanishes at least to order two, in contradiction to the fact that its right-hand side vanishes to exactly order one. Hence in either case we see that the parameter $ B $ allows us to find a solution such that \eqref{equation:equality} does not hold for $ k = 2 $. Since $ b_2 $ vanishes to at least third-order at $ \lambda_0 $ whilst $ b_1 $ vanishes to exactly first order there, the parameter $ A $ enables us to make arbitrary changes to $ c_1' (\lambda_0) $ without changing $ c_1 (\lambda_0) $, $ c_2 (\lambda_0) $ or $ c_2' (\lambda_0) $. Hence we may choose $ A $ and $ B $ so that \eqref{equation:equality} holds for $ k = 1 $ and not for $ k = 2 $.

Since the only common root of $ b_1, b_2 $ lies on the unit circle and $ a $ can have no roots there, $ \greatest (a, b_1, b_2) = 1 $ and so by Remark~\ref{remark:greatest} for our chosen $ (\hat Q, c_1, c_2) $ there exists a (unique) infinitesimal Whitham deformation $ (\dot{a},\dot{b}_1,\dot{b}_2) $.

We now proceed to show the existence of an infinitesimal deformation such that $ \dot{b}_1 (\lambda_0) $ is non-vanishing whilst $ \dot{b}_2 (\lambda_0) = 0 $. Again it suffices to find appropriate $ (\hat Q, c_1, c_ 2) $. For any $ \hat Q $ which does not vanish at $ \lambda_0 $, the right-hand side of \eqref{equation:chat} is non-vanishing at $ \lambda_0 $ and hence for any solution $ (c_1, c_2) $ of \eqref{equation:chat} resulting from such $ \hat Q $ we have $ c_2 (\lambda_0)\neq 0 $. 

Take a preliminary choice $ \hat Q^\flat $ with $ \hat Q^\flat (\lambda_0)\neq 0 $ and corresponding $ c_k^\flat $. Then take $ \hat Q^\sharp $ with $ \hat Q^\sharp(\lambda_0) = 0 $ and consider a corresponding solution $ (c_1^\sharp, c_2^\sharp) $. Arguing as above, using the freedom given by the parameter $ B $ there is a solution so that $ c_2^\sharp (\lambda_0) = 0 $ whilst $ (c_2^\sharp)' (\lambda_0 )\neq 0 $ and the parameter $ A $ allows one to make arbitrary changes to $ (c_1^\sharp) '(\lambda_0) $ without changing $ c_1^\sharp (\lambda_0) $, $ c_2^\sharp (\lambda_0) $ or $ (c_2^\sharp)' (\lambda_0) $. The equation  \eqref{equation:chat} is linear in $ \hat Q, c_1, c_2 $. Thus with an appropriate choice of $ r, A, B \in\R $ we may take $ \hat Q =\hat Q^\flat + rQ^\sharp $ so that the resulting $ c_k = c_k^\flat + r c_k^\sharp $ fulfils \eqref{equation:equality} for $ k = 2 $ and not for $ k = 1 $.
%
%

{\bf(ii):} As $ \deg (\greatest (\B_a)) = 1 $, we may choose a neighbourhood $ U\subseteq\mathcal{H}^g $ of $ a $ such that the degree of $ \greatest (\B_{\tilde a}) $ is at most one throughout $ U $. Hence $ U \cap \Rm^g = U\cap\Sm^g $. To demonstrate that $ \Rm^g $ is smooth at $ a $ of co-dimension one, recall the polynomial $ b_0 $, defined at the beginning of section~\ref{section:decomposition}.

%
%

Using $b\in P^{g+1}_\R$, if $ \lambda_0\in S^1 $ is a root of $ b_0 $, then both $ \alpha b_1 (\lambda_0) +\beta b_2 (\lambda_0) $ and $ \overline\alpha b_1 (\lambda_0) +\overline\beta b_2 (\lambda_0) $ vanish and hence $ \lambda_0 $ is a common root of $ b_1 $ and $ b_2 $. Given a deformation $ (a (t), b_1 (t), b_2 (t)) $, define $ \lambda_0 (t) $ to be the root of $ b_0 (t) $ which when $ t = 0 $ is $ \lambda_0 $. Then applying the Implicit Function Theorem to  the map $ U\rightarrow\R $ given by $ \left|\lambda_0 \right| $, to demonstrate the smoothness of $ \Rm^g $ at $ a $ it suffices to show the existence of an infinitesimal deformation such that $ \lambda_0 $ is deformed away from $ S^1 $.

We have
\[
0 =\left.\frac{d}{d t}\right |_{t = 0}\left (b_0 (t) (\lambda_0 (t))\right) =\dot{b}_0 (\lambda_0) + b_0' (\lambda_0) \dot\lambda_0.
\]
As in the first part of the proof we may assume that $ b_1 $ has a simple root at $ \lambda_0 $ whilst $ b_2 $ has at least a double root there, so
\[
\dot\lambda_0 = -\frac{\dot{b}_0 (\lambda_0)}{b_0' (\lambda_0)} = \frac{ \alpha \dot{b}_1 (\lambda_0) - \beta\dot{b}_2 (\lambda_0)}{\alpha  b_1' (\lambda_0)}\text {, since $ b_2 ' (\lambda_0) = 0 $ }.
\]
We showed above that there exist a pair of  deformations so that the corresponding $ (\dot{b}_1 (\lambda_0),\dot{b}_2 (\lambda_0) $ are linearly independent. Using this, the above equation shows that $ \dot\lambda_0 $ can be chosen arbitrarily and so $ \Rm^g $ is smooth at $ a $.

{\bf(iii):} 
The winding number $ \wn (\tilde f) $ of $ \tilde f = -\frac{b_0}{b_\infty } $ satisfies 
\[
\wn (\tilde f) = m_{B (0, 1)} (b_\infty ) - m_{B (0, 1)} (b_0 ) = m_{B (0, 1)} (b_0) - m_{\P^1\setminus\overline {B (0, 1)}} (b_0).
\]
Hence the fact that the root $ \lambda_0 $ of $ b_0 $ can be deformed to either the inside or the outside of the unit circle shows that the point $ a $ lies in the boundary of two distinct $ V_j $.
\end{proof}

If $ a $ lies in the closure of points with $ \deg (\greatest (\B_a)) = 1 $, then by Theorem~\ref{theorem:degreeassumption} it is in the closure of smooth points of $ \Rm^g $ of codimension 1. Since $ \Rm^g $ is a proper subvariety, and dimension is upper semicontinuous, it must likewise have codimension 1 at $ a $. Similarly it belongs to some $ \Sm^g_{\lambda_0} $ and $ \Sm^g_{\lambda_0} $ has co-dimension at most two at $ a $. If $ \Sm^g_{\lambda_0} $ has codimension 1 at $ a $, then every neighbourhood of $ a $ contains a smooth point of $ \Sm^g_{\lambda_0} $ of co-dimension 1. Rotating the smooth points with respect to $ \lambda $ then shows that at them the variety $ \Rm^g $ has full dimension, a contradiction since it is proper. Thus we have shown that the condition {\bf(C)} implies the other three conditions.

\begin{center}
{\bf(D)$ \mathbf \Rightarrow $(A)}
\end{center}

\begin{lemma}
Suppose that for some $ \lambda_0\in S^1 $, the point $ a\in \Sm^g_{\lambda_0} $ satisfies $ \dim_a \Sm^g_{\lambda_0} = 2 g -2 $. Then at $ a $, the variety $ \Rm^g $ has dimension $ 2 g -1 $.
\end{lemma}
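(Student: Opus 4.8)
The plan is to produce a $(2g-1)$-dimensional subset of $\Rm^g$ through $a$ (or through points approaching $a$) by flowing $\Sm^g_{\lambda_0}$ along the rotation action, and then to get the reverse inequality from properness of $\Rm^g$. Fix a neighbourhood $U$ of $a$ in $\Hm^g$ carrying a smooth frame $b_1,b_2$ of $\B^g$; for $\lambda_0\in S^1$ and $b\in P^{g+1}_\R$ the reality condition forces $b(\lambda_0)$ to lie on the real line $\lambda_0^{(g+1)/2}\R\subset\C$, so $\eta:=\bigl(b_1(\,\cdot\,)(\lambda_0),b_2(\,\cdot\,)(\lambda_0)\bigr)$ is a map $U\to\R^2$ with $\Sm^g_{\lambda_0}\cap U=\eta^{-1}(0)$. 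Since $\Sm^g_{\lambda_0}$ is thus locally cut out by two real equations, $\dim_p\Sm^g_{\lambda_0}\ge 2g-2$ for all $p$, so the locus of smooth points at which it has dimension exactly $2g-2$ is open and has $a$ in its closure; as $\dim\Rm^g$ is upper semicontinuous it suffices to prove $\dim_a\Rm^g=2g-1$ under the extra assumption that $a$ is such a smooth point, in which case $d\eta_a$ is surjective and $T_a\Sm^g_{\lambda_0}=\ker d\eta_a$.

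By Lemma~\ref{lemma:rotations} the circle acts on $\Frame^g$ by $\lambda\mapsto e^{i\varphi}\lambda$, and since $\deg(\greatest(\B_{\,\cdot\,}))$ is invariant under this action, $\Rm^g$ is rotation-invariant. Hence the real-analytic map
\[
\Phi:(\Sm^g_{\lambda_0}\cap U)\times(-\epsilon,\epsilon)\longrightarrow\Hm^g,\qquad (p,\varphi)\mapsto p(e^{i\varphi}\,\cdot\,),
\]
has image contained in $\Rm^g$. Its domain has dimension $2g-1$, and $d\Phi_{(a,0)}$ is the inclusion $T_a\Sm^g_{\lambda_0}\hookrightarrow T_a\Hm^g$ on the first factor and the rotation vector field $\xi_a$ on the second. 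Thus $\Phi$ is an immersion near $(a,0)$ — so that $\Rm^g$ contains a $(2g-1)$-dimensional submanifold through $a$ — precisely when $\xi_a\notin T_a\Sm^g_{\lambda_0}$, i.e.\ when $d\eta_a(\xi_a)\ne0$. Differentiating $\eta$ along $a_\varphi(\lambda)=a(e^{i\varphi}\lambda)$ and using that any $\B_a$-valued variation vanishes identically at $\lambda_0$ (because $a\in\Sm^g_{\lambda_0}$), the frame-dependent term drops out and one obtains $d\eta_a(\xi_a)=\bigl(i\lambda_0\,b_1'(\lambda_0),\,i\lambda_0\,b_2'(\lambda_0)\bigr)$. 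Therefore $\xi_a\in T_a\Sm^g_{\lambda_0}$ if and only if $b_1'(\lambda_0)=b_2'(\lambda_0)=0$, i.e.\ $(\lambda-\lambda_0)^2$ divides $\greatest(\B_a)$.

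If $\lambda_0$ is a simple root of $\greatest(\B_a)$ this already gives $\dim_a\Rm^g\ge 2g-1$. In general the set $\{p\in\Sm^g_{\lambda_0}\cap U:(\lambda-\lambda_0)^2\mid\greatest(\B_p)\}$ is closed in $\Sm^g_{\lambda_0}$, and I would show it is a proper subset near $a$ by exhibiting a Whitham deformation tangent to $\Sm^g_{\lambda_0}$ (so $\dot b_1(\lambda_0)=\dot b_2(\lambda_0)=0$) along which $\dot b_k'(\lambda_0)\ne0$ for some $k$, using the freedom in the B\'ezout coefficients $c_1,c_2$ of \eqref{equation:c} in the same spirit as in the proof of Theorem~\ref{theorem:degreeassumption}(i). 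Granting this, there are smooth points $p$ of $\Sm^g_{\lambda_0}$ of dimension $2g-2$, arbitrarily close to $a$, at which $\Phi$ is an immersion; hence $\dim_p\Rm^g\ge 2g-1$ and, by semicontinuity, $\dim_a\Rm^g\ge 2g-1$. For the opposite inequality, $\Rm^g$ is a proper subvariety of $\Hm^g$ (its lift to $\Frame^g$ is the vanishing locus of the resultant of $b_1,b_2$, which is not identically zero since a generic $a$ has $\greatest(\B_a)$ of degree $0$), so $\dim_a\Rm^g\le 2g-1$; combining, $\dim_a\Rm^g=2g-1$.

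The main obstacle is precisely the Whitham step of the last paragraph: ruling out that $\greatest(\B_p)$ has a multiple root at $\lambda_0$ for every $p$ near $a$ in $\Sm^g_{\lambda_0}$. This is delicate because $\deg(\greatest(\B_a))$ may exceed one, so the analysis of \eqref{equation:c}--\eqref{equation:derivative2} does not reduce verbatim to the argument of Theorem~\ref{theorem:degreeassumption}(i); everything else is the transversality computation above together with routine semicontinuity and properness bookkeeping.
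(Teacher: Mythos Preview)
Your overall strategy---rotate $\Sm^g_{\lambda_0}$ inside the rotation-invariant $\Rm^g$, combine with semicontinuity of dimension, and bound above by properness---is exactly the paper's. The difference is that you insist on $\Phi$ being an immersion at $(a,0)$, which forces you into the transversality computation and then the unfinished Whitham argument. That last step is a genuine gap as you yourself note, but it is also entirely avoidable.

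The point you are missing is that $\Phi$ has \emph{finite fibres}. Indeed $\Phi(p,\varphi)=\Phi(p',\varphi')$ forces $p'=p_{\varphi-\varphi'}$, and $p_{\varphi-\varphi'}\in\Sm^g_{\lambda_0}$ means $e^{i(\varphi-\varphi')}\lambda_0$ is a root of the polynomial $\greatest(\B_p)$; there are only finitely many such $\varphi-\varphi'$. Hence on the $(2g-1)$-manifold $(\Sm^g_{\lambda_0})_{\mathrm{sm}}\times(-\epsilon,\epsilon)$ the real-analytic map $\Phi$ has generic rank $2g-1$ (otherwise the constant-rank theorem would give positive-dimensional fibres). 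So there are points $q\in\Rm^g$ arbitrarily close to $a$ with $\dim_q\Rm^g\ge 2g-1$, and upper semicontinuity of dimension finishes the lower bound---no Whitham deformation and no simple-root hypothesis needed.

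The paper phrases this slightly differently: it first assumes $a$ smooth in $\Rm^g$ and argues that if $\dim_a\Rm^g=2g-2$ then $\Sm^g_{\lambda_0}$ would be open in $\Rm^g$ near $a$, forcing the rotation orbit to stay inside $\Sm^g_{\lambda_0}$---impossible since $\greatest(\B_a)$ cannot vanish on an arc of $S^1$. It then passes to singular $a$ by the same semicontinuity. Either route works; yours becomes complete once you replace ``immersion at $(a,0)$'' by ``finite fibres, hence generically an immersion''.
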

\begin{proof}
If $ a $ is a smooth point of $ \Rm^g $, then by rotating $ a $ we see that the dimension of $ \Rm^g $ at $ a $ is at least one more than that of $ \Sm_{\lambda_0} $. The smooth points of $ \Rm^g $ are dense and dimension at the singular points is at least that of nearby smooth points \cite[p 41, p 93]{Narasimhan:66} so for any $ a $ with $ \dim_a \Sm^g_{\lambda_0} = 2 g -2 $, we have that $ \dim_a\Rm^g\geq 2 g -1 $. But $ \Rm^g $ is a proper subset of $ \mathcal{H}^g $ \cite[Lemma~8 and paragraph after the proof of this Lemma]{CS:12} and so we have equality.\end{proof}
We now turn our attention to the consequences of our four equivalent conditions. On a neighbourhood of $ a\in \Sm^g $ we can choose representatives of a canonical basis $ A_1,\ldots, A_g, B_1,\ldots, B_g $ for the homology of $ X_a $ such that $ \rho_*A_i = - A_i $, $ \rho_*(B_i) = B\;\mathrm{mod}\;\langle A_1,\ldots, A_g\rangle $ and the images of the representatives under the projection $ \lambda: X_a\rightarrow\C P^1 $ depend smoothly on $ a $. Note that these reality conditions together with that of $ b\in\B _a $ forces the $ A $ -periods of the resulting differential $ \Theta_b $ to be real and hence the assumption that all periods of $ \Theta_b $ are purely imaginary means that 
\[
\int_{A_i}\Theta_b = 0\text { for } i = 1,  \ldots , g. 
\]
Furthermore we make a smooth choice of path $ \gamma $ connecting the two points of $ X_a $ whose $ \lambda $ -coordinate is $ \lambda_0 $ and such that $ \rho_*(\gamma) =\gamma\;\mathrm{mod}\; \langle A_1,\ldots , A_g \rangle $. Recall that for each $ b\in\B_a $ we write $ \Theta_b =\frac{b (\lambda)}{y} \frac{d\lambda}{\lambda} $. The map
\begin{align*}
\varphi _a: \B_a &\rightarrow\R^{g +1}&
b &\mapsto\frac{1}{2\pi i}\left (\int_{B_1}\Theta_b,\ldots,\int_{B_g}\Theta_b,\int_{\gamma}\Theta_b\right)
\end{align*}
allows to characterise $ \mathcal{H}^g (\lambda_0)\subseteq\Sm^g_{\lambda_0} $ as those $ a $ on which
\begin{align*}
\Phi : \Sm^g_{\lambda_0} &\to\Gr(2,\R^{g+1})&
a &\mapsto\varphi_a  (\B_a)
\end{align*}
has image a rational plane in $ \R^{g +1} $. 

\begin{theorem}\label{theorem:Euclideanisomorphism}
Take a smooth point $ a\in\Sm^g_{\lambda_0} $ such that $ \deg (\greatest (\B_a)) = 1 $. Then the differential $ d\Phi_a $ is injective.
\end{theorem}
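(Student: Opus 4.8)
The plan is to prove directly that $\ker d\Phi_a=0$. First note that the hypothesis $\deg(\greatest(\B_a))=1$ forces $g\ge 2$: if $g\le 1$, two linearly independent elements of $\B_a$, each of degree $g+1$, sharing a root would give $\deg f\le 1$, contradicting Theorem~\ref{lemma:equivalent}. So assume $g\ge 2$ and let $\dot a\in T_a\Sm^g_{\lambda_0}$ with $d\Phi_a(\dot a)=0$. Since $a$ is a smooth point of $\Sm^g_{\lambda_0}$, I would realise $\dot a$ by a curve $t\mapsto a_t$ in $\Sm^g_{\lambda_0}$ and lift it to a smooth path $(a_t,b_{1t},b_{2t})$ in $\Frame^g$ in which $b_1:=b_{10}$ has a simple zero at $\lambda_0$; this is possible since $\greatest(\B_a)=(\lambda-\lambda_0)$, so not every element of $\B_a$ vanishes to second order at $\lambda_0$. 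As $a_t\in\Sm^g_{\lambda_0}$, all of $\B_{a_t}$ vanishes at $\lambda_0$, hence $b_{kt}(\lambda_0)=0$ and so $\dot b_k(\lambda_0)=0$.

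The next step is to turn this deformation into an infinitesimal Whitham deformation. Because $d\Phi_a(\dot a)=0$, the two-plane $\varphi_{a_t}(\B_{a_t})\subseteq\R^{g+1}$ is stationary at $t=0$, so $\left.\tfrac{d}{dt}\right|_0\varphi_{a_t}(b_{kt})$ lies in $\varphi_a(\B_a)$ for $k=1,2$. Since the map \eqref{equation:injective}, and hence $\varphi_a$, is injective for $g\ge 2$, I may modify $b_{1t},b_{2t}$ by a $t$-dependent element of $\Gl(2,\R)$ equal to the identity at $t=0$ so as to cancel these derivatives; concretely, the derivative of a moving frame spanning a stationary plane lies in that plane, so it is removed by a first-order change of frame. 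After this modification the $t$-derivatives at $0$ of all the $B_i$-periods and of the $\gamma$-period of $\Theta_{b_{kt}}$ vanish, while the $A_i$-periods vanish identically. Hence the machinery of Section~\ref{sec:whitham} applies and produces $c_1,c_2\in P^{g+1}_\R$ with $\dot q_k=ic_k(\lambda)/y$, $d\dot q_k=\left.\tfrac{d}{dt}\right|_0\Theta_{b_{kt}}$, satisfying \eqref{equation:derivative1}, \eqref{equation:derivative2}, and $c_1b_2-c_2b_1=Qa$ (that is, \eqref{equation:c}) with $Q\in P^2_\R$.

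The heart of the proof is to exploit the extra vanishing of the $\gamma$-period derivative. Integrating $d\dot q_k$ along $\gamma$, the endpoint contributions from the moving path cancel because $\Theta_{b_k}$ itself vanishes at the two points $x_1,x_2$ over $\lambda_0$ (there $b_k(\lambda_0)=0$), so $0=\dot q_k(x_2)-\dot q_k(x_1)$, which is a nonzero multiple of $c_k(\lambda_0)$; hence $c_k(\lambda_0)=0$. Evaluating \eqref{equation:derivative1} and \eqref{equation:derivative2} at $\lambda=\lambda_0$, where $b_k(\lambda_0)=\dot b_k(\lambda_0)=c_k(\lambda_0)=0$, collapses them to $2i\lambda_0 a(\lambda_0)c_k'(\lambda_0)=0$, so $c_k'(\lambda_0)=0$ since $\lambda_0 a(\lambda_0)\ne 0$; thus $(\lambda-\lambda_0)^2\mid c_k$. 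As $(\lambda-\lambda_0)\mid b_k$, the left-hand side of $c_1b_2-c_2b_1=Qa$ is divisible by $(\lambda-\lambda_0)^3$, and since $a(\lambda_0)\ne 0$ while $\deg Q\le 2$ this forces $Q\equiv 0$, i.e.\ $c_1b_2=c_2b_1$. Writing $b_1=(\lambda-\lambda_0)\hat b_1$ with $\hat b_1(\lambda_0)\ne 0$ and $c_1=(\lambda-\lambda_0)^2\tilde c_1$, and using $\greatest(b_1,b_2)=(\lambda-\lambda_0)$, the identity $c_1b_2=c_2b_1$ forces $\hat b_1\mid\tilde c_1$; but $\deg\tilde c_1\le g-1<g=\deg\hat b_1$, so $c_1\equiv 0$ and then $c_2\equiv 0$. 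Now \eqref{equation:derivative1} and \eqref{equation:derivative2} read $2a\dot b_k=\dot a\,b_k$, and since $\greatest(a,b_1,b_2)=1$ (the common root $\lambda_0$ of $b_1,b_2$ lies on $S^1$, where $a$ has no roots), $\dot a$ vanishes at every zero of the squarefree polynomial $a$; as $\deg\dot a\le 2g=\deg a$ this gives $\dot a=c\,a$ for a constant $c\in\C$. Finally the reality condition $\rho^*\dot a=\overline{\dot a}$ forces $c\in\R$, and differentiating the normalisation that the leading coefficient of $a$ has modulus one forces $c=0$; hence $\dot a=0$ and $d\Phi_a$ is injective.

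I expect the main obstacle to be the middle stage: organising the $\Gl(2,\R)$-modification so that all the relevant period derivatives vanish at once, and carefully accounting for the endpoint terms when differentiating $\int_\gamma\Theta_{b_{kt}}$ along a path whose endpoints themselves move with the deformation. Once each $c_k$ is shown to vanish to second order at $\lambda_0$, the remainder is forced by elementary divisibility and degree counts.
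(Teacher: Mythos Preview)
Your proof is correct and follows essentially the same route as the paper's: reduce to a Whitham deformation with vanishing period derivatives, use the $\gamma$-period condition to get $c_k(\lambda_0)=0$, feed this and $\dot b_k(\lambda_0)=0$ into \eqref{equation:derivative1}--\eqref{equation:derivative2} to obtain $c_k'(\lambda_0)=0$, conclude $Q\equiv0$ by degree, then $c_k\equiv0$, and finally $\dot a=0$. The only cosmetic differences are that you make the $\Gl(2,\R)$ frame adjustment and the final step $\dot a=ca\Rightarrow c=0$ explicit, and you deduce $c_k\equiv0$ by a direct degree count rather than via uniqueness of B\'ezout coefficients; your worry about moving endpoints is unnecessary since $\lambda_0$ is fixed, but your observation that $\Theta_{b_k}$ vanishes there handles it anyway.
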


\begin{proof}
Suppose $ \dot{a} =\left.\dfrac {d}{dt} a (t)\right|_{t = 0}\in\ker (d\Phi_a) $. Take infinitesimal deformations $ \dot{b}_1,\dot{b}_2 $ represented by $ b_1 (t), b_2 (t)\in\B_{a (t)} $ with 
\[
\left.\dfrac {d}{dt}\right|_{t = 0} (\varphi_{a (t)} (b_1 (t))) =\left.\dfrac {d}{dt}\right|_{t = 0} (\varphi_{a (t)} (b_2(t))) = 0.
\]

 The assumption that
\[
\left.\dfrac {d}{dt}\right|_{t=0} \int_{\gamma}\Theta_{b_1 (t)} = \left.\dfrac {d}{dt}\right|_{t=0} \int_{\gamma}\Theta_{b_2 (t)} = 0
\]
is equivalent to $ c_1 (\lambda_0) = c_2 (\lambda_0) = 0 $. Furthermore the condition $ \dot{a}\in T_a\Sm^g_{\lambda_0} $ implies that $ \dot{b}_1 (\lambda_0) = \dot{b}_2 (\lambda_0) = 0 $ and hence from  \eqref{equation:derivative1} and  \eqref{equation:derivative2} that also $ c_1' (\lambda_0) = c_2' (\lambda_0) = 0 $. From $ b_1 (\lambda_0) = b_2 (\lambda_0) = 0 $, we know that the polynomial $ Q $ has a root of at least third order at $ \lambda =\lambda_0 $, and as $ Q $ has degree two we conclude that $ Q\equiv 0 $. 

We have argued that a deformation $ (\dot{a},\dot{b}_1,\dot{b}_2) $ with $ d\Phi_a (\dot{a}) = 0 $ must have $ Q \equiv 0 $. Conversely, we now investigate what the fact that $ Q \equiv 0 $ tells us about the deformation, specifically about $ c_1 $ and $ c_2 $. We have assumed that $ \deg (\greatest (\B_a)) = 1 $, and hence  \eqref{equation:chat} has solutions $ c_1, c_2 $. The ambiguity in its solutions is precisely captured by transformations
\[
(c_1, c_2)\mapsto (c_1, c_2) + p (\lambda) (\hat b_1,\hat b_2),
\]
where in order to preserve the degree of $ c_k $, the polynomial $ p $ can have degree at most one and due to the reality conditions obeyed by $ c_k, b_k $ we have $ p (\lambda) =\alpha \lambda +\bar\alpha $. We are interested only in solutions satisfying $ c_1 (\lambda_0) = c_2 (\lambda_0) = 0 $, and since we have assumed that $ \hat b_1 = \frac{b_1}{\lambda -\lambda_0} $ and $ \hat b_2 =\frac{b_2}{\lambda -\lambda_0} $ do not have a common root this is enough to guarantee the uniqueness of $ c_k $. Then since $ b_1 $ and $ b_2 $ have only one common root counting multiplicity we must have $ c_1 \equiv c_2\equiv 0 $. Given $ (a, b_1, b_2)\in\Frame^g$ and $ (Q, c_1, c_2) $, the equations we must solve to find the corresponding deformation $ (\dot{a},\dot{b}_1,\dot{b}_2) $ are \eqref{equation:derivative1}, \eqref{equation:derivative2} and \eqref{equation:derivative3}. Here \eqref{equation:derivative3} is trivial and the remaining two equations are reduced to
\[
2 a\dot{b}_1 =\dot{a} b_1,\quad 2 a\dot{b}_2 =\dot{a} b_2.
\]
 Since we have assumed that $ b_1, b_2 $ do not have common roots other than $ \lambda_0 $, which is not a root of $ a $ we may conclude  that $ \dot{a} $ vanishes at the roots of $ a $, which forces $ \dot{a} \equiv 0 $.
\end{proof}

We come now to our main theorem.
\begin{theorem}\label{theorem:main}
For $ a\in\Rm^g $ the statements {\bf(A), (B), (C)} and {\bf(D)} are equivalent.
Moreover, if one of these equivalent conditions is satisfied than $ a $ belongs to the closure of $ \Pm^g $. This closure is contained in $ \Sm^g $.
\end{theorem}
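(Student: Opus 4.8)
The plan is to first settle the equivalence of (A)--(D) by chaining together implications already established, and then to use condition (C) to place $ a $ in $ \overline{\Pm^g} $. For the equivalence, all the implications needed are in hand: (B)$\Rightarrow$(A) is the Corollary to Lemma~\ref{lemma:codimension}; (A)$\Rightarrow$(C) is Theorem~\ref{theorem:closure}, since (A) says precisely that $ \Rm^g $ has codimension one at $ a $; (C)$\Rightarrow$(A),(B),(D) follows from Theorem~\ref{theorem:degreeassumption} together with the paragraph after it; and (D)$\Rightarrow$(A) is the Lemma preceding Theorem~\ref{theorem:Euclideanisomorphism}. I would simply observe that these yield (A)$\Rightarrow$(C)$\Rightarrow$(B)$\Rightarrow$(A) and (C)$\Rightarrow$(D)$\Rightarrow$(A), so the four conditions are equivalent.

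For the membership claim, assume (C), so that $ a $ lies in the closure of $ T:=\{\tilde a\in\Hm^g\mid\deg(\greatest(\B_{\tilde a}))=1\} $; it then suffices to prove $ T\subseteq\overline{\Pm^g} $. I would fix $ \tilde a\in T $. By Theorem~\ref{theorem:degreeassumption}(i), $ \tilde a $ is a smooth point of $ \Sm^g_{\lambda_0} $ for a suitable $ \lambda_0\in S^1 $, with $ \dim_{\tilde a}\Sm^g_{\lambda_0}=2g-2 $, and by Theorem~\ref{theorem:Euclideanisomorphism} the differential $ d\Phi_{\tilde a} $ of $ \Phi:\Sm^g_{\lambda_0}\to\Gr(2,\R^{g+1}) $ is injective. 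The observation that makes everything work is that $ \dim\Gr(2,\R^{g+1})=2(g-1)=2g-2 $ as well, so $ d\Phi_{\tilde a} $ is in fact an isomorphism, and by the Inverse Function Theorem $ \Phi $ restricts to a diffeomorphism from a neighbourhood of $ \tilde a $ in $ \Sm^g_{\lambda_0} $ onto an open subset of $ \Gr(2,\R^{g+1}) $ containing $ \Phi(\tilde a) $. Since the rational $ 2 $-planes --- those admitting a basis in $ \Q^{g+1} $ --- are dense in $ \Gr(2,\R^{g+1}) $, this open set contains rational planes arbitrarily close to $ \Phi(\tilde a) $; pulling them back along $ \Phi $ yields points $ \hat a\in\Sm^g_{\lambda_0} $, arbitrarily close to $ \tilde a $, on which $ \Phi $ has rational image. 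By the characterisation of the CMC-torus locus $ \Pm^g(\lambda_0) $ as exactly those $ a\in\Sm^g_{\lambda_0} $ with rational $ \Phi $-image, these $ \hat a $ lie in $ \Pm^g(\lambda_0)\subseteq\Pm^g $; hence $ \tilde a\in\overline{\Pm^g} $, and therefore $ a\in\overline T\subseteq\overline{\Pm^g} $.

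Finally, $ \Pm^g(\lambda_0)\subseteq\Sm^g_{\lambda_0} $ for every Sym point gives $ \Pm^g\subseteq\Sm^g $, and I would complete the proof by noting that $ \Sm^g $ is closed: if $ a_n\to a $ in $ \Hm^g $ with each $ \greatest(\B_{a_n}) $ having a root $ z_n\in S^1 $, then, after passing to a subsequence with $ z_n\to z\in S^1 $ and using the continuous local frame $ b_1,b_2 $ of $ \B^g $ normalised by $ b_1(0)=1 $, $ b_2(0)=i $, joint continuity of $ (a,\lambda)\mapsto b_k(a,\lambda) $ forces $ b_k(a,z)=\lim_n b_k(a_n,z_n)=0 $, so $ z $ is a common root of $ \B_a $ lying on $ S^1 $ and $ a\in\Sm^g $; hence $ \overline{\Pm^g}\subseteq\overline{\Sm^g}=\Sm^g $. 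I do not anticipate a genuine obstacle: the substantive work is carried out in Theorems~\ref{theorem:closure}, \ref{theorem:degreeassumption} and \ref{theorem:Euclideanisomorphism}, and what is left is assembly. The only points asking for a little care are the dimension count that upgrades injectivity of $ d\Phi_{\tilde a} $ to an isomorphism (without which one could not conclude that $ \Phi $ is locally onto), and the observation that $ \Sm^g $, although it need not be a subvariety when $ g>2 $, is nonetheless closed.
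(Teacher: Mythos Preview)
Your proposal is correct and follows essentially the same approach as the paper's proof: assemble the already-proven implications for the equivalence, then use condition (C) together with Theorems~\ref{theorem:degreeassumption} and~\ref{theorem:Euclideanisomorphism} and the Inverse Function Theorem to conclude that $\Phi$ is a local diffeomorphism, so rational planes pull back to points of $\Pm^g_{\lambda_0}$. You supply details the paper leaves implicit --- the dimension count $\dim\Gr(2,\R^{g+1})=2g-2$, the density of rational planes, and a direct argument that $\Sm^g$ is closed --- but the architecture is the same.
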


\begin{proof} The equivalence of the four statements has already been established by the arguments of this section. To prove that such $ a $ lies in the closure of $ \Pm^g $, it suffices to consider $ a\in\Rm^g $ satisfying $ \deg (\greatest  (\B_{ a})) = 1 $. We showed in Theorem ~\ref{theorem:degreeassumption} that such an $ a $ is a smooth point of dimension $ 2g -2 $ in some $ \Sm^g_{\lambda_0} $. Then by Theorem ~\ref{theorem:Euclideanisomorphism} and the Inverse Function Theorem, on a neighbourhood of $ a $, the map $ \Phi :\Sm^g_{\lambda_0}\rightarrow\Gr(2,\R^{g+1}) $ is a local diffeomorphism onto its image, and hence $ a $ belongs to the closure of $ \mathcal{H}^g (\lambda_0) $.
\end{proof}

\section{Subvarieties of the open Grassmannian}\label{sec:grassmannian}
A subtlety which has been crucial in the above analysis is that the variety $ \Rm^g $ may contain connected components with smooth points of different dimensions. We have shown that the points of $ \Rm^g $ of highest dimension lie in the closure of $ \Pm^g $ and that these points are contained in $ \Sm^g $. It is natural to conjecture that they are all of $ \Sm^g $, but a more delicate deformation theory appears needed to prove this which we hope to undertake in future work. To better elucidate the geometric picture, in this section we analyse the corresponding subvarieties of the subset $\Gr(2,P^{g +1}_\R)^\circ $ of the Grassmannian $\Gr(2,P^{g +1}_\R)$ consisting of the 2-planes $ B $ such that the only $ b\in B $ with $ b (0) = 0 $ is the zero polynomial. If we decompose $ R^{g+1}_\R $ into the space of highest and lowest coefficients $ \simeq\R^2 $ and the remaining coefficients $ \simeq\R^g $, then the elements of $\Gr(2,P^{g+1}_\R)^\circ$ are graphs of linear maps from the former subspace $ \simeq\R^2 $ into the latter subspace $\simeq\R^g$. Furthermore these linear maps parameterize this open subset $\Gr(2,P^{g+1}_\R)^\circ $.

 Namely, in $ \Gr(2,P^{g +1}_\R)^\circ$ consider the subvarieties
\begin{align*}
R^g & : =\{B\in\Gr(2,P^{g +1}_\R)^\circ \mid \text { all } b\in B\text { have a common root.}\} \\
S^g_{\lambda_0} &: =\{B\in\Gr(2,P^{g +1}_\R)^\circ\mid\text { all } b\in B\text { satisfy } b (\lambda_0 ) = 0\}\text { for }\lambda_0\in S^1
\end{align*}
and the set 
\[
S^g: =\bigcup_{\lambda_0\in S^1} S^g_{\lambda_0}.
\]
These are easier to work with the than the spaces $ \Rm^g,\Sm^g_{\lambda_0} $ and $ \Sm^g $ and at points where the map 
\begin{align*}
\mathcal{H}^g &\rightarrow \Gr(2,P^{g +1}_\R)^\circ&
 a &\mapsto\B_a 
\end{align*}
is an immersion their properties determine those of their geometric counterparts. We see that we obtain a picture similar that of Cartan's Umbrella, with $ S^g $ providing the ``cloth'' of $ R^g $ and the remainder of $ R^g $ the ``stick''. For the analogous subsets $ \Rm^g $ we know that the ``cloth'', i.e.\ the points of highest dimension, is contained in $ \Sm^g $. 
\begin{proposition}
\begin{enumerate}
\item[(i)] If $ B\in S^g $ then $ \dim_B R^g = 2 g -1 $ and $ B $ is a smooth point of $ R^g $ if and only if $ \deg (\greatest(B)) = 1 $.
\item[(ii)] If $ B\in R^g\setminus S^g $ then $ \dim_B R^g = 2 g -2 $ and $ B $ is a smooth point of $ R^g $ if and only if $ \deg (\greatest(B)) = 2 $ (the minimum possible value for points not in $ S^g $ ).
\end{enumerate}
\end{proposition}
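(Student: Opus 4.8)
The plan is to stratify $R^g$ by the degree $k=\deg(\greatest(B))$ of the common factor, writing $R^g_k=\{B\in\Gr(2,P^{g+1}_\R)^\circ\mid\deg(\greatest(B))=k\}$, so that $R^g=\bigsqcup_{k\ge 1}R^g_k$. First I would record that each $R^g_k$ is the image, under the homeomorphism $(h,B')\mapsto hB'$, of the product of the space of $\rho$-real polynomials of degree $k$ with $h(0)\neq 0$ (modulo real scaling, of dimension $k$) with the open subset of $\Gr(2,P^{g+1-k}_\R)^\circ$ on which $\greatest(B')=1$ (of dimension $2(g-k)$); hence $R^g_k$ is a smooth manifold of dimension $2g-k$. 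Two elementary consequences of $\rho$-reality are used throughout: a common root of multiplicity one must lie on $S^1$, so $R^g_1\subseteq S^g$; and common roots off $S^1$ occur in pairs $\{\zeta,\bar\zeta^{-1}\}$, so $B\in R^g\setminus S^g$ forces $\deg(\greatest(B))$ to be even and at least $2$. I would also note that $S^g$ is closed (being the projection, along the compact factor $S^1$, of a closed subset of $S^1\times\Gr(2,P^{g+1}_\R)^\circ$), that $S^g_{\lambda_0}=\Gr(2,W_{\lambda_0})^\circ$ with $W_{\lambda_0}=\{b\mid b(\lambda_0)=0\}$ of real codimension one, so $S^g_{\lambda_0}$ is a smooth submanifold of codimension two and $S^g_{\lambda_0},S^g_{\lambda_1}$ meet transversely for $\lambda_0\neq\lambda_1$, and finally that $S^g=\overline{R^g_1}$: writing $B=(\lambda-\lambda_0)\widetilde B$ and perturbing $\widetilde B$ off the proper subvariety $\{\greatest\neq 1\}$ of $\Gr(2,P^g_\R)^\circ$ exhibits every point of $S^g$ as a limit of points of $R^g_1$. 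Thus $S^g$ is precisely the union of the $(2g-1)$-dimensional components of $R^g$.

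The dimension assertions then follow at once. Since $R^g$ has empty interior in the $2g$-dimensional $\Gr(2,P^{g+1}_\R)^\circ$, $\dim R^g\le 2g-1$; for $B\in S^g$ we have $B\in\overline{R^g_1}$ while every other stratum has smaller dimension, so $\dim_B R^g=2g-1$; for $B\in R^g\setminus S^g$ a neighbourhood of $B$ is disjoint from the closed set $S^g$ and hence, since $R^g_1\subseteq S^g$, meets $R^g$ only in the strata $R^g_k$ with $k\ge2$, each of dimension $\le 2g-2$, while keeping one conjugate-inverse pair and perturbing away the rest of the common factor shows $B\in\overline{R^g_2}$; so $\dim_B R^g=2g-2$.

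For the easy half of the smoothness statement, suppose $B$ lies in the lowest relevant stratum, i.e.\ $\deg(\greatest(B))=1$ when $B\in S^g$ and $\deg(\greatest(B))=2$ when $B\in R^g\setminus S^g$. A Rouch\'e/semicontinuity argument shows that a $2$-plane near $B$ with a common root has its common roots clustered near the simple common roots of $B$, hence is again of common-factor degree one (on $S^1$) or is a single conjugate-inverse pair off $S^1$; so $R^g$ coincides near $B$ with $R^g_1$, respectively $R^g_2$. One then checks that the parametrising map of that stratum is an immersion at the preimage of $B$ — the variation of the root of $h$ is transverse to the variations of $B'$ precisely because $\greatest(B')=1$, so $B'$ does not vanish at that root — and is injective there, so $B$ is a smooth point of $R^g$ of the stated dimension.

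The hard direction, which I expect to be the bulk of the work, is that $B$ is a \emph{singular} point of $R^g$ as soon as $\deg(\greatest(B))$ exceeds this minimal value; I would argue by cases on the root structure of $\greatest(B)$. If $\greatest(B)$ has two distinct roots $\lambda_0\neq\lambda_1$ on $S^1$ then $T_BS^g_{\lambda_0}+T_BS^g_{\lambda_1}$ is, by transversality, all of $T_B\Gr(2,P^{g+1}_\R)^\circ$; both summands lie in the Zariski tangent space of $R^g$ at $B$, so that space has dimension $2g>2g-1=\dim_B R^g$ and $B$ is singular. If instead $\greatest(B)$ has a root off $S^1$ or an $S^1$-root of multiplicity $\ge 2$ — equivalently, if a neighbourhood of $B$ in $R^g$ meets the complement of $S^g$ (obtained by perturbing all $S^1$-common-roots away, splitting a multiple one into a conjugate-inverse pair if necessary) — then were $R^g$ a manifold $N$ at $B$, the closed full-dimensional set $S^g\cap N$ would be open in $N$ (its complement in $N$ is open in $N$ and contained in the lower-dimensional set of points of $R^g\setminus S^g$), hence all of $N$, contradicting the presence of those points; so $B$ is singular. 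For $B\in R^g\setminus S^g$ with two distinct conjugate-inverse pairs in $\greatest(B)$ the same clopen argument applies, $R^g$ being near $B$ the union of two distinct smooth $(2g-2)$-dimensional copies of $\overline{R^g_2}$; and for a repeated conjugate-inverse pair, $B$ is the image of a fixed point of the involution interchanging the two coalescing pairs, so that $R^g$ near $B$ is, up to a smooth factor, the real quotient $\R^2/\{\pm 1\}$ — the cone $uw=v^2$ — which is singular there. These configurations exhaust the $B$ of non-minimal common-factor degree; the delicate point throughout is recognising, in each configuration, the correct mechanism forcing non-smoothness.
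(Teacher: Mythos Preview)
Your proof is correct in outline and takes a somewhat different, more structural route than the paper. You stratify $R^g$ by $k=\deg(\greatest(B))$, parametrise each stratum $R^g_k$ as a product via $(h,B')\mapsto hB'$ to get $\dim R^g_k=2g-k$, and then do a case analysis for singularity. The paper does not stratify explicitly: for smoothness at minimal degree it applies the Implicit Function Theorem to a concrete root-location map, and for singularity it argues only in the generic case where the roots of $\greatest(B)$ are pairwise distinct (there one sees several branches of $R^g$ through $B$ with distinct tangent planes), and then finishes in one line by observing that the singular locus of a real-analytic set is closed and that every $B$ with $\deg(\greatest(B))$ above the minimum is a limit of such distinct-root configurations. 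Your transversality computation for two distinct $S^1$-roots makes the paper's ``distinct tangent planes'' precise, and your clopen/dimension argument for the mixed case (an $S^1$ common root together with an off-$S^1$ pair, or a multiple $S^1$ root) is a pleasant alternative that the paper does not spell out.

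The one place where your argument is genuinely thinner than the paper's is the repeated conjugate-inverse pair in part~(ii). Your proposed local model $\R^2/\{\pm1\}\cong\{uw=v^2\}$ is asserted but not established: the natural parametrisation $(\zeta_1,B')\mapsto h_{\zeta_1}B'$, with $h_{\zeta_1}=(\lambda-\zeta_1)(\lambda-\bar\zeta_1^{-1})$, fails to be an immersion at the relevant point precisely because $\greatest(B')\neq 1$ there (the kernel of its differential contains the whole $\dot\zeta$-direction, since one can compensate $\dot h\,b''_i$ by $\dot b'_i$ when $b'_i=h_\zeta b''_i$), so the branched-cover picture needs a different justification. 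The paper avoids this entirely: perturb the double pair to two nearby distinct pairs, observe the resulting point is singular by the multiple-tangent-plane argument, and conclude by closedness of the singular set. You can plug that argument in for this last case and keep the rest of your stratification approach intact.
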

\begin{proof}{\bf(i):} Take $ B^0\in S^g $ with $ \deg (\greatest (B^0)) = 1 $ and write $ \lambda_0 $ for this common root.
The unique polynomial $ b_0 $ corresponding to $ B^0 $ vanishes at $ \lambda_0 $. Furthermore locally nearby $ B^0 $ the subset $ R^g $ is the set of all $ B\in\Gr(2,P^{g+1}_\R)^\circ $, with a root of the corresponding $ b_0 $ on $ \lambda\in\mathbb{S}^1 $. For any basis $ b_1,b_2 $ of $ B^0 $, the values $ \dot{b}_1(\lambda_0) $ and $ \dot{b}_2(\lambda_0) $ can be chosen independently. Therefore there always exists a direction in $ T_{B^0}\Gr(2,P^{g+1}_\R)^\circ $ with non vanishing real part of $ \dot{\lambda}_0/\lambda_0 $. Hence by the Implicit Function Theorem, $ B^0 $ is a smooth point of $ R^g $ of co-dimension 1.

The points $ B\in\Gr(2,P^{g+1}_\R)^\circ $ with $ \deg\greatest(B)=1 $ are dense in $ S^g $ and any point in $ S^g $ is contained in the closure of such smooth points. The dimension of $ S^g $ at a singular point is defined as the limit of the dimension at nearby smooth points. Therefore at every $ B\in S^g $ we have $ \dim_B R^g = 2 g -1 $.

Suppose $ B\in S^g $ has $ \deg (\greatest(B)) >1 $, and furthermore assume that the roots of $ \greatest (B) $ are distinct. Then there are $ \deg (\greatest (B)) $ distinct tangent planes of $ R^g $ passing through $ B $, which we can realise in turn by deforming $ B $ so that one common zero (or complex conjugate part of common zeros) remains common whilst the others cease to give zeros of $ \greatest (B) $. Since the set of singular points is closed and every point with $ \deg (\greatest (B)) >1 $ is in the closure of such points satisfying the additional assumption that the roots of $ \greatest (B) $ are distinct, this shows that all points where the degree of $ \greatest (B) $ is not one are singular.

\noindent{\bf(ii):} Take $ B^0\in R^g\setminus S^g $ with $ \deg (\greatest (B^0)) = 2 $. As above we may take linearly independent $ b_1^0, b_2^0\in B^0 $ with simple roots at some $ \beta^0,\overline {\beta^0}^{- 1} $ and for nearby $ B $ define corresponding $ b_1, b_2\in B $ using the normalisation $ b_k (0) = b_k^0 (0) $. Writing $ \beta _k,\overline {\beta  _k}^{- 1} $ for the corresponding pair of zeros of $ b_k $, the map
\[
B\mapsto(\Re (\beta_1 -\beta_2),\Im (\beta_1 -\beta_2)
\]
is a smooth map from a neighbourhood of $ B^0 $ into $ \R^2 $ and it has rank 2 at $ B^0 $. Then again the Implicit Function Theorem and the density of smooth points yield that points of $ R^g\setminus S^g $ with $ \deg (\greatest (B)) = 2 $ are smooth and every point of $ R^g \setminus S^g $ has dimension $ 2 g -2 $. 

Whenever $ \deg (\greatest (B) ) >2 $ the analogous argument to that given in the first part shows that $ R^g $ is singular.
\end{proof}

\begin{proposition}
Suppose
\begin{align*}
\B:\mathcal{H}^g&\rightarrow \Gr(2,P^{g +1}_\R)^\circ&
a&\mapsto\B_a
\end{align*}
is an immersion at $ a\in\Sm^g $. Then $ a $ lies in the closure of $ \Pm^g $.
\end{proposition}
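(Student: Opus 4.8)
The plan is to use the immersion hypothesis to transport the ``Cartan umbrella'' structure of $R^g$, established in the preceding proposition, from $\Gr(2,P^{g+1}_\R)^\circ$ to $\mathcal{H}^g$, thereby verifying condition {\bf(A)} and then invoking Theorem~\ref{theorem:main}.

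First I would observe that both $\mathcal{H}^g$ and $\Gr(2,P^{g+1}_\R)^\circ$ have real dimension $2g$: the former because it is an open subset of $\C^g$, and the latter because, as noted above, its points are graphs of linear maps $\R^2\to\R^g$. Hence if $\B$ is an immersion at $a$ it is an immersion on a neighbourhood of $a$ (immersion is an open condition, by lower semicontinuity of the rank of $d\B$ together with the bound $\operatorname{rank} d\B\le 2g$), and being a map between equidimensional manifolds it is a local diffeomorphism there by the Inverse Function Theorem. Choose neighbourhoods $U\ni a$ and $V\ni\B_a$ with $\B|_U\colon U\to V$ a diffeomorphism.

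Next I would note that, directly from the definitions, $a\in\Rm^g$ if and only if $\B_a\in R^g$, $a\in\Sm^g_{\lambda_0}$ if and only if $\B_a\in S^g_{\lambda_0}$, and hence $a\in\Sm^g$ if and only if $\B_a\in S^g$; in particular $\B$ carries $U\cap\Rm^g$ diffeomorphically onto $V\cap R^g$ and $U\cap\Sm^g$ onto $V\cap S^g$. Since $a\in\Sm^g\subseteq\Rm^g$ we have $\B_a\in S^g\subseteq R^g$, so by part~(i) of the preceding proposition $\dim_{\B_a}R^g=2g-1$. Transporting this along the local diffeomorphism gives $\dim_a\Rm^g=2g-1$, which is exactly condition {\bf(A)} of Theorem~\ref{theorem:main}. (Equivalently one could transport the density statement from the proof of that proposition to see that $a$ lies in the closure of $\{\tilde a\in\mathcal{H}^g\mid\deg(\greatest(\B_{\tilde a}))=1\}$, which is condition {\bf(C)}.) Theorem~\ref{theorem:main} then yields that $a$ belongs to the closure of $\Pm^g$.

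There is no serious obstacle here: the proposition is a transcription of the preceding Grassmannian analysis through the local diffeomorphism $\B$, combined with the main theorem. The only points requiring care are the equidimensionality argument that upgrades ``immersion'' to ``local diffeomorphism'', and the verification that $\Rm^g$ and $\Sm^g$ are literally (not merely locally) the $\B$-preimages of $R^g$ and $S^g$, so that the pointwise dimension---a local invariant of the germ---transfers verbatim.
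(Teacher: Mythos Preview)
Your argument is correct and follows essentially the same route as the paper: use equidimensionality plus the immersion hypothesis to get a local diffeomorphism via the Inverse Function Theorem, transport the $(2g-1)$-dimensionality of $R^g$ at points of $S^g$ (from the preceding proposition) to $\Rm^g$ at $a$, and invoke Theorem~\ref{theorem:main}. The paper's proof is more terse, leaving implicit the appeal to the preceding proposition and the preimage identification $\Rm^g=\B^{-1}(R^g)$ that you spell out.
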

\begin{proof}
The domain and range of this map have the same dimension so by the inverse function theorem if $ \B $ is an immersion at $ a $, then the spaces are locally isomorphic. In particular, $ \Rm^g $ then has dimension $ 2 g -1 $ at $ a $ so by Theorem~\ref{theorem:main}, $ a $ lies in the closure of $ \Pm^g $.
\end{proof}

\bibliographystyle{alpha}

\bibliography{harmonic}

\end{document}